\author{}
\date{}
\newcommand{\captionfonts}{\footnotesize}
\long\def\@makecaption#1#2{%
  \vskip\abovecaptionskip
  \sbox\@tempboxa{{\captionfonts #1: #2}}%
  \ifdim \wd\@tempboxa >\hsize
    {\captionfonts #1: #2\par}
  \else
    \hbox to\hsize{\hfil\box\@tempboxa\hfil}%
  \fi
  \vskip\belowcaptionskip}
\newcommand{\nwc}{\newcommand}
\newtheorem{proposition}{Proposition}[section]
\newtheorem{lemma}[proposition]{Lemma}
\newtheorem{remark}[proposition]{Remark}
\newtheorem{theorem}[proposition]{Theorem}
\newtheorem{corollary}[proposition]{Corollary}
\newtheorem{definition}[proposition]{Definition}
\nwc{\R}{\mathbb R}
\nwc{\Z}{\mathbb Z}
\nwc{\N}{\mathbb N}
\newcommand{\ignore}[1]{}
\nwc{\eps}{\varepsilon}
\nwc{\re}{Re\,}
\nwc{\wto}{\rightharpoonup}
\nwc{\ds}{\displaystyle}
\newcommand {\bedis} {\begin{displaymath}}
\newcommand {\edis} {\end{displaymath}}
\newcommand{\newbeqna} {\renewcommand {\arraystretch} {2}
                        \begin {displaymath} \begin {array}{crcl}}
\newcommand{\neweqna}{\end{array} \end {displaymath}}
\newcommand{\fbeqna}{\renewcommand {\arraystretch} {1.3}
\begin {displaymath}\begin{array}{rcll}}
\newcommand{\feqna}{\end{array}\end{displaymath}}
\newcommand {\beqna} {\begin{eqnarray*}}
\newcommand {\eqna} {\end{eqnarray*}}
\newcommand {\beqn} {\begin{eqnarray}}
\newcommand {\eqn} {\end{eqnarray}}
\begin{document}
\title{Self-similar solutions with fat tails for Smoluchowski's coagulation equation with
locally bounded kernels}

\author{%
B. Niethammer%
\footnote{Institute of Applied Mathematics,
University of Bonn, Endenicher Allee 60, 53115 Bonn, Germany}
and
J. J. L. Vel\'{a}zquez\footnote{Institute of Applied Mathematics,
University of Bonn, Endenicher Allee 60, 53115 Bonn, Germany}
}
\maketitle

\begin{abstract}
The existence of self-similar solutions with fat tails for Smoluchowski's coagulation equation has so far only been established for the solvable
kernels and the  diagonal one. In this paper we prove the existence of such self-similar solutions for continuous kernels $K$ that are homogeneous
of degree $\gamma \in [0,1)$ and satisfy $K(x,y) \leq C (x^{\gamma} + y^{\gamma})$. More precisely, for any $\rho \in (\gamma,1)$ we establish
the existence of a continuous weak nonnegative self-similar profile with decay $x^{-(1{+}\rho)}$ as $x \to \infty$.

For the proof we consider the time-dependent problem in self-similar variables with the aim to use a variant of Tykonov's fixed point theorem 
to establish the existence of a stationary profile. This requires to identify a weakly compact subset that is invariant under the evolution.
In our case we define a set of nonnegative measures which encodes the desired decay behaviour in an integrated form. The main difficulty is 
to establish the invariance of the lower bound under the evolution. Our key idea is to choose as a test function in the time dependent problem 
the solution of the associated backward dual problem. 
\end{abstract}
{\bf AMS subject class:} 45K05;  82C05

{\bf Keywords:} Smoluchowski's coagulation equations,  self-similar solutions, fat tails

\section{Introduction}

\subsection{Smoluchowski's coagulation equation and self-similarity}

In this paper we investigate the existence of so-called fat-tail self-similar solutions for the classical  coagulation equation 
by Smoluchowski  \cite{Smolu16} 
that describes irreversible aggregation of clusters through binary collisions. If $f(\xi,t)$ denotes the density
of clusters of mass $\xi$ at time $t$, the evolution of $f$ is given by  
\begin{equation}\label{smolu1}
 \partial_t f (\xi,t) =\frac{1}{2} \int_0^{\xi} K(\xi-\eta,\eta) f(\xi{-}\eta,t) f(\eta,t)\,d\eta - f(\xi,t) \int_0^{\infty}
K(\xi,\eta)f(\eta,t)\,d\eta\,,
\end{equation}
where the rate kernel $K$ describes the  rate of coagulation of  clusters of size $\xi$ and $\eta$. 
This model is used in a wide variety of applications, most notably in the kinetics of polymerization and aerosol physics, but also
in astrophysics and mathematical biology, for example. We refer to \cite{Aldous99,Drake72,LauMisch04} for
further background on applications of \eqref{smolu1} and its mathematics. 
 
In the following we consider homogeneous kernels with degree $\gamma \in [0,1)$. It is well-known 
for a large class of kernels that in this case  for data with
finite first moment, the model \eqref{smolu1} is well-posed and preserves the first moment for all times. Well-posedness of the model
for data with possibly infinite first moment but finite $\gamma$-th moment 
has also been established for a range of kernels \cite{FouLau06b}. 

A fundamental issue in the theory of coagulation is the so-called scaling hypothesis that states that for homogeneous kernels solutions approach
a unique self-similar profile for large times. Despite a significant range of results based on formal asymptotics (see in particular \cite{Leyvraz03,vanDoErnst88})
mathematically rigorous results supporting this hypothesis are still rare except for the special case of solvable kernels,
that is $K(x,y)=2, K(x,y)=x+y$ and $K(x,y)=xy$. 
Self-similar solutions for kernels of homogeneity $\gamma<1$ are of the form
\begin{equation}\label{ss1}
 f(\xi,t) = \frac{1}{t^{\alpha}} g\Big( \frac{\xi}{t^{\beta}}\Big)\,, \qquad \alpha = 1+(1{+}\gamma) \beta\,,
\end{equation}
where the self-similar profile  $g$ solves
\begin{equation}\label{ss2}
 -(1+(1{+}\gamma)\beta) g - \beta x g'(x) =\frac{1}{2} \int_0^{x} K(x,y) g(x{-}y) g(y)\,d\eta - g(x) \int_0^{\infty}
K(x,y)g(y,t)\,d\eta\,.
\end{equation}
Since for some kernels one cannot necessarily expect that the integrals on the right-hand side are finite, it is convenient to rewrite
the equation. In fact, multiplying the equation by $x$ and rearranging,  a weak formulation of \eqref{ss2} is that $g$ solves
\begin{equation}\label{ss3}
 \beta \partial_x ( x^2 g(x) )= \partial_x \Big[\int_0^x \int_{x{-}y}^{\infty} y K(y,z) g(z) g(y)\,dz\,dy \Big] + ((1{-}\gamma) \beta -1 )x g(x)\,
\end{equation}
in a distributional sense. 
If one in addition requires that the solution has finite first moment, then this  also fixes $\beta = 1/(1{-}\gamma)$ and in this case the second
term on the right hand side of \eqref{ss3} vanishes.

Let us now first describe what is known about self-similar solutions and the scaling hypothesis for the 
constant kernel which is the only solvable one with homogeneity 
smaller than 1. In this case it is easily checked that there is an explicit self-similar solution with finite mass,
given by the self-similar profile $g(x)=e^{-x}$. Convergence to this self-similar solution has been established under some assumptions on the initial
data in several papers \cite{DaCosta96,DMR00,KreerPen94,LauMisch05}.  A complete characterization of its domain of attraction has more recently 
been given  in 
\cite{MePe04}. Moreover, it is also proved in \cite{MePe04}  that there exists  a family of self-similar
solutions with infinite mass, so-called self-similar solutions with fat tails. More precisely, it was established that for any $\rho \in (0,1)$
there exists a self-similar profile with decay $x^{-(1+\rho)}$. Furthermore it is shown that a solution to the coagulation equation converges to the self-similar solution
with decay behaviour $x^{-(1+\rho)}$ if and only if the integrated mass distribution is regularly varying with exponent $1-\rho$. The proof is simple and elegant,
but relies on the use of the Laplace transform and hence the methods are not applicable to non-solvable kernels.

In fact, for non-solvable kernels significantly less is known about the scaling hypothesis. Only rather recently results on the existence of 
self-similar profiles have become available \cite{EMR05,FouLau05} and certain properties of these profiles have been established 
\cite{CanMisch11,EsMisch06,FouLau06a}. However,
until now  their domains of attraction under the evolution \eqref{smolu1} are completely unknown and related to this, uniqueness of self-similar
profiles (in a certain class, e.g. with finite mass) is still an open question. 

Furthermore, also the existence of self-similar solutions with fat tails has not been established for non-solvable kernels apart from the
diagonal one  \cite{NV11a}.
 It is the goal of the present paper to show the existence of self-similar profiles with fat tails for kernels of 
homogeneity $\gamma$ that are  bounded by $C(x^{\gamma}+y^{\gamma})$. As we point out in Remark \ref{R.remark}
this covers a wide range of kernels considered in the literature, but does not address some other kernels of interest, such as Smoluchowski's
classical kernel $(x^{1/3} + y^{1/3})(x^{-1/3} + y^{-1/3})$.

\subsection{The main result}

In order to present our main results and the ideas of the proof we go over to the  mass density function
$h\left(  x,t\right)  =xg\left(  x,t\right)  $ and introduce the parameter $\rho=\gamma+\frac{1}{\beta}$. Then, 
after rescaling,  the time dependent version of equation \eqref{ss3} becomes
\begin{equation}
\partial_{t}h+\partial_{x}\left[  \int_{0}^{x}\int_{x-y}^{\infty}%
\frac{K\left(  y,z\right)  }{z}h\left(  z\right)  h\left(  y\right)
\,dz\,dy\right]  -\beta\left[  \partial_{x}\left(  xh\right)  +\left(  \rho{-}1\right)
h\right]  =0\,, \label{A2}%
\end{equation}
with initial data
\begin{equation}\label{A3}
 h(x,0)=h_0(x)\,.
\end{equation}

Our precise assumptions on the kernel $K$ are as follows. We assume that $K$ satisfies
\begin{equation}\label{Ass1a}
 K \in C^0 ((0,\infty) \times (0,\infty))\,, \qquad K(x,y)=K(y,x)\geq 0 \qquad \mbox{ for all } x,y \in (0,\infty) \,,
\end{equation}
is homogeneous of degree $\gamma \in [0,1)$, that is
\begin{equation}\label{Ass1b}
 K(ax,ay)=a^{\gamma} K(x,y) \qquad \mbox{ for all } x,y \in (0,\infty)\,,
\end{equation}
 and satisfies the growth condition
\begin{equation}\label{Ass1}
K(x,y)  \leq C\left( x^{\gamma} +  y^{\gamma} \right)\,\qquad \mbox{ for all } x,y \in (0,\infty)\,.
\end{equation}

Our main result can now be formulated as follows

\begin{theorem}\label{T.main}
 Given $\gamma \in [0,1)$ and a kernel $K$ that satisfies assumptions \eqref{Ass1a}-\eqref{Ass1},  then for any $\rho \in (\gamma,1)$
 there exists
a weak stationary solution $h$ to \eqref{A2}. This solution is nonnegative, continuous and satisfies
\[
h(x) \sim (1{-}\rho) x^{-\rho} \, \mbox{ as } x \to \infty.
\]
\end{theorem}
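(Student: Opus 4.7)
The plan is to produce the self-similar profile as a stationary solution of the time-dependent equation \eqref{A2} via a fixed-point argument in self-similar variables, following the strategy sketched in the abstract. Since $K$ may degenerate at the origin and the nonlinearity is quadratic, I would first regularize by replacing $K$ by a family $K_\eps$ of continuous kernels that are bounded on $(0,\infty)^2$ and cut off near the axes and at infinity. For each $K_\eps$, well-posedness of the Cauchy problem \eqref{A2}--\eqref{A3} in a class of nonnegative Radon measures is standard, so the time-$T$ evolution map $S_T^\eps$ is well-defined.

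The invariant set $\mathcal{X}$ is built from the cumulative mass $M(R) := \int_0^R h(x)\,dx$: it consists of nonnegative measures $h$ on $(0,\infty)$ satisfying two-sided bounds $c_1 R^{1{-}\rho} \leq M(R) \leq c_2 R^{1{-}\rho}$ for $R$ large (compatible with the target $h(x) \sim (1{-}\rho) x^{-\rho}$), together with analogous small-$R$ bounds and moment controls forcing tightness. These constraints are linear in $h$, so $\mathcal{X}$ is convex, and with appropriate weightings it is weakly-$\ast$ compact.

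The crux is to verify $S_T^\eps(\mathcal{X}) \subset \mathcal{X}$. Testing \eqref{A2} against a smoothed indicator of $(0,R]$ controls the upper bound on $M(R,\cdot)$: the coagulation flux is estimated using \eqref{Ass1} together with the upper bound of $\mathcal{X}$, and the drift term contributes with the right sign. For the lower bound I would implement the backward-dual idea from the abstract. Given a nonnegative test function $\psi$ supported near $R$, define $\varphi(x,t)$ as the solution of the linear backward equation formally adjoint to \eqref{A2} linearized about the evolving $h(\cdot,t)$, with terminal data $\varphi(\cdot,T) = \psi$. Since $h \geq 0$ and $K \geq 0$, a maximum-principle argument on the dual yields $\varphi \geq 0$ on $[0,T]$, and the weak formulation produces the exact identity
\[
\int_0^\infty h(x,T)\,\psi(x)\,dx \;=\; \int_0^\infty h_0(x)\,\varphi(x,0)\,dx.
\]
Propagating the lower bound then reduces to a quantitative lower bound on $\varphi(\cdot,0)$ compatible with the profile $x^{-\rho}$.

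Once invariance and weak continuity of $S_T^\eps$ are in place, Tychonoff's fixed-point theorem produces a stationary $h_\eps \in \mathcal{X}$ for the regularized equation. A compactness argument along $\eps \to 0$, using the uniform controls encoded in $\mathcal{X}$, yields a stationary weak solution $h$ of \eqref{A2} for the original kernel satisfying the same two-sided integrated decay. Continuity of $h$ follows from the stationary equation, and the sharp pointwise asymptotic $h(x) \sim (1{-}\rho) x^{-\rho}$ is obtained by matching in the large-$x$ limit, where the drift $\beta x \partial_x h + \beta(\rho{-}1)h$ dominates the nonlocal flux by the integrated bounds and thus fixes the exact constant. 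The main technical obstacle is the lower-bound invariance: obtaining a quantitative, power-law lower bound on $\varphi(\cdot,0)$ demands a careful analysis of the backward nonlocal flow, in which large- and small-$x$ scales are tightly coupled through the coagulation dual, and this is where most of the work will go.
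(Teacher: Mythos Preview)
Your overall strategy matches the paper's closely: regularize, build an invariant compact convex set via integrated bounds on $M(R)=\int_0^R h$, use the backward dual to propagate the lower bound, apply a Tychonoff-type fixed point, and pass to the limit. You have also correctly identified the lower-bound invariance as the crux. However, two concrete points are missing and one of them would cause your argument to fail as written.

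First, a pure two-sided power-law constraint $c_1 R^{1-\rho}\le M(R)\le c_2 R^{1-\rho}$ is \emph{not} invariant under the evolution, no matter how $c_1<c_2$ are chosen. The reason is structural: for the linear transport part of \eqref{A2} alone, the profile $R^{1-\rho}$ is exactly stationary (the solution is $h(x,t)=e^{\rho\beta t}h_0(xe^{\beta t})$, which preserves any pure power), so it produces no gain, while the coagulation term strictly removes mass from $[0,R]$ and hence strictly decreases $M(R)$. The paper's key observation is to use instead the lower bound $M(R)\ge R^{1-\rho}\bigl(1-(R_0/R)^\delta\bigr)_+$ with $0<\delta<\rho-\gamma$: under the linear transport this bound \emph{improves} by an amount $\sim \delta\beta t\,(R_0/R)^\delta R^{1-\rho}$, and this strict gain is what absorbs the $O(t\,R^{1-\rho-(\rho-\gamma)})$ loss from coagulation (the exponent comparison $\delta<\rho-\gamma$ is exactly what makes this work for large $R$). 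Without this corrector your invariant set is not invariant.

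Second, your dual argument stops at ``a maximum-principle argument yields $\varphi\ge 0$'', but nonnegativity alone gives no lower bound on $\int h_0\,\varphi(\cdot,0)$. The paper's device is to replace the kernel $K_\lambda(x,z)h(z,s)/z$ in the dual by the explicit majorant $M\,z^{-(1+\rho-\gamma)}$ (legitimate since $\psi(x+z)-\psi(x)\le 0$ for decreasing $\psi$) and then solve the resulting comparison equation exactly: it admits a self-similar subsolution $W\bigl((R-X)/(M\tau)^{1/(\rho-\gamma)}\bigr)$ with $W$ monotone, $W(\infty)=1$, and $W'(Y)\sim C Y^{-(1+\rho-\gamma)}$. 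This explicit profile is what turns the duality identity into a quantitative lower bound on $M(R,t)$ sharp enough to close against the corrector form above. Everything else in your outline (regularization, upper bound, compactness, passage to the limit, continuity via bootstrap on the stationary equation, and the large-$x$ asymptotics by showing the flux term is $o(R^{1-\rho})$) lines up with the paper.
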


\begin{remark}\label{R.remark}
{\em 
 While our result covers a wide range of kernels, in particular for example the product kernel $K(x,y)=(xy)^{\gamma/2}$ and the general sum kernel
$K(x,y) = x^{\alpha} y^{\gamma-\alpha} + y^{\alpha} x^{\gamma-\alpha}$ with $\alpha \geq 0$, it does not apply to singular kernels, such as
Smolochowski's
kernel $K(x,y) = (x^{1/3} + y^{1/3}) ( x^{-1/3} + y^{-1/3})$. 
The main reason is that for the type of kernels considered in this paper our global estimate \eqref{B1a} suffices to  prove
 that the nonlinear integral terms in \eqref{A2} are well defined, but this is not  sufficient for singular kernels.
However, we expect that we can use regularizing properties of singular kernels to show that $h$ is small as $x \to 0$. Then  it is possible
to extend  our strategy to cover also this range of kernels with some additional technical effort. This analysis will be the subject of 
future work.}
\end{remark}

\begin{remark}\label{R.remark2}
{\em We prove under rather minimal assumptions on the kernel $K$ the existence of a weak continuous solution. One would expect that for a kernel
that is locally smooth, the self-similar solution is locally smooth as well. The proof of such a property is however not, as one might first 
expect, a straightforward bootstrap
argument  due to the possibly singular behavior of solutions near $x=0$ and we do not further explore this issue in this paper.

A related issue is the behaviour of self-similar profiles as $x \to 0$.
For mass-conserving self-similar solutions for
product type kernels with $\gamma \in (0,1)$ it has for example
 been rigorously established in \cite{NV11b}, that solutions behave as $h(x) \sim
c x^{-\gamma}$ as $x \to 0$ (see also \cite{EsMisch06,FouLau06a} for related results on other type of kernels). However, without any further assumptions
on the kernel, such as certain lower bounds, 
 we cannot expect a universal behaviour as $x \to 0$. This already follows from the trivial observation
that our result also applies to the case $K \equiv 0$ for which $h(x)=(1{-}\rho)x^{-\rho}$. 
}
\end{remark}

\subsection{Strategy of proof}

Our strategy to find a stationary solution to \eqref{A2} will in principle be the following. We consider the corresponding evolution problem
and prove that it preserves a convex set that is compact in the weak topology and contains functions with the expected decay behaviour. 
This will allow us to prove the existence of a fixed point by a variant of Tykonov's fixed point theorem. 
However, it is not so easy to prove well-posedness directly for \eqref{A2}-\eqref{A3} since we need to consider the well-posedness of the problem
in a space of functions that are singular at the origin. Uniqueness and continuous dependence is
difficult to prove for (\ref{A2})-(\ref{A3}) without careful asymptotic
estimates for the solutions near the origin.
Instead, we will consider a family of regularized problems and prove that self-similar solutions
for this regularized problem exist and satisfy uniform estimates that allow us to pass to the limit in the corresponding equation. 

We now describe  the regularization procedure in more detail. We consider  a family of  problems
\begin{align}
\partial_{t}h+\partial_{x}\left[  \int_{0}^{x}\int_{x-y}^{\infty}%
\frac{K_{\lambda}\left(  y,z\right)  }{z}h\left(  z\right)  h\left(
y\right) \,dz\,dy \right]  -\beta\left[  \partial_{x}\left(  xh\right)  +\left(
\rho{-}1\right)  h\right]   &  =0\ ,\ \lambda>0\,,\label{A2a}\\
h\left(  x,0\right)   &  =h_{0}\left(  x\right)\,,  \label{A3b}%
\end{align}
where we  define $K_{\lambda}$  by means of
\begin{equation}
K_{\lambda}\left(  y,z\right)  =K\left(  y,z\right)  \zeta\left(  \frac
{y}{\lambda}\right)  \zeta\left(  \frac{z}{\lambda}\right)  \zeta\left(
\frac{y}{\lambda\left(  y+z\right)  }\right)  \zeta\left(  \frac{z}%
{\lambda\left(  y+z\right)  }\right)  \  ,\ \ 0<\lambda<\frac{1}{2}\,,
\label{D1}%
\end{equation}
where $\zeta\in C^{\infty}\left[  0,\infty\right)  $ is a cutoff function
satisfying $\zeta^{\prime}\geq0,\ \zeta\left(  s\right)  =0$ if $s\leq\frac
{1}{2},\ \zeta\left(  s\right)  =1$ if $s\geq1.$

We will obtain existence and uniqueness of solutions to  the problem (\ref{A2a})-(\ref{A3b}) using standard
fixed point arguments (cf. Proposition \ref{WPR} and Lemma \ref{L2}) and prove continuity of the corresponding
semi-group in the weak topology (Proposition \ref{weakcontin}). This can be done in suitable subsets of certain Banach spaces.
 More precisely,
 we consider the metric space $\mathcal{X}_{\rho}$  of nonnegative Radon
measures, which we denote by some abuse of notation by  $h dx\in\mathcal{M}^{+}\left(  \left[  0,\infty\right)  \right)  $,
satisfying the condition
\begin{equation}
\sup_{R\geq0}\frac{\int_0^R h\left(  x\right)  dx}%
{R^{1{-}\rho}}<\infty\,.\label{C1}%
\end{equation}
Since $h dx$ might contain Dirac masses away from the origin, we need to make the notation $\int_0^R h(x)\,dx$ precise. Here and throughout the paper
we understand this integral in the sense of $\int_0^R h(x)\,dx = \int_{[0,R]}h(x)\,dx$.

We give $\mathcal{X}_{\rho}$ the structure of a metric space by means of
\begin{equation}
\left\Vert h\right\Vert _{\mathcal{X}_{\rho}}    =\sup_{R\geq0}\frac
{\big | \int_0^R h\left(  x\right)  dx \big | }{R^{1{-}\rho}}\qquad \mbox{ and } \qquad 
\operatorname*{dist}\left(  h_{1},h_{2}\right)     =\left\Vert h_{1}%
-h_{2}\right\Vert _{\mathcal{X}_{\rho}}\,. \label{C1b}%
\end{equation}

Given any $T>0$ we can define a metric space $C\left(  \left[  0,T\right]
;\mathcal{X}_{\rho}\right)  $ via
\begin{equation}
\left\Vert h\right\Vert =\sup_{0\leq t\leq T}\left\Vert h\right\Vert
_{\mathcal{X}_{\rho}}\ 
 ,\ \qquad \operatorname*{dist}\left(  h_{1},h_{2}\right)
=\left\Vert h_{1}-h_{2}\right\Vert\,. \label{C7}%
\end{equation}
The set that will be shown to be invariant under the evolution induced by
(\ref{A2a})-(\ref{A3b}) will be the set $\mathcal{Y}$  of measures $h\in\mathcal{M}%
^{+}\left(  \left[  0,\infty\right)  \right)  $ satisfying
\begin{align}
\int_0^R h\left(  x\right)  dx  &  \leq 
R^{1{-}\rho
}\, ,\qquad R\geq0\label{B1a}\\
\int_0^R h\left(  x\right)  dx  &  \geq R^{1{-}\rho}\left(
1-\frac{R_{0}^{\delta}}{R^{\delta}}\right)  _{+}\,, \qquad R\geq
0\,,\label{B2a}%
\end{align}
for a sufficiently large $R_0$ and a sufficiently small $\delta>0$.
It is straightforward to see that this set is convex and compact in the  weak topology.

The heart of our analysis is the proof of the  invariance of \eqref{B1a} and \eqref{B2a} under the evolution 
(\ref{A2a})-(\ref{A3b}). The upper bound \eqref{B1a} can be proved by analyzing a simple differential inequality 
that is satisfied
by $\int_{\left[  0,R\right]  }h\left(  x\right)  dx$ (cf. Proposition \ref{P.upperbound}). The proof of the invariance of 
(\ref{B2a}) is 
more delicate and is contained in Sections \ref{S.massflux}-\ref{S.lowerbound}. 

To explain the main idea it is useful to comment first on the particular choice \eqref{B2a} for a lower bound. If we consider \eqref{A2a}-\eqref{A3b} with 
$K_{\lambda} \equiv 0$ we obtain a linear transport equation that has the explicit solution $h(x,t) =e^{\rho \beta t} h_0\big( x e^{\beta t}\big)$.
If we take data $h_0(x)= x^{-\rho} \big( 1-Cx^{-\delta}\big)$ we find
\[
 h(x,t) = x^{-\rho} \Big( 1 - \frac{C e^{-\delta \beta t}}{x^{\delta}}\Big) \sim x^{-\rho} \Big( 1- \frac{C}{x^{\delta}}\Big) + C \delta \beta
t x^{-(\rho+\delta)}
\]
and thus obtain an improved lower bound for positive times. Our task is then to show that the additional error terms induced by the nonlinear
coagulation term can be absorbed into the positive term if $\delta$ is sufficiently small. 

In order to prove the invariance of \eqref{B2a} we now write \eqref{A2a}-\eqref{A3b} as 
\begin{align}
\partial_{t}h\left(  x,t\right) & +h\left(  x,t\right)  \int_{0}^{\infty}\frac{K_{\lambda
}\left(  x,z\right)  }{z}h\left(  z,t\right)\,dz
\nonumber\\
&  -\int_{0}^{x}\frac{K_{\lambda}\left(  y,x-y\right)  }{\left(  x-y\right)
}h\left(  x-y,t\right)  h\left(  y,t\right)\,dy -\beta\left[  x\partial_{x}h+\rho h\right]     =0\label{D2}\\
h\left(  0,\cdot\right)   &  =h_{0}\,. \label{D3}%
\end{align}
It is easy to see, by  testing with a function $\psi=\psi(x,t)$, that one obtains 
\[
 \int h(x,t) \psi(x,t)\,dx = \int h_0(x) \psi(x,0)\,dx\,
\]
if $\psi$ solves the associated dual problem 
\begin{equation}
-\partial_{s}\psi\left(  x,s\right)  -\int_{0}^{\infty}\frac{K_{\lambda
}\left(  x,z\right)  }{z}h\left(  z,s\right)  \left[  \psi\left(
x{+}z,s\right)  -\psi\left(  x,s\right)  \right]\,dz  +\beta x\partial_{x}%
\psi\left(  x,s\right)  -\beta\left(  \rho{-}1\right)  \psi\left(  x,s\right)
=0 \label{S7a}
\end{equation}
with 
\begin{equation}\label{S7b}
 \psi(x,t)= \chi_{[0,R]}(x)\,.
\end{equation}
Thus, in order to estimate $\int_0^R h(x,t)\,dx$ we need to estimate $\psi(x,0)$ from below. It is worth remarking here that also our proof
of weak continuity of the semi-group relies on using the solution of the dual problem as a test function. 
With regard to an estimate of $\psi(x,0)$ as above, it turns out that in the case of kernels satisfying 
\eqref{Ass1} and measures $h$ satisfying \eqref{B1a} and \eqref{B2a} we can construct a subsolution for $\psi$ by replacing the term
$\frac{K_{\lambda
}\left(  x,z\right)  }{z}h\left(  z,s\right) $
by a suitable power law (Lemma \ref{L.comparison}).
The equation for the subsolution has an explicit self-similar solution (Proposition \ref{P.W}).  Finally, it remains to work out that this subsolution is sufficiently good to show that \eqref{B2a} is preserved under the
evolution. This is done in Section \ref{S.lowerbound}, see Proposition \ref{P.lowerbound}. 

A variant of Tykonov's fixed point theorem now guarantees the existence of a stationary solution $h_{\lambda}$ to \eqref{A2a}. 
The invariance of $\mathcal{Y}$ together with its weak compactness allows us to find a subsequence that converges weakly to a measure $h$. 
Since $h \in \mathcal{Y}$ it is not difficult to show that $h$ is also a weak stationary solution to \eqref{A2} (cf. Proposition \ref{P.main}).
In the last two subsections we then show that this weak solution is in fact also continuous on $(0,\infty)$ (cf. Lemma \ref{L.cont}) and has the
desired decay behaviour (cf. Lemma \ref{h.tail}).

\section{Analysis of the regularized problems (\ref{A2a})-(\ref{A3b})}\label{S.regularized}

\subsection{Well posedness of the regularized problem.}

As a first step we  study the regularized equation \eqref{D2}-\eqref{D3} for which well-posedness can be easily proved.
 This will allow us to
define a family of evolution semigroups $\left\{  S_{\lambda}\left(  t\right)
\right\}  _{\lambda>0}.$ 

We introduce the change of variables
\begin{equation}
x=Xe^{-\beta t}\ \ ,\ \ h\left(  x,t\right)  =H\left(  X,t\right)
 \label{D6}%
\end{equation}
such that  (\ref{D2})-(\ref{D3}) becomes 
\begin{align}
 \partial_{t}H\left(  X,t\right) & +H\left(  X,t\right)  \int_{0}^{\infty
}\frac{K_{\lambda}\left(  Xe^{-\beta t},Ze^{-\beta t}\right)  }{Z}H\left(
Z,t\right)\,dZ  \nonumber\\
&-\int_{0}^{X}\frac{K_{\lambda}\left(  Ye^{-\beta t},\left(
X{-}Y\right)  e^{-\beta t}\right)  }{\left(  X{-}Y\right)  }H\left(  X{-}Y,t\right)
H\left(  Y,t\right) \,dY 
-\beta\rho H\left(  X,t\right)     =0\label{D4}\\
H\left(  0,\cdot\right)   &  =h_{0}\ \ \label{D5}%
\end{align}
This can be rewritten as 
\begin{equation}
\partial_{t}H\left(  X,t\right)  +\mathcal{A}\left[  H\right]  \left(
X,t\right)  H\left(  X,t\right)  -\mathcal{Q}\left[  H\right]  \left(
X,t\right)  =0\ \label{D7a}\,,
\end{equation}
with 
\begin{align}
\mathcal{A}\left[  H\right]  \left(  X,t\right)   &  =\int_{0}^{\infty}%
\frac{K_{\lambda}\left(  Xe^{-\beta t},Ze^{-\beta t}\right)  }{Z}H\left(
Z,t\right)  dZ-\beta\rho\label{D7b}\,,\\
\mathcal{Q}\left[  H\right]  \left(  X,t\right)   &  =\int_{0}^{X}%
\frac{K_{\lambda}\left(  Ye^{-\beta t},\left(  X{-}Y\right)  e^{-\beta
t}\right)  }{\left(  X{-}Y\right)  }H\left(  X{-}Y,t\right)  H\left(  Y,t\right)\,dY\,.
\label{D7c}%
\end{align}
This particular reformulation is convenient in order to preserve the
nonnegativity of $H$ in fixed point arguments.

\begin{definition}
\label{def1}We say that $H \in C\left(  \left[  0,T\right]  ;\mathcal{X}_{\rho}\right)  $
is a mild solution
of (\ref{D4})-(\ref{D5}) if it satisfies for every $t \in [0,T]$ the equation
\begin{equation}
\begin{split}
H\left(  X,t\right) 
= & h_{0}\left(  X\right)  \exp\left(  -\int_{0}^{t}\mathcal{A}\left[  H\right]
\left(  X,s\right)  ds\right) \\
& \quad  +\int_{0}^{t}\exp\left(  -\int_{s}%
^{t}\mathcal{A}\left[  H\right]  \left(  X,\xi\right)  d\xi\right)
\mathcal{Q}\left[  H\right]  \left(  X,s\right)\,ds =: \mathcal{T}[H](X,t)   \label{D8a}%
\end{split}
\end{equation}
in the sense of measures. 
\end{definition}
Note that  if $H(\cdot,t)$ is a measure and $K$ is continuous,
then $\mathcal{A}$ is continuous in $X$. Furthermore, $Q$ is a weighted convolution of measures and hence also a measure.
Thus,  $\mathcal{T}$ is well-defined.

\begin{lemma}
\label{L.WPR}
 For any $\lambda>0$ and any $h_{0}\in\mathcal{X}_{\rho}$
there exists a time $T>0$ and a unique mild solution of (\ref{D4})-(\ref{D5}) in $[0,T]$.
\end{lemma}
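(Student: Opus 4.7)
The plan is to set up the Duhamel-type formulation $H=\mathcal{T}[H]$ in \eqref{D8a} as a fixed point problem in a closed ball
\[
\mathcal{B}_{M,T} = \bigl\{ H \in C([0,T];\mathcal{X}_{\rho}) : H(\cdot,t)\geq 0,\; \|H(\cdot,t)\|_{\mathcal{X}_{\rho}} \leq M \text{ for all } t\in[0,T]\bigr\},
\]
with $M:=2\|h_{0}\|_{\mathcal{X}_{\rho}}$, and apply Banach's fixed point theorem for $T=T(\lambda,M)$ small enough. Nonnegativity is preserved by the form \eqref{D7a}--\eqref{D7c}: $\mathcal{Q}[H]$ is a positive measure whenever $H$ is, and the exponential factors in \eqref{D8a} are positive, so $\mathcal{T}$ maps nonnegative measures to nonnegative measures.

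The key quantitative step is to exploit the cutoffs in \eqref{D1}. The four factors of $\zeta$ force the effective support of $K_{\lambda}(y,z)$ to satisfy $y,z\geq \lambda/2$ and $\lambda/(2-\lambda)\leq y/z\leq (2-\lambda)/\lambda$; hence in the variables of \eqref{D4} the inner integrals are nontrivial only when $X,Z$ (resp.\ $Y,X-Y$) are bounded below by $\tfrac{\lambda}{2}e^{\beta t}$ and stay comparable up to constants depending only on $\lambda$. Combined with the growth \eqref{Ass1} and the homogeneity \eqref{Ass1b}, this yields the pointwise bound
\[
\frac{K_{\lambda}(Xe^{-\beta t}, Ze^{-\beta t})}{Z} \;\leq\; C_{\lambda}(T)\, Z^{\gamma-1}\mathbf{1}_{Z\in[c_{1}X, c_{2}X]}
\]
on $[0,T]$. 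Integrating against $H$ and using \eqref{C1b} together with a Stieltjes integration by parts gives $\|\mathcal{A}[H](\cdot,t)\|_{L^{\infty}}\leq C_{\lambda}(T)\|H\|_{\mathcal{X}_{\rho}} + \beta\rho$, and an analogous Fubini computation followed by the substitution $Z=X-Y$ yields $\|\mathcal{Q}[H](\cdot,t)\|_{\mathcal{X}_{\rho}}\leq C_{\lambda}(T)\|H\|_{\mathcal{X}_{\rho}}^{2}$. Since $\mathcal{A}$ is bounded in $L^{\infty}$, the exponentials in \eqref{D8a} lie in $[e^{-C_{\lambda}t}, e^{C_{\lambda}t}]$ uniformly in $X$, so $\|\mathcal{T}[H](\cdot,t)\|_{\mathcal{X}_{\rho}}\leq e^{C_{\lambda}t}\|h_{0}\|_{\mathcal{X}_{\rho}} + t\,e^{C_{\lambda}t} C_{\lambda}M^{2}$, which is $\leq M$ provided $T$ is chosen small enough depending on $\lambda$ and $M$.

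For the contraction bound, the same type of estimates give $\|\mathcal{A}[H_{1}]-\mathcal{A}[H_{2}]\|_{L^{\infty}}\leq C_{\lambda}\|H_{1}-H_{2}\|_{\mathcal{X}_{\rho}}$ and $\|\mathcal{Q}[H_{1}]-\mathcal{Q}[H_{2}]\|_{\mathcal{X}_{\rho}}\leq C_{\lambda}M\|H_{1}-H_{2}\|_{\mathcal{X}_{\rho}}$ by writing $\mathcal{Q}[H_{1}]-\mathcal{Q}[H_{2}]$ as a sum of two terms linear in $H_{1}-H_{2}$. Combining these, together with $|e^{-a}-e^{-b}|\leq e^{\max(|a|,|b|)}|a-b|$ applied to the exponential factors in \eqref{D8a}, produces $\|\mathcal{T}[H_{1}]-\mathcal{T}[H_{2}]\|_{C([0,T];\mathcal{X}_{\rho})} \leq C_{\lambda}(M,T)\,T\,\|H_{1}-H_{2}\|$, which is a strict contraction once $T$ is reduced further. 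Time continuity in $\mathcal{X}_{\rho}$ of any iterate follows directly from \eqref{D8a} since $\mathcal{Q}[H]$ is uniformly bounded in $\mathcal{X}_{\rho}$ on $[0,T]$. The main technical obstacle is really the careful Stieltjes-integration bookkeeping that turns the measure-valued convolution $\mathcal{Q}[H]$ into a quadratic estimate in $\|\cdot\|_{\mathcal{X}_{\rho}}$; once the support and comparability constraints imposed by the cutoffs are used, this is elementary, and everything else is a standard Banach contraction argument.
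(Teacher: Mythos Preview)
Your proposal is correct and follows essentially the same route as the paper: set up the Duhamel map $\mathcal{T}$ on a ball of radius $2\|h_0\|_{\mathcal{X}_\rho}$ in $C([0,T];\mathcal{X}_\rho)$, exploit the cutoffs in $K_\lambda$ to get boundedness of $\mathcal{A}[H]$ and an $\mathcal{X}_\rho$-bound on $\mathcal{Q}[H]$, then apply Banach's fixed point theorem for small $T=T(\lambda)$. The only noteworthy technical difference is that the paper controls integrals of the form $\int_x^\infty z^{-\alpha}H(z)\,dz$ via a dyadic decomposition (its estimate \eqref{dyadic}), whereas you use a Stieltjes integration-by-parts against the distribution function $\int_0^R H$; both are standard and equivalent here, and your packaging of the two support constraints (the absolute lower bound and the ratio constraint on $Z/X$) into a single indicator is in fact slightly cleaner than the paper's case split $X\leq 1$ versus $X\geq 1$.
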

\begin{proof}
We are going to prove the well-posedness  of (\ref{D8a}) in the metric space $C\left(
\left[  0,T\right]  ;\mathcal{X}_{\rho}\right)  $ using a fixed point argument. In the following it will be crucial
that $K_{\lambda} (X,Z)=0$ for small $X,Z$. As a consequence all the constants will depend on $\lambda$.

Our goal is to prove that  the operator $\mathcal{T}  $ maps
 the subset
\[
\mathcal{U}=\left\{  h\in C\left(  \left[  0,T\right]  ;\mathcal{X}_{\rho
}\right)  :\left\Vert h\right\Vert \leq2\left\Vert h_{0}\right\Vert
_{\mathcal{X}_{\rho}}\right\} \subset C\left(  \left[  0,T\right]  ;\mathcal{X}_{\rho}\right)
\]
into itself and is strongly contractive if $T=T(\lambda)$ is sufficiently small.

In the following we will often use that $\|h\|_{\mathcal{X}_{\rho}}\leq C_0$ implies that
\begin{equation}\label{dyadic}
 \int_{x}^{\infty} \frac{h(z)}{z^{\alpha}}\,dz \leq C_0 C(\alpha) x^{1-\rho-\alpha} \qquad \mbox{ if } \alpha >1-\rho.
\end{equation}
 In fact, using a dyadic decomposition, we find
\[
 \begin{split}
  \int_x^{\infty} \frac{h(z)}{z^{\alpha}}\,dz & = \sum_{n=0}^{\infty} \int_{2^nx}^{2^{n+1}x} 
\frac{h(z)}{z^{\alpha}}\,dz\\
&\leq \sum_{n=0}^{\infty}  \big( 2^n x\big)^{-\alpha} \int_{2^nx}^{2^{n+1}x}  h(z)\,dz\\
& \leq C_0  \sum_{n=0}^{\infty}  \big( 2^n x\big)^{-\alpha} \big(2^{n+1}x\big)^{1-\rho} = 2 C_0 x^{1-\rho-\alpha} 
\sum_{n=1}^{\infty} 2^{n(1-\rho-\alpha)}
 \end{split}
\]
and thus \eqref{dyadic} follows.

We first estimate $\mathcal{A}\left[  H\right]  $ for
$H\in\mathcal{X}_{\rho}.$ Using that $K_{\lambda}\left(  Xe^{-\beta
t},Ze^{-\beta t}\right)  $ vanishes if $Z\leq\lambda$ and \eqref{dyadic} we obtain that
$\mathcal{A}\left[  H\right]  \left(  X,t\right)  \leq C_{\lambda}$ for
$H\in\mathcal{U}$,\ $X\leq1.$ On the other hand, in order to estimate
$\mathcal{A}\left[  H\right]  \left(  X,t\right)  $\ for $X\geq1$ we use the
fact that $K_{\lambda}\left(  Xe^{-\beta t},Ze^{-\beta t}\right)  $ vanishes
if $\left(  1-\frac{\lambda}{2}\right)  Z\leq\frac{\lambda}{2}X.$ Then \eqref{dyadic}
implies that
\[
\int_{0}^{\infty}\frac{K_{\lambda}\left(  Xe^{-\beta t},Ze^{-\beta t}\right)
}{Z}H\left(  Z,t\right)  dZ\leq C \int_{\frac{\lambda}{2-\lambda}X}^{\infty}
\frac{X^{\gamma} + Z^{\gamma}
}{Z}H(Z,t)dZ\leq C_{\lambda} X^{\gamma-\rho}\,, \qquad X \geq 1\,.
\]
Since $\rho>\gamma$ it  follows that $\mathcal{A}\left[  H\right]  \left(
X,t\right)  \leq C_{\lambda}$ for $H\in\mathcal{U}$,\ $X\geq1.$ Therefore:%
\begin{equation}
\mathcal{A}\left[  H\right]  \left(  X,t\right)  \leq C_{\lambda
}\ \ \text{for\ \ }H\in\mathcal{U\ },\text{\ \ }X>0 \,.\label{D8b}%
\end{equation}

Moreover, we can  estimate $\mathcal{Q}\left[  H\right]  \left(
X,t\right)  $ in the norm $\left\Vert \cdot\right\Vert _{\mathcal{X}_{\rho}}$.
In fact, due to \eqref{Ass1} and \eqref{D1} we find
\begin{align*}
\int_0^R\mathcal{Q}\left[  H\right]  \left(  X,t\right)
dX  
&  \leq C \int_0^R\int_{\max(\lambda/2, X \lambda/2)}^{X(1-\lambda/2)} \frac{Y^{\gamma} + (X{-}Y)^{\gamma}}{(X{-}Y)}
H\left(  X{-}Y,t\right)  H\left(
Y,t\right)\,dY\,dX \\
&\leq C \int_{\lambda/2}^R \int_{Y\lambda/2}^R
 \frac{Y^{\gamma} + X^{\gamma}}{X} H(X)\,dX \,H(Y)\,dY\,.
\end{align*}
If $R \leq 1$ the above estimate implies that 
\[
\int_{0}^R \mathcal{Q}[H](X,t)\,dX \leq C_{\lambda} R^{\gamma} \left(  R^{1{-}\rho}\right)
^{2}\leq C_{\lambda}R^{1-\rho}\,.
\]
To treat the case $R>1$, recall  that due to $H(\cdot,t) \in \mathcal{X}_{\rho}$, estimate \eqref{dyadic} and the fact
that $\gamma< \rho$  we have that
\[ \int_{Y\lambda/2}^R \frac{H(X)}{X}\,dX \leq C_{\lambda} Y^{-\rho} \qquad \mbox{ and } \qquad 
\int_{\lambda^2/4}^{\infty}\frac{H(X)}{X^{1-\gamma}}\,dX\leq C_{\lambda}\,.
\]
This implies, since $\gamma-\rho<0$, that
\begin{align*}
 \int_0^R\mathcal{Q}\left[  H\right]  \left(  X,t\right)
dX  
& \leq C \Big( \int_{\lambda/2}^R Y^{\gamma} H(Y)\int_{Y\lambda/2}^R\frac{H(X)}{X}\,dX\,dY + 
\int_{\lambda/2}^R H(Y) \int_{\lambda^2/4}^R \frac{H(X)}{X^{1-\gamma}}\,dX\,dY\Big)\\
& \leq C_{\lambda} \Big( \int_{\lambda/2}^R Y^{\gamma-\rho} H(Y)\,dY +  R^{1-\rho}\Big)\\
 & \leq C_{\lambda} R^{1{-}\rho}\,
\end{align*}
and thus
\begin{equation}
\left\Vert \mathcal{Q}\left[  H\right]  \left(  \cdot,t\right)  \right\Vert
_{\mathcal{X}_{\rho}}\leq C_{\lambda} \,.\label{D8c}%
\end{equation}
Using (\ref{D8a}), (\ref{D8b}) and (\ref{D8c}) it follows that for $H \in \mathcal{U}$
\[
\left\Vert \mathcal{T}\left[  H\right]  \left(  \cdot,t\right)  \right\Vert
_{\mathcal{X}_{\rho}}\leq2\left\Vert h_{0}\right\Vert _{\mathcal{X}_{\rho}%
}\ ,\qquad \ 0\leq t\leq T\,,
\]
if $T=T(\lambda)$ is sufficiently small. Note that for any
$H\in\mathcal{U}$ we have $\mathcal{T}\left[  H\right]  \geq0$ by
construction. Therefore $\mathcal{T}$ maps $\mathcal{U}$ to $\mathcal{U}$ if
$T>0$ is sufficiently small. 

Analogous arguments yield
\begin{align*}
\left\vert \mathcal{A}\left[  H_{1}\right]  \left(  X,t\right)  -\mathcal{A}%
\left[  H_{2}\right]  \left(  X,t\right)  \right\vert  &  \leq C_{\lambda
}\left\Vert H_{1}-H_{2}\right\Vert _{\mathcal{X}_{\rho}}\,,\ \ \ H_{1}%
,\ H_{2}\in\mathcal{U}\,,\\
\left\Vert \mathcal{Q}\left[  H_{1}\right]  \left(  \cdot,t\right)
-\mathcal{Q}\left[  H_{2}\right]  \left(  \cdot,t\right)  \right\Vert
_{\mathcal{X}_{\rho}}  &  \leq C_{\lambda}\left\Vert H_{1}\left(
\cdot,t\right)  -H_{2}\left(  \cdot,t\right)  \right\Vert _{\mathcal{X}_{\rho
}}\, ,\ \ H_{1},\ H_{2}\in\mathcal{U}\,.
\end{align*}
As a consequence we obtain that $\mathcal{T}$ is strongly contractive 
if $T$ is sufficiently small and Banach's fixed point theorem implies 
 that there exists a unique solution of
the equation $H=\mathcal{T}\left[  H\right]  $ in $\mathcal{U}$ in $[0,T]$.
\end{proof}

We need to prove that a mild solution of (\ref{D4})-(\ref{D5}) is also a weak solution in the
following sense.

\begin{definition}
\label{def2} We will say that $H$ is a weak solution of (\ref{D4})-(\ref{D5})
in $\left[  0,\infty\right)  \times\left[  0,T\right]  $ if for any ${t}\in\left[  0,T\right]  $ 
and any test function $\psi\in C_{0}^{1}\left(
\left[  0,\infty\right)  \times\left[  0,{t}\right]  \right)  $ we have:%
\begin{equation}\label{S2}
 \begin{split}
  \int & H\left(  X,{t}\right)  \psi\left(  X,{t}\right)  dX-\int
h_{0}\left(  X\right)  \psi\left(  X,0\right)  dX-\int_{0}^{t}\left[
\int\partial_{s}\psi\left(  X,s\right)  H\left(  X,s\right)  dX\right]
ds\\
&  = -\int_{0}^{t}\left[  \int\psi\left(  X,s\right)  \mathcal{A}\left[
H\right]  \left(  X,s\right)  H\left(  X,s\right)  dX\right]  ds+\int
_{0}^{t}\left[  \int\psi\left(  X,s\right)  \mathcal{Q}\left[  H\right]
\left(  X,s\right)  dX\right]  ds   \,.
\end{split}
\end{equation}
\end{definition}

We have the following result.

\begin{lemma}
\label{L1}Suppose that $H\in C\left(  \left[  0,T\right]  ;\mathcal{X}_{\rho
}\right)  $ is a mild solution of (\ref{D4})-(\ref{D5}).  Then, it is also a weak solution of (\ref{D4})-(\ref{D5})
in the sense of Definition \ref{def2}.
\end{lemma}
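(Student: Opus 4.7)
The plan is to derive the weak identity \eqref{S2} by direct substitution of the mild representation \eqref{D8a} into $\int H(X,t)\psi(X,t)\,dX$ and then performing an integration by parts in the time variable together with Fubini. No further estimates beyond those already established in the proof of Lemma \ref{L.WPR} should be needed.

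Set, for brevity, $a(X,t)=\int_{0}^{t}\mathcal{A}[H](X,\xi)\,d\xi$, so that \eqref{D8a} reads
\[
 H(X,t)=h_{0}(X)e^{-a(X,t)}+\int_{0}^{t}e^{-(a(X,t)-a(X,s))}\mathcal{Q}[H](X,s)\,ds.
\]
Because $\mathcal{A}[H]$ is continuous and bounded on $[0,\infty)\times[0,T]$ (cf.\ \eqref{D8b}), the function $a$ is $C^{1}$ in $t$ with $\partial_{t}a=\mathcal{A}[H]$, and $e^{-a}$ is likewise bounded. First, I would rewrite the ``initial'' term using the fundamental theorem of calculus applied to $s\mapsto \psi(X,s)e^{-a(X,s)}$, obtaining
\[
 \psi(X,t)h_{0}(X)e^{-a(X,t)}=\psi(X,0)h_{0}(X)+\int_{0}^{t}h_{0}(X)e^{-a(X,s)}\bigl[\partial_{s}\psi(X,s)-\psi(X,s)\mathcal{A}[H](X,s)\bigr]\,ds,
\]
and then integrate in $X$ against the (finite on compact sets) measure $h_{0}$.

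The key step is to handle the source term. Writing $\psi(X,t)=\psi(X,s)+\int_{s}^{t}\partial_{\tau}\psi(X,\tau)\,d\tau$ and $e^{-(a(X,t)-a(X,s))}=1-\int_{s}^{t}\mathcal{A}[H](X,\tau)e^{-(a(X,\tau)-a(X,s))}\,d\tau$, I would substitute both into $\int_0^t\!\!\int\psi(X,t)e^{-(a(X,t)-a(X,s))}\mathcal{Q}[H](X,s)\,dX\,ds$. This gives a main term $\int_0^t\!\!\int\psi(X,s)\mathcal{Q}[H](X,s)\,dX\,ds$ plus two error terms containing $\int_{s}^{t}(\cdot)\,d\tau$. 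Applying Fubini (justified because $\mathcal{A}[H]$, $\psi$, $\partial_\tau\psi$ are uniformly bounded and $\mathcal{Q}[H](\cdot,s)$ is in $\mathcal{X}_\rho$ uniformly in $s$, while all supports in $X$ are compact on $\mathrm{supp}\,\psi$), I swap the order of the $s$ and $\tau$ integrals and recognize the inner $s$-integral as
\[
 \int_{0}^{\tau}e^{-(a(X,\tau)-a(X,s))}\mathcal{Q}[H](X,s)\,ds = H(X,\tau)-h_{0}(X)e^{-a(X,\tau)}
\]
by \eqref{D8a} itself. Collecting all contributions, the terms involving $h_{0}(X)e^{-a(X,\tau)}$ cancel exactly against those produced in the first step, and one is left precisely with the identity \eqref{S2}.

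The only delicate points are (i) validating Fubini and the pointwise (in $X$) differentiation used above when $H(\cdot,t)$ and $\mathcal{Q}[H](\cdot,s)$ are measures rather than functions, and (ii) making sense of $\int\psi\,\mathcal{Q}[H]\,dX$ and $\int\psi\,\mathcal{A}[H]\,H\,dX$ as iterated integrals against products of measures. Both are handled by the observation that $\psi$ is compactly supported in $X$, $\mathcal{A}[H]$ is a bounded continuous function, and the bound \eqref{D8c} gives $\mathcal{Q}[H](\cdot,s)\in\mathcal{X}_{\rho}$ uniformly in $s$; hence all double integrals are absolutely convergent and the manipulations above are legitimate for measure-valued $H$. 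This step—carefully verifying the Fubini/cancellation argument at the level of measures—is the only genuine obstacle, but it is routine given the $\lambda$-dependent bounds already derived.
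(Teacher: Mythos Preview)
Your approach is correct and amounts to the same argument as the paper's, only spelled out in more detail: the paper simply says that since $\mathcal{A}[H]$ is bounded continuous and $\mathcal{Q}[H](\cdot,s)$ is a locally bounded measure, one may differentiate the weak form of \eqref{D8a} in $t$ (which recovers \eqref{D7a} tested against $\psi$) and then integrate in $s$. Your explicit Duhamel/Fubini computation is the hand-unrolled version of that differentiation.

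One small slip to fix: for the source term you propose expanding $\psi(X,t)$ and $e^{-(a(X,t)-a(X,s))}$ \emph{separately} and substituting, claiming ``two error terms''. That product expansion actually gives three terms (including a cross term), and the first error carries a factor $\psi(X,s)$ rather than $\psi(X,\tau)$, so after Fubini the inner $s$-integral is \emph{not} $\int_0^\tau e^{-(a(X,\tau)-a(X,s))}\mathcal{Q}[H](X,s)\,ds$ and your recognition step fails as written. The cure is exactly what you already did for the $h_0$ term: apply the fundamental theorem of calculus to $\tau\mapsto \psi(X,\tau)e^{-(a(X,\tau)-a(X,s))}$, obtaining
\[
\psi(X,t)e^{-(a(X,t)-a(X,s))}=\psi(X,s)+\int_s^t e^{-(a(X,\tau)-a(X,s))}\bigl[\partial_\tau\psi(X,\tau)-\psi(X,\tau)\mathcal{A}[H](X,\tau)\bigr]\,d\tau,
\]
which after Fubini gives precisely the inner integral $\int_0^\tau e^{-(a(X,\tau)-a(X,s))}\mathcal{Q}[H](X,s)\,ds=H(X,\tau)-h_0(X)e^{-a(X,\tau)}$ you want, and the cancellation with the $h_0$ contribution then goes through exactly as you describe.
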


\begin{proof}
We have seen in the proof of Lemma \ref{L.WPR} that  $\mathcal{A}\left[  H\right]  $ is continuous in $X$ and $t$  and bounded 
and that $\mathcal{Q}\left[
H\right]  $ is a locally bounded measure. 
Hence we can take the time derivative in the weak formulation of \eqref{D8a}, that is after multiplying \eqref{D8a} with 
$\psi \in C^0_0([0,\infty))$ and integrating.
We can do the same if $\psi=\psi(X,s)$ with $\psi \in C^1_0([0,\infty) \times [0, t])$ which implies the statement of the Lemma after integrating
over $s$.
\end{proof}

We can now use the weak formulation for $H$ to show that we can extend the local solution for all times $t>0$.

\begin{proposition}
\label{WPR}
 For any $\lambda>0$ and any $h_{0}\in\mathcal{X}_{\rho}$
there exists a unique mild solution of (\ref{D4})-(\ref{D5}) for all times $t>0$.
Moreover, for any $T>0$ there exists a constant $C(T)$ that is independent of $\lambda$
such that
\begin{equation}\label{S1E3a}
\sup_{0\leq t \leq T} \left\Vert H\left(  \cdot,t\right)  \right\Vert _{\mathcal{X}_{\rho}}\leq
C\left(  T\right) \,.
\end{equation}
\end{proposition}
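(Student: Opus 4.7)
The plan is to derive a differential inequality for the cumulative mass $N(R,t) := \int_0^R H(X,t)\,dX$ by testing the weak formulation of Lemma \ref{L1} against (a smooth approximation of) the characteristic function $\chi_{[0,R]}(X)$. Once \eqref{S1E3a} is established as an a priori bound independent of $\lambda$, the global-in-time existence follows from the local well-posedness of Lemma \ref{L.WPR} via a standard continuation argument: the local time there depends only on $\|h_0\|_{\mathcal{X}_\rho}$ (and $\lambda$), so as long as $\|H(\cdot,t)\|_{\mathcal{X}_\rho}$ stays bounded one can restart at $t+T(\lambda)$ and reach any prescribed $T$.

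The key computation is as follows. Plugging a smooth nonnegative cut-off $\psi_n \uparrow \chi_{[0,R]}$ into \eqref{S2} and using that $\mathcal{A}[H]$ is bounded and $\mathcal{Q}[H]$ is a locally bounded measure (both established in the proof of Lemma \ref{L.WPR}) allows one to pass to the limit by dominated convergence and obtain
\begin{equation*}
\frac{d}{dt} N(R,t) \;=\; \beta\rho\, N(R,t) \;-\; \int_0^R\! H(X,t)\!\int_0^\infty\! \tfrac{K_\lambda(Xe^{-\beta t},Ze^{-\beta t})}{Z} H(Z,t)\,dZ\,dX \;+\; \int_0^R \mathcal{Q}[H](X,t)\,dX.
\end{equation*}
Performing the substitution $X = Y+Z$ in $\mathcal{Q}[H]$ shows that the $\mathcal{Q}$-contribution integrates over $\{Y+Z\leq R\}$, while the loss integral runs over the larger set $\{Y\leq R,\,Z\geq 0\}$. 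Subtracting, the difference is exactly $-\int_0^R H(Y,t) \int_{R-Y}^\infty \frac{K_\lambda(Ye^{-\beta t},Ze^{-\beta t})}{Z} H(Z,t)\,dZ\,dY \leq 0$. Therefore
\begin{equation*}
\frac{d}{dt} N(R,t) \;\leq\; \beta\rho\, N(R,t),
\end{equation*}
and Gronwall together with $N(R,0) \leq \|h_0\|_{\mathcal{X}_\rho} R^{1-\rho}$ gives $N(R,t) \leq e^{\beta\rho t}\|h_0\|_{\mathcal{X}_\rho} R^{1-\rho}$. Dividing by $R^{1-\rho}$ and taking the supremum over $R > 0$ yields \eqref{S1E3a} with $C(T) = e^{\beta\rho T}\|h_0\|_{\mathcal{X}_\rho}$, manifestly independent of $\lambda$.

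The main technical obstacle is the rigorous passage from $C^1_0$ test functions to the indicator $\chi_{[0,R]}$. Because $H(\cdot,t)$ is only a Radon measure and may carry atoms, a naive smooth approximation can pick up a jump at the endpoint $X=R$; more seriously, $\mathcal{Q}[H](\cdot,t)$ is itself an atomic convolution and is not a priori continuous in $R$. I would circumvent this by fixing $R$ outside the at-most-countable set of atoms of $H(\cdot,s)$ and $\mathcal{Q}[H](\cdot,s)$ for a dense set of times, deriving the differential inequality there, and then using the lower semicontinuity of $R \mapsto N(R,t)$ to conclude the inequality for \emph{every} $R\geq 0$. The sign cancellation in the gain-minus-loss computation is the essential structural fact that makes the bound hold without any unfavorable $\lambda$-dependence.
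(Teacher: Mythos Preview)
Your proposal is correct and follows essentially the same route as the paper: both exploit that the coagulation gain-minus-loss term is nonpositive when paired with a nonincreasing test function (the paper phrases this via the rearrangement identity \eqref{rearrange}, you via the direct set comparison $\{Y+Z\le R\}\subset\{Y\le R\}$), and both conclude by Gronwall after approximating $\chi_{[0,R]}$. Your treatment of the endpoint/atom issue in the approximation is in fact more careful than the paper's, which simply asserts that one may pass to $\chi_{[0,R]}$.
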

\begin{proof}
The local  solution of Lemma \ref{L.WPR}  can be extended in time as long as we have a uniform estimate for
$\left\Vert H\left(  \cdot,t\right)  \right\Vert _{\mathcal{X}_{\rho}}.$

In order to derive this estimate, we recall the well-known reformulation of the nonlinear coagulation term, stated here for general
functions $\psi, h$ and $K$.

\begin{equation}\label{rearrange}
\begin{split}
\int\psi(x)  &\int_{0}^{\infty}\frac{K\left(
x,z\right)  }{z}h\left(  z\right)  h\left(  x\right) \,dz\,dx 
  -\int\psi\left(  x\right)  \int_{0}^{x}\frac{K\left(
y,x-y\right)  }{\left(  x-y\right)  }h\left(  x-y\right)  h\left(
y\right) \,dy\,dx\\
 =&\int\psi(x)  \int_{0}^{\infty}\frac{K\left(
x,z\right)  }{z}h\left(  z\right)  h(x)\,dz\,dx  
  -\int \int_{y}^{\infty}\frac{K\left(  y,x-y\right)  }{\left(
x-y\right)  }h\left(  x-y\right)  h\left(  y\right)  \psi\left(
x\right) \,dx dy\\
  =&\int\psi(x)\int_{0}^{\infty}\frac{K\left(
x,z\right)  }{z}h\left(  z\right)  h(x)\,dz\,dx  
  -\int \int_{0}^{\infty}\frac{K\left(  y,z\right)  }{z}h\left(
z\right)  h\left(  y\right)  \psi\left(  z+y\right)  dz\,dy\\
 =&\int dx\int_{0}^{\infty}\frac{K\left(  x,z\right)  }{z}h\left(
z\right)  h\left(  x\right)  \left[  \psi\left(  x\right)  -\psi\left(
z+x\right)  \right]  dz\,.
\end{split}
\end{equation}
We use now  \eqref{rearrange} in \eqref{S2} for a nonnegative test function that is independent of $t$ and decreasing (the
different arguments $X e^{-\beta t}$ etc. in $K_{\lambda}$  do not affect \eqref{rearrange}).
This implies due to the positivity of $H$ that 
\begin{equation}\label{Hest}
  \int H(X,t) \psi(X)\,dX \leq \int h_0(X) \psi(X)\,dX + \beta \rho \int_0^t \int H(X,s) \psi(X)\,dX\,ds\,.
\end{equation}
We can now consider a sequence of test functions $\psi$ that approach the characteristic function on $[0,R]$. 
By a Gronwall argument we obtain  
(\ref{S1E3a}) with $C\left(  T\right)  $ independent of $\lambda.$
This implies that the solution $H$ is globally defined in time. 
\end{proof}

We can define weak solutions of (\ref{D2})-(\ref{D3}) in the same spirit as
in Definition \ref{def2}.

\begin{definition}
\label{def3} We will say that $h$ is a weak solution of (\ref{D2})-(\ref{D3})
in $\left[  0,\infty\right)  \times\left[  0,T\right]  $ if for any $
{t}\in\left[  0,T\right]  $ and any test function $\psi\in C_{0}^{1}\left(
\left[  0,\infty\right)  \times\left[  0,{t}\right]  \right)  $ we have
\begin{equation}\label{S4}
\begin{split}
&  \int h\left(  x,{t}\right)  \psi\left(  x,{t}\right)  dx-\int
h_{0}\left(  x\right)  \psi\left(  x,0\right)  dx-\int_{0}^{t}\left[
\int\partial_{s}\psi\left(  x,s\right)  h\left(  x,s\right)  dx\right]
ds \\
&  +\int_{0}^{t}\left[  \int\psi(x,s)  \int_{0}%
^{\infty}\frac{K_{\lambda}(x,z)}{z}h(z,s)
h(x,s)\,dz\,dx  \right] \,ds \\
&  -\int_{0}^{{t}}\left[  \int\psi(x,s)\int_{0}%
^{x}\frac{K_{\lambda}\left(  y,x-y\right)  }{\left(  x-y\right)  }h\left(
x-y,s\right)  h\left(  y,s\right) \,dy\,dx \right] \,ds \\
&  +\beta\int_{0}^{{t}}\int\partial_{x}\left(  x\psi\right)  h\left(
x,s\right)  dx\,ds-\beta\rho\int_{0}^{{t}}\int\psi\left(  x,s\right)
h\left(  x,s\right)  dx\,ds =0 \,.
\end{split}
\end{equation}

\end{definition}

By a simple change of variables and adapting the test functions correspondingly we obtain the following Lemma.
\begin{lemma}
\label{L2}Suppose that $H\in C\left(  \left[  0,T\right]  ;\mathcal{X}_{\rho
}\right)  $ is a solution of (\ref{D4})-(\ref{D5}) in the sense of Definition
\ref{def1}. Then, the function $h$ defined by means of (\ref{D6}) is also a
weak solution of (\ref{D2})-(\ref{D3}) in the sense of Definition \ref{def3}.
\end{lemma}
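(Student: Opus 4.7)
The plan is to start from the weak formulation (S2) that we obtain for $H$ from Lemma \ref{L1} and transform it into the weak formulation (S4) for $h$ by inserting a suitably adapted test function. Concretely, given $\psi\in C_0^1([0,\infty)\times[0,t])$ as in Definition \ref{def3}, I would define
\[
\tilde\psi(X,s):=e^{-\beta s}\,\psi\bigl(X e^{-\beta s},s\bigr)\,.
\]
The $e^{-\beta s}$ prefactor is precisely the Jacobian needed to make the change of variables $x=Xe^{-\beta s}$, $dX=e^{\beta s}\,dx$ compatible with the pointwise identification $H(X,s)=h(x,s)$ from (\ref{D6}). Note that if $\psi$ has compact support in $[0,R]\times[0,t]$, then $\tilde\psi$ is supported in $[0,Re^{\beta t}]\times[0,t]$, and $\tilde\psi\in C_0^1$, so it is admissible as a test function in Definition \ref{def2}.

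Next I would compute, using the chain rule,
\[
\partial_s\tilde\psi(X,s)=e^{-\beta s}\bigl[\partial_s\psi-\beta\psi-\beta x\,\partial_x\psi\bigr]\bigl(Xe^{-\beta s},s\bigr)\,.
\]
Substituting $\tilde\psi$ into (\ref{S2}) and doing the change of variables $x=Xe^{-\beta s}$ in every $X$-integral, the Jacobian $e^{\beta s}$ exactly cancels the $e^{-\beta s}$ in $\tilde\psi$ (and in $\partial_s\tilde\psi$), so that
\[
\int H(X,s)\tilde\psi(X,s)\,dX=\int h(x,s)\psi(x,s)\,dx
\]
and similarly the boundary and time-derivative terms convert into $\int h_0\psi(\cdot,0)\,dx$ and $\int[\partial_s\psi-\beta\psi-\beta x\partial_x\psi]\,h\,dx$.

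For the nonlinear terms I would likewise change variables $Z\mapsto z=Ze^{-\beta s}$ inside $\mathcal{A}[H]$ and $Y\mapsto y=Ye^{-\beta s}$ inside $\mathcal{Q}[H]$. In each case the factor $1/Z$ or $1/(X-Y)$ produces an $e^{-\beta s}$ which cancels exactly against the Jacobian $e^{\beta s}$ from $dZ$ or $dY$; using $K_\lambda(Xe^{-\beta s},Ze^{-\beta s})=K_\lambda(x,z)$ (not homogeneity, just renaming) and $H(Z,s)=h(z,s)$, one finds
\[
\mathcal{A}[H](X,s)=\int_0^\infty\frac{K_\lambda(x,z)}{z}h(z,s)\,dz-\beta\rho\,,
\qquad
\mathcal{Q}[H](X,s)=\int_0^x\frac{K_\lambda(y,x-y)}{x-y}h(x-y,s)h(y,s)\,dy\,.
\]

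Finally, I would collect all terms. The identity (\ref{S2}) becomes
\[
\int h\psi\,dx\Big|_t-\int h_0\psi_0\,dx-\int_0^t\!\!\int[\partial_s\psi-\beta\psi-\beta x\partial_x\psi]\,h\,dx\,ds=-\int_0^t\!\!\int\psi[\mathcal{A}_h-\beta\rho]\,h\,dx\,ds+\int_0^t\!\!\int\psi\,\mathcal{Q}_h\,dx\,ds,
\]
and rearranging via the elementary identity $\psi+x\partial_x\psi=\partial_x(x\psi)$ reproduces exactly (\ref{S4}). There is really no serious obstacle here, as the paper itself indicates; the only point requiring care is the identification of the correct Jacobian factor $e^{-\beta s}$ inside $\tilde\psi$, which is needed because (\ref{D6}) is a pointwise identification of densities rather than a pushforward of measures, so the various exponential factors must be bookkept consistently for them to cancel.
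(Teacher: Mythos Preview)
Your proposal is correct and is precisely the ``simple change of variables and adapting the test functions correspondingly'' that the paper invokes in lieu of a proof. The choice $\tilde\psi(X,s)=e^{-\beta s}\psi(Xe^{-\beta s},s)$, the chain-rule computation of $\partial_s\tilde\psi$, the cancellation of the Jacobians in $\mathcal{A}[H]$ and $\mathcal{Q}[H]$, and the final identification $\psi+x\partial_x\psi=\partial_x(x\psi)$ are all exactly right.
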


It will be convenient in the following to work with the notion of weak solutions, in particular
 for the proof of the weak continuity of the
evolution semigroup.
Given $h_{0}\in\mathcal{X}_{\rho}$ and $H$ as in Proposition
\ref{WPR} we can define $h$ as in (\ref{D6}). We will write, for
any $\lambda>0$%
\begin{equation}
S_{\lambda}\left(  t\right)  h_{0}=h\left(  \cdot,t\right)  \ ,\ \ t\geq0\,.
\label{S1}%
\end{equation}

Proposition \ref{WPR} implies that
 $S_{\lambda}\left(  t\right)  $ maps $\mathcal{X}_{\rho}$ into
itself. Moreover, it satisfies the usual properties satisfied by evolution
semigroups
\[
S_{\lambda}\left(  t_{1}\right)  S_{\lambda}\left(  t_{2}\right)  =S_{\lambda
}\left(  t_{1}+t_{2}\right)   \ ,\ \ t_{1},t_{2}\in\left[  0,\infty\right)
\  ,\ \ \ S_{\lambda}\left(  0\right)  =I\,.
\]

Our next goal is to show that the maps $S_{\lambda}\left(  t\right)  $ are
continuous in the weak topology.

We introduce, for further reference, the following auxiliary semigroup. Given
$h_{0}\in\mathcal{X}_{\rho}$ and $H_{\lambda}$ as in Proposition \ref{WPR} we
define
\begin{equation}
T_{\lambda}\left(  t\right)  h_{0}=H_{\lambda}\left(  \cdot,t\right)
\ \ ,\ \ t\geq0 \label{S5}%
\end{equation}

We remark that $T_{\lambda}\left(  t\right)  $ also satisfies  the semigroup
properties.

\subsection{Continuity of $S_{\lambda}\left(  t\right)  $ in the weak
topology}

\begin{proposition}
\label{weakcontin} A closed ball in   $\mathcal{X}_{\rho}\subset\mathcal{M}^{+}\left(
\left[  0,\infty\right)  \right)  $ is a compact subset of
 $\mathcal{M}^{+}\left(  \left[  0,\infty\right)  \right)  $ 
 endowed with the weak topology. The transformation $S_{\lambda
}\left(  t\right)  $ defined by means of (\ref{S1}) for any $t\in\left[
0,T\right]  $ is a continuous map from $\mathcal{X}_{\rho}$ into itself.
\end{proposition}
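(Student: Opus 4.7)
The proposition has two parts, both resting on the cutoff structure of $K_\lambda$.

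For the compactness assertion, the closed ball $B_M := \{h \in \mathcal{X}_\rho : \|h\|_{\mathcal{X}_\rho} \leq M\}$ consists of positive Radon measures satisfying $h([0,R]) \leq M R^{1-\rho}$, and is therefore uniformly bounded on every compact subset of $[0,\infty)$. Banach--Alaoglu (Helly) then yields relative sequential compactness in the vague topology. To check that $B_M$ is also vaguely closed, note that if $h_n \to h$ vaguely and $\varphi_\varepsilon \in C_c$ satisfies $\chi_{[0,R]} \leq \varphi_\varepsilon \leq \chi_{[0,R+\varepsilon]}$, then $h([0,R]) \leq \int \varphi_\varepsilon\,dh = \lim_n \int \varphi_\varepsilon\,dh_n \leq M(R+\varepsilon)^{1-\rho}$, and letting $\varepsilon \to 0$ recovers the bound.

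For the continuity, take $h_0^{(n)} \to h_0$ vaguely in some $B_M$ and set $h^{(n)}(\cdot, s) := S_\lambda(s) h_0^{(n)}$, $h(\cdot, s) := S_\lambda(s) h_0$. By Proposition \ref{WPR} all orbits sit in a common ball $B_{C(T)}$ for $s \in [0,T]$. The plan is a weak-star Arzel\`a--Ascoli argument. From the weak formulation \eqref{S4} together with the $\mathcal{A}$, $\mathcal{Q}$ bounds \eqref{D8b}--\eqref{D8c} established in Lemma \ref{L.WPR}, for every $\varphi \in C^1_c([0,\infty))$ the map $s \mapsto \int \varphi\,h^{(n)}(x,s)\,dx$ is Lipschitz with constant independent of $n$. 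Diagonalising over a countable dense family of such test functions then extracts a subsequence $h^{(n_k)}(\cdot, s)$ converging vaguely, uniformly in $s \in [0,T]$, to a limit $\tilde h(\cdot, s) \in B_{C(T)}$.

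The remaining step is to identify $\tilde h$ with $h$, which I would do by passing to the limit in \eqref{S4} for $h^{(n_k)}$. The transport and drift contributions are linear in $h^{(n_k)}$ tested against compactly supported functions, so they converge at once. Rewriting the nonlinear contribution via \eqref{rearrange} gives the expression
\[
\int_0^t \iint \frac{K_\lambda(xe^{-\beta s}, ze^{-\beta s})}{z}\,[\psi(x,s) - \psi(x+z,s)]\,h^{(n_k)}(x,s)\,h^{(n_k)}(z,s)\,dx\,dz\,ds,
\]
and here the cutoffs are decisive: the four $\zeta$-factors in \eqref{D1} force both $x$ and $z$ to exceed $c_\lambda e^{\beta s}$ and to be comparable to one another, while the compact support of $\psi$ caps them from above, so the integrand is continuous with compact support in $(0,\infty)^2$. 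Vague convergence of the product measures $h^{(n_k)}(\cdot,s) \otimes h^{(n_k)}(\cdot,s) \to \tilde h(\cdot,s) \otimes \tilde h(\cdot,s)$ then gives pointwise in $s$ convergence, and the uniform bound in $B_{C(T)}$ together with dominated convergence handles the $s$-integration. Consequently $\tilde h$ is a weak solution of \eqref{D2}--\eqref{D3} with datum $h_0$, and the uniqueness from Lemma \ref{L.WPR} (transferred via Lemma \ref{L2}) forces $\tilde h = h$; since every vaguely convergent subsequence of $\{h^{(n)}(\cdot, t)\}$ has this common limit, the full sequence converges vaguely.

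The real obstacle is precisely this passage to the limit in the quadratic term, and it is available only thanks to the regularisation; for the unregularised $K$ one would lose control of the integrand near $x = 0$, which is of course the whole reason for introducing $K_\lambda$. A variant using the dual problem runs as follows: take $\psi$ to solve \eqref{S7a} with $h$ in place of $h^{(n)}$ and final datum $\varphi$, and subtract the weak formulations for $h^{(n)}$ and $h$ to obtain
\[
\int [h^{(n)}(t) - h(t)]\varphi\,dx = \int [h_0^{(n)} - h_0]\psi(x,0)\,dx + \int_0^t \iint \frac{K_\lambda}{z}\,h^{(n)}(x,s)\,[h^{(n)}(z,s) - h(z,s)]\,[\psi(x+z,s) - \psi(x,s)]\,dz\,dx\,ds,
\]
from which the conclusion follows by a Gronwall-type estimate on a suitable weak seminorm.
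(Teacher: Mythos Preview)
Your Arzel\`a--Ascoli/compactness route is a genuinely different strategy from the paper's, and most of it is sound: the cutoffs in $K_\lambda$ do confine the support of the integrand in the rewritten quadratic term to a compact subset of $(0,\infty)^2$ (since $\psi$ compactly supported bounds $x$, and the comparability cutoff $\zeta\big(x/(\lambda(x+z))\big)$ then bounds $z$), so vague convergence of the product measures is enough to pass to the limit.  The paper instead proves continuity directly via the dual problem: given two initial data $h_{0,1},h_{0,2}$ with orbits $H_1,H_2$, it constructs $\psi$ solving a linear adjoint equation whose coefficient is the \emph{symmetrised} kernel $G(X,Z,s)=K_\lambda\cdot\tfrac12(H_1+H_2)(Z,s)$.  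With this choice the difference $H_1\otimes H_1-H_2\otimes H_2$ collapses exactly to a term linear in $H_1-H_2$, yielding the clean identity $\int(H_1(t)-H_2(t))\bar\psi\,dX=\int(h_{0,1}-h_{0,2})\psi(\cdot,0)\,dX$ with no remainder.  Your ``variant'' at the end is close in spirit but uses only $h$ in the dual equation, leaving a quadratic remainder; the symmetrisation is precisely what makes the paper's argument close without any Gronwall step.

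There is, however, a real gap in your main argument at the identification step.  You conclude $\tilde h=h$ by invoking ``uniqueness from Lemma~\ref{L.WPR} (transferred via Lemma~\ref{L2})'', but Lemma~\ref{L.WPR} gives uniqueness only for \emph{mild} solutions of (\ref{D4})--(\ref{D5}), and Lemma~\ref{L1}/\ref{L2} only go in the direction mild $\Rightarrow$ weak.  Your limit $\tilde h$ is obtained as a weak solution, and the paper nowhere establishes uniqueness in the weak class, nor the implication weak $\Rightarrow$ mild.  For the regularised problem this implication is plausible (since $\mathcal A[H]$ is a bounded continuous function and $\mathcal Q[H]$ a locally bounded measure, one can in principle recover the Duhamel formula from the weak identity), but it requires an argument you have not supplied.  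The paper's dual approach sidesteps this entirely: because the adjoint test function produces an exact identity between the difference at time $t$ and the difference at time $0$, no uniqueness statement is ever needed.
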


\begin{corollary}
The mapping $S_{\lambda}\left(  t\right)  :\mathcal{X}_{\rho}\rightarrow
\mathcal{X}_{\rho}$ is compact for any $t\in\left[  0,T\right]  $ if
$\mathcal{X}_{\rho}$ is endowed with the weak topology.
\end{corollary}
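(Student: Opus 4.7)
The plan is to combine the two assertions of Proposition \ref{weakcontin} with the a priori bound (\ref{S1E3a}) of Proposition \ref{WPR}. Recall that a continuous map between topological spaces is compact precisely when it sends bounded sets to relatively compact sets. Since the weak continuity of $S_{\lambda}(t) \colon \mathcal{X}_{\rho} \to \mathcal{X}_{\rho}$ is exactly the second assertion of Proposition \ref{weakcontin}, the only remaining task is to exhibit, for every bounded $B \subset \mathcal{X}_{\rho}$, a weakly compact subset of $\mathcal{X}_{\rho}$ that contains $S_{\lambda}(t)(B)$.

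First I would verify that $S_{\lambda}(t)$ maps norm-bounded sets to norm-bounded sets, uniformly for $t \in [0,T]$. If $\|h_0\|_{\mathcal{X}_{\rho}} \leq M$, the Gronwall step carried out in the proof of Proposition \ref{WPR}, applied to (\ref{Hest}) with a test function approximating the characteristic function of $[0,R]$ and dividing by $R^{1-\rho}$, gives
\[
\sup_{0 \leq s \leq T} \|H(\cdot,s)\|_{\mathcal{X}_{\rho}} \leq M e^{\beta \rho T}\,.
\]
Translating back to $h$ via the change of variables (\ref{D6}), a direct computation yields
\[
\int_0^R h(x,t)\,dx = e^{-\beta t} \int_0^{R e^{\beta t}} H(X,t)\,dX \leq M e^{\beta \rho T} e^{-\beta \rho t} R^{1-\rho}\,,
\]
so $\|S_{\lambda}(t) h_0\|_{\mathcal{X}_{\rho}} \leq M e^{\beta \rho T}$ uniformly in $t \in [0,T]$.

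Next, the first assertion of Proposition \ref{weakcontin} says that the closed ball of radius $Me^{\beta\rho T}$ in $\mathcal{X}_{\rho}$ is compact in the weak topology on $\mathcal{M}^+([0,\infty))$. The defining inequality $\int_0^R h\,dx \leq CR^{1-\rho}$ is preserved under weak limits of nonnegative measures by testing against a nonincreasing approximation of $\chi_{[0,R]}$, so this ball is in fact a weakly compact subset of $\mathcal{X}_{\rho}$ itself. Since $S_{\lambda}(t)(B)$ is contained in such a ball whenever $B$ is bounded, it is relatively weakly compact in $\mathcal{X}_{\rho}$, which together with the already-established weak continuity is precisely the definition of compactness of $S_{\lambda}(t)$.

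There is no real obstacle here: the argument is a purely topological packaging of the two ingredients provided by Proposition \ref{weakcontin} and the uniform bound (\ref{S1E3a}), which is exactly why the statement is naturally phrased as a corollary. The only minor point that requires a second's thought is the preservation of the defining inequality of $\mathcal{X}_{\rho}$ under weak limits, but this is immediate from the monotone convergence argument just sketched.
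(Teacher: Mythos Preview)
Your argument is correct, but it is more elaborate than what the paper needs or intends. The paper gives no explicit proof for this Corollary because it follows immediately from the two assertions of Proposition \ref{weakcontin} alone, without any appeal to Proposition \ref{WPR} or the bound (\ref{S1E3a}). Namely: a bounded set $B \subset \mathcal{X}_{\rho}$ is contained in some closed ball, which by the first part of Proposition \ref{weakcontin} is weakly compact; the second part says $S_{\lambda}(t)$ is weakly continuous, and the continuous image of a compact set is compact. Hence $S_{\lambda}(t)(B)$ lies in a weakly compact set and is therefore relatively compact.

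Your route---first showing via Gronwall that $S_{\lambda}(t)$ sends norm-bounded sets to norm-bounded sets, then invoking the compactness of the \emph{target} ball---is also valid, and the calculation you give is fine. But the detour through (\ref{S1E3a}) is unnecessary: since the \emph{source} ball is already compact in the weak topology, weak continuity alone does the work. This is why the result is stated as a bare corollary with no proof.
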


\begin{remark}
The continuity that we obtain is not uniform in $\lambda.$
\end{remark}

\begin{proof}
[Proof of Proposition \ref{weakcontin}]The transformation (\ref{D6}) is
continuous in the weak topology by the fact that the
adjoint transformation, that acts on the test functions, brings the space
$C^{0}\left[  0,\infty\right)  $ to itself and  is continuous in the $L^{\infty}$-norm.
 Therefore, we just need to check that the transformations $T_{\lambda
}\left(  t\right)  $ given in (\ref{S5}) is continuous in the weak topology.
Since the transformation is nonlinear it is not sufficient to check continuity
at $h_{0}=0.$ More precisely, let us fix some time ${t}\in\left[
0,T\right]  $ and consider  a test function $\bar{\psi}\left(
X\right)  ,\ \bar{\psi}\in C_{0}^{1}(\left[  0,\infty\right)).$ Suppose that
we have two functions $H_{1},\ H_{2}$ such that $T_{\lambda}\left(  t\right)
h_{0,k}=H_{k}\left(  \cdot,t\right)  ,$ $k=1,2.$ We want to show that
$\int\bar{\psi}\left[  H_{1}-H_{2}\right]  dX$ can be made small if
$h_{0,1},\ h_{0,2}$ are close in the sense of the weak topology.

To this end we will construct a  suitable function $\psi\left(  X,t\right)$ such 
that $\psi\left(  X,t\right)  =\bar{\psi
}\left(  X\right) $ and
\begin{equation}\label{psidef}
\int\left(  H_{1}\left(  X,{t}\right)  -H_{2}\left(  X,{t}\right)
\right)  \bar{\psi}\left(  X\right)  dX=\int\left(  h_{0,1}\left(  X\right)
-h_{0,2}\left(  X\right)  \right)  \psi\left(  X,0\right)  dX
\end{equation}
To see how to define $\psi$, observe that due to Lemma \ref{L1} we have that $H_{1}$ and $H_{2}$
satisfy (\ref{S2}). Subtracting these equations we obtain
\begin{equation}\label{T1E3}
\begin{split}
  \int&\left(  H_{1}\left(  X,{t}\right)  -H_{2}\left(  X,{t}\right)
\right)  \psi\left(  X,{t}\right)  dX 
-\int\left(  h_{0,1}\left(  X\right)
-h_{0,2}\left(  X\right)  \right)  \psi\left(  X,0\right)  dX\\
&-\int_{0}%
^{{t}}\left[  \int\partial_{s}\psi\left(  X,s\right)  \left(  H_{1}\left(
X,s\right)  -H_{2}\left(  X,s\right)  \right)  dX\right]  ds\\
&  +\int_{0}^{{t}}\left[  \int\psi\left(  X,s\right)  \left(
\mathcal{A}\left[  H_{1}\right]  \left(  X,s\right)  H_{1}\left(  X,s\right)
-\mathcal{A}\left[  H_{2}\right]  \left(  X,s\right)  H_{2}\left(  X,s\right)
\right)  dX\right]  ds\\
&-\int_{0}^{{t}}\left[  \int\psi\left(  X,s\right)
\left(  \mathcal{Q}\left[  H_{1}\right]  \left(  X,s\right)  -\mathcal{Q}%
\left[  H_{2}\right]  \left(  X,s\right)  \right)  dX\right]  ds
  =0 \,.
\end{split}
\end{equation}

We need to transform the last two integral terms on the left-hand side of
(\ref{T1E3}). To this end note that
\[
\begin{split}
  \int &\psi\left(  X,s\right)  \left(  \mathcal{A}\left[  H_{1}\right]
\left(  X,s\right)  H_{1}\left(  X,s\right)  -\mathcal{A}\left[  H_{2}\right]
\left(  X,s\right)  H_{2}\left(  X,s\right)  \right)  dX\\
 = &\int\psi\left(  X,s\right)  \Big(  \int_{0}^{\infty}\frac{K_{\lambda
}\left(  Xe^{-\beta s},Ze^{-\beta s}\right)  }{Z}\big ( H_{1}(X,s) H_{1}(Z,s)
-H_2(X,s) H_2(Z,s)\big)  dZ\,dX 
\\
&\qquad
  -\beta\rho\int\psi\left(  X,s\right)  \left(  H_{1}\left(  X,s\right)
-H_{2}\left(  X,s\right)  \right)  dX\\
= &\frac{1}{2}\int\psi(X,s) \Big(  \int_{0}^{\infty}%
\frac{K_{\lambda}\left(  Xe^{-\beta s},Ze^{-\beta s}\right)  }{Z}
\Big[ H_{1}(X,s)  \left(  H_{1}(Z,s)  -H_{2}(Z,s)  \right) \\
&\qquad \qquad \qquad +\left(  H_{1}(X,s)  -H_{2}(X,s)  \right)  H_{2}( Z,s)  \Big]  dZ\Big)  \,dX\\
&  \qquad +\frac{1}{2}\int\psi(X,s) \Big(  \int_{0}^{\infty}%
\frac{K_{\lambda}( Xe^{-\beta s},Ze^{-\beta s})  }{Z} \Big[(  H_{1}(X,s)-H_{2}(X,s))
H_{1}(Z,s) \\
& +H_{2}(X,s)(  H_{1}(Z,s)  -H_{2}(Z,s))  \Big]  dZ\Big)\,dX   -\beta\rho\int\psi(X,s) (  H_{1}(X,s)-H_{2}(X,s))  dX\\
 = &\int\psi(X,s)  \int_{0}^{\infty}\frac{K_{\lambda}\left(
Xe^{-\beta s},Ze^{-\beta s}\right)  }{Z}  \cdot \Big[ \frac{H_{1}(X,s)  +H_{2}(X,s)  }%
{2}(  H_{1}(Z,s)  -H_{2}(Z,s))\\
&\qquad \qquad \qquad 
+\frac{H_{1}(Z,s)  +H_{2}(Z,s)}{2}(
H_{1}(X,s)  -H_{2}(X,s) )\Big]  dZ\,dX\\
& \qquad  -\beta\rho\int\psi(X,s) (  H_{1}(X,s)-H_{2}(X,s))  dX\,.
\end{split}
\]
Hence
\[
\begin{split}
&  \int\psi\left(  X,s\right)  \left(  \mathcal{A}\left[  H_{1}\right]
\left(  X,s\right)  H_{1}\left(  X,s\right)  -\mathcal{A}\left[  H_{2}\right]
\left(  X,s\right)  H_{2}\left(  X,s\right)  \right)  dX\\
&  =\frac{1}{2}\int\psi(X,s) \int_{0}^{\infty}\Big(
\frac{G\left(  Z,X,s\right)  }{Z}\left(  H_{1}\left(  Z,t\right)
-H_{2}\left(  Z,s\right)  \right) \\
& +\frac{G\left(  X,Z,s\right)  }{Z}\left(
H_{1}\left(  X,s\right)  -H_{2}\left(  X,s\right)  \right)  \Big)  dZ\,dX
  -\beta\rho\int\psi\left(  X,s\right)  \left(  H_{1}\left(  X,s\right)
-H_{2}\left(  X,s\right)  \right)  dX\,,
\end{split}
\]
where
\[
G\left(  X,Z,s\right)  =K_{\lambda}\left(  Xe^{-\beta s},Ze^{-\beta s}\right)
\frac{H_{1}\left(  Z,s\right)  +H_{2}\left(  Z,s\right)  }{2}\,.
\]

Exchanging the labels $Z$ and $X$ in the first term on the right hand side,  we find
\begin{align*}
 \int\psi&\left(  X,s\right)  \left(  \mathcal{A}\left[  H_{1}\right]
\left(  X,s\right)  H_{1}\left(  X,s\right)  -\mathcal{A}\left[  H_{2}\right]
\left(  X,s\right)  H_{2}\left(  X,s\right)  \right)  dX\\
 =&\int \int_{0}^{\infty}\Big(  \frac{G\left(  X,Z,s\right)  }{X}\left(
H_{1}\left(  X,s\right)  -H_{2}\left(  X,s\right)  \right)  \psi\left(
Z,s\right)  \\
&+\frac{G\left(  X,Z,s\right)  }{Z}\left(  H_{1}\left(  X,s\right)
-H_{2}\left(  X,s\right)  \right)  \psi\left(  X,s\right)  \Big)  dZ\,dX\\
&  -\beta\rho\int\psi\left(  X,s\right)  \left(  H_{1}\left(  X,s\right)
-H_{2}\left(  X,s\right)  \right)  dX\\
  =&\int \left(  H_{1}\left(  X,s\right)  -H_{2}\left(  X,s\right)  \right)
\int_{0}^{\infty}\left(  \frac{G\left(  X,Z,s\right)  }{X}\psi\left(
Z,s\right)  +\frac{G\left(  X,Z,s\right)  }{Z}\psi\left(  X,s\right)  \right)
dZ\,dX\\
&  -\beta\rho\int\psi\left(  X,s\right)  \left(  H_{1}\left(  X,s\right)
-H_{2}\left(  X,s\right)  \right)  dX\,.
\end{align*}
In exactly the same way we obtain
\begin{align*}
&  \int\psi\left(  X,s\right)  \left(  \mathcal{Q}\left[  H_{1}\right]
\left(  X,s\right)  -\mathcal{Q}\left[  H_{2}\right]  \left(  X,s\right)
\right)  dX\\
&  =\int \left(  H_{1}\left(  X,s\right)  -H_{2}\left(  X,s\right)  \right)
\left(  \int_{0}^{\infty}\psi\left(  X+Z,s\right)  \left(  \frac{G\left(
X,Z,s\right)  }{Z}+\frac{G\left(  X,Z,s\right)  }{X}\right)  dZ\right)\,dX\,.
\end{align*}
In summary,  equation (\ref{T1E3}) can be rewritten as
\[
\begin{split}
   \int&\left( H_{1}(X,{t})  -H_{2}(X,{t}) \right)  \psi(X,{t})  dX-\int\left(  h_{0,1}(  X)
-h_{0,2}( X)  \right)  \psi(X,0)  dX\\
=& \int_0^{{t}}\left[  \int\partial_{s}\psi(X,s) (H_{1}(X,s)  -H_{2}(X,s))  dX\right]  ds\\
&  -\int_{0}^{{t}}\left[  \int  (  H_{1}(X,s) -H_{2}(X,s) )  
\int_{0}^{\infty}\left(  \frac{G(X,Z,s)}{X}\psi(Z,s)  +\frac{G(X,Z,s)}{Z}\psi(X,s)  \right)  dZ\,dX\right]  ds\\
&  +\int_{0}^{{t}}\left[  \int (  H_{1}(X,s)-H_{2}(X,s))  \int_{0}^{\infty}\psi(X+Z,s)
\left(  \frac{G(X,Z,s)}{Z}+\frac{G(X,Z,s)  }{X}\right)  dZ\,dX\right]  ds\\
&-\int_0^t  \beta\rho\int\psi\left(  X,s\right)  \left(  H_{1}\left(  X,s\right)
-H_{2}\left(  X,s\right)  \right)  dX\,dt\,.
\end{split}
\]
Hence, in order to obtain \eqref{psidef}, we choose $\psi$ as the solution of the following equation
\begin{align}
  \partial_{s}\psi\left(  X,s\right)  &=\int_{0}^{\infty}\left(
\frac{G\left(  X,Z,s\right)  }{X}\psi\left(  Z,s\right)  +\frac{G\left(
X,Z,s\right)  }{Z}\psi\left(  X,s\right)  \right)  dZ\nonumber\\
& \quad  -\int_{0}^{\infty}\psi\left(  X+Z,s\right)  \left(  \frac{G\left(
X,Z,s\right)  }{Z}+\frac{G\left(  X,Z,s\right)  }{X}\right)  dZ - \beta\rho\psi\left(  X,s\right)   \,. \label{T1E4}%
\end{align}
with initial value
\begin{equation}
\psi\left(  X,{t}\right)  =\bar{\psi}\left(  X\right)\,.  \label{T1E5}%
\end{equation}
This equation can be solved for any $\lambda>0$ in the class of functions
$\psi\in C^{1}\left(  \left[  0,{t}\right]  :C\left[  0,\infty\right)
\right)$ such that $\sup_{X \geq 0} (1+X) |\psi(X)| < \infty$
(see Lemma \ref{L.adjoint} below).
 and thus \eqref{psidef} holds. 
Due to the decay of $\psi(X,0)$, the fact that $h_{0,1}-h_{0,2} \in \mathcal{X}_{\rho}$ and \eqref{dyadic}
the function $\psi\left(  X,0\right)  $ can be
replaced by a function with compact support and this finishes the proof.
\end{proof}

In the proof of Proposition \ref{weakcontin} we have used the well-posedness
of the problem (\ref{T1E4})-(\ref{T1E5}).

\begin{lemma}
\label{L.adjoint}
Let us define the Banach space $\mathcal{Z}$ as the space of functions
$\varphi\in C\left[  0,\infty\right)  $ satisfying $\left\Vert \varphi
\right\Vert _{\mathcal{Z}}=\sup_{X\geq0}\left(  1+X\right) 
 |\varphi\left(  X\right)|  <\infty.$ For any $\bar \psi\in C_{0}^{1}\left[
0,\infty\right)  $ there exists a unique solution $\psi$ of (\ref{T1E4})-(\ref{T1E5})
 such that $\psi\in C^{1}\left(  \left[  0,{t}\right]
:\mathcal{Z}\right)  .$
\end{lemma}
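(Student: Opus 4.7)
The plan is to solve the backward problem (\ref{T1E4})--(\ref{T1E5}) by Picard iteration in $C([0,t];\mathcal{Z})$. After the change of variables $\tilde\psi(X,\sigma)=\psi(X,t{-}\sigma)$ the equation becomes a forward-in-time evolution $\partial_\sigma\tilde\psi=-\mathcal{L}[\tilde\psi]$ with initial datum $\tilde\psi(\cdot,0)=\bar\psi$, where $\mathcal{L}[\psi](X,s)$ denotes the right-hand side of (\ref{T1E4}). I would rewrite this as the integral equation
\begin{equation*}
\tilde\psi(X,\sigma)=\bar\psi(X)-\int_{0}^{\sigma}\mathcal{L}[\tilde\psi](X,\tau)\,d\tau
\end{equation*}
and, relying on a bound of the form $\|\mathcal{L}[\psi](\cdot,\sigma)\|_{\mathcal{Z}}\leq C_\lambda(T)\,\|\psi(\cdot,\sigma)\|_{\mathcal{Z}}$, deduce existence and uniqueness by a contraction on a short interval followed by iteration up to time $t$. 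Since $\mathcal{L}[\tilde\psi]$ is continuous in $\sigma$ (through $e^{-\beta\sigma}$ and the continuity of $H_1,H_2$ in $\mathcal{X}_\rho$), the equation itself then upgrades the solution to $\tilde\psi\in C^{1}([0,t];\mathcal{Z})$.

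The crucial input is the support structure of $K_\lambda$. By the cutoffs in (\ref{D1}), $K_\lambda(y,z)$ vanishes unless $y,z\geq\lambda/2$ and $y/(y+z),z/(y+z)\geq\lambda/2$, which confines the ratio $y/z$ to a compact subinterval of $(0,\infty)$ depending only on $\lambda$. Consequently the weight $G(X,Z,s)=K_\lambda(Xe^{-\beta s},Ze^{-\beta s})(H_{1}+H_{2})/2$ is supported in the region where $X,Z\geq\lambda/2$ and $c_\lambda X\leq Z\leq C_\lambda X$. Using (\ref{Ass1}), the uniform bound $\|H_k(\cdot,s)\|_{\mathcal{X}_\rho}\leq C(T)$ from (\ref{S1E3a}), and the dyadic estimate (\ref{dyadic}), one obtains
\begin{equation*}
\int_{0}^{\infty}\frac{G(X,Z,s)}{X}\,dZ+\int_{0}^{\infty}\frac{G(X,Z,s)}{Z}\,dZ\leq C_\lambda(T)\,X^{\gamma-\rho}\leq C_\lambda(T),
\end{equation*}
where the final inequality uses $\gamma<\rho$ together with $X\geq\lambda/2$ on the support.

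To close the estimate in the $\mathcal{Z}$-norm I would exploit that on the support of $G(X,\cdot,s)$ both $1+Z$ and $1+(X+Z)$ are comparable to $1+X$ up to $\lambda$-dependent constants. Combined with $|\psi(Y,s)|\leq\|\psi(\cdot,s)\|_{\mathcal{Z}}/(1+Y)$ this gives
\begin{equation*}
(1+X)\left|\int_{0}^{\infty}\frac{G(X,Z,s)}{X}\psi(Z,s)\,dZ\right|\leq C_\lambda(T)\,\|\psi(\cdot,s)\|_{\mathcal{Z}},
\end{equation*}
and the analogous bound holds for the three remaining contributions to $\mathcal{L}[\psi]$, i.e.\ the terms containing $\psi(X,s)$, the translated $\psi(X+Z,s)$, and the pointwise $\beta\rho\psi$-term. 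Summing these produces the required linear bound and closes the contraction. The main technical obstacle is the integrand $G(X,Z,s)/X$: the factor $1/X$ cannot be controlled from the $\mathcal{X}_\rho$-information alone, and it is precisely the ratio cutoff $Z/X\sim 1$ on the support of $K_\lambda$ — which lets one trade $1/X$ for $C_\lambda/Z$ — that makes the weight $(1+X)^{-1}$ recoverable and the whole argument work.
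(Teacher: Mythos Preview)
Your proposal is correct and follows essentially the same approach as the paper's own (very brief) proof: both rely on the cutoff structure of $K_\lambda$---in particular the ratio cutoffs forcing $Z\sim_\lambda X$ on the support of $G$---together with the dyadic estimate (\ref{dyadic}) and $\|H_k\|_{\mathcal{X}_\rho}\leq C(T)$ to show that the linear operator on the right-hand side of (\ref{T1E4}) is bounded from $\mathcal{Z}$ to $\mathcal{Z}$, after which a standard Picard/fixed-point argument gives existence and uniqueness. Your write-up is in fact more explicit than the paper's, which merely asserts that the integral operators map $\mathcal{Z}$ into itself and omits the details.
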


\begin{proof}
Note that due to the growth of $K_{\lambda}$, the function $G$ can in an averaged sense be estimated by $X^{\gamma} Z^{-\rho} +
Z^{\gamma-\rho}$, while $H$ can be estimated in an average sense by $Z^{-\rho}$. Furthermore, recall that $K_{\lambda}$ vanishes if $X,Z \leq \lambda/2$ and
$Z \leq \frac{\lambda}{2-\lambda}X$ etc.. 
Therefore, we can prove as in previous arguments, using \eqref{dyadic}, 
that all the integral operators in \eqref{T1E4} map  $\mathcal{Z}$ into itself. Since \eqref{T1E4} is a linear equation for $\psi$, the statement of the lemma
follows from a standard fixed point argument. We omit the details here. 
\end{proof}

\section{Invariance of the set $\mathcal{Y}$ under the semigroups $S_{\lambda
}\left(  t\right)  .$}
\label{S.invariance}

The next goal is to show that the set $\mathcal{Y}$ defined by means of
(\ref{B1a})-(\ref{B2a}) remains invariant under the action of the semigroup
$S_{\lambda}\left(  t\right)  .$ We first prove the invariance of the upper
estimate (\ref{B1a}).

\subsection{Invariance of the set defined by means of (\ref{B1a}).}

\begin{proposition}\label{P.upperbound}
Suppose that $h_{0}$ satisfies (\ref{B1a}) and that $h\left(  \cdot,t\right)  $ is given 
by (\ref{S1}). Then $h\left(  \cdot,t\right)  $ satisfies (\ref{B1a}) as well.
\end{proposition}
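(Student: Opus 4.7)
The strategy is to reuse the weak formulation for $H = T_\lambda(t) h_0$ together with the rearrangement identity \eqref{rearrange}, and then translate the bound back to $h$ via the change of variables \eqref{D6}. The key observation is that for any non-increasing test function $\psi \geq 0$, the coagulation contribution has a favorable sign, leaving only the linear term $\beta\rho$, which Gronwall handles.

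\textbf{Step 1: Differential inequality for $H$.} I would first take as test function in \eqref{S2} a smooth, non-increasing approximation $\psi_R \in C_0^1([0,\infty))$ of the characteristic function $\chi_{[0,R]}$, taken to be independent of time. Writing out $\mathcal{A}[H] H - \mathcal{Q}[H]$ using \eqref{D7b}--\eqref{D7c} and applying the rearrangement \eqref{rearrange} (which is unaffected by the frozen prefactor $e^{-\beta t}$ in the arguments of $K_\lambda$), the nonlinear contribution becomes
\[
\int \int_0^\infty \frac{K_\lambda(Xe^{-\beta s},Ze^{-\beta s})}{Z} H(Z,s) H(X,s) \bigl[\psi_R(X) - \psi_R(X+Z)\bigr]\,dZ\,dX \geq 0\,,
\]
because $\psi_R$ is non-increasing and $K_\lambda, H \geq 0$. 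Passing this term to the left-hand side of \eqref{S2} with a minus sign, exactly as in the derivation of \eqref{Hest}, I obtain
\[
\int H(X,t)\psi_R(X)\,dX \leq \int h_0(X)\psi_R(X)\,dX + \beta\rho \int_0^t \int H(X,s)\psi_R(X)\,dX\,ds\,.
\]

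\textbf{Step 2: Gronwall and passage to the indicator.} Letting $\psi_R \nearrow \chi_{[0,R]}$, the dominated convergence theorem together with the a priori bound \eqref{S1E3a} gives
\[
\int_0^R H(X,t)\,dX \leq \int_0^R h_0(X)\,dX + \beta\rho \int_0^t \int_0^R H(X,s)\,dX\,ds\,.
\]
By assumption $\int_0^R h_0(X)\,dX \leq R^{1-\rho}$, so Gronwall's inequality applied to $t \mapsto \int_0^R H(X,t)\,dX$ yields
\[
\int_0^R H(X,t)\,dX \leq R^{1-\rho}\, e^{\beta\rho t}\,.
\]

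\textbf{Step 3: Translation back to $h$.} Using the change of variables \eqref{D6}, namely $h(x,t) = H(xe^{\beta t},t)$, a substitution $X = xe^{\beta t}$ gives
\[
\int_0^R h(x,t)\,dx = e^{-\beta t} \int_0^{R e^{\beta t}} H(X,t)\,dX \leq e^{-\beta t}\bigl(R e^{\beta t}\bigr)^{1-\rho} e^{\beta\rho t} = R^{1-\rho}\,,
\]
since the exponents combine as $-\beta t + (1-\rho)\beta t + \beta\rho t = 0$. This is precisely \eqref{B1a} for $h(\cdot,t)$, and since $R\geq 0$ was arbitrary, the proof is complete.

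\textbf{Main obstacle.} No substantial obstacle arises here: all the work was done in Proposition \ref{WPR} and in the rearrangement identity \eqref{rearrange}. The only technical point to be careful with is the approximation of $\chi_{[0,R]}$ by smooth non-increasing test functions (so that the sign of the coagulation term is preserved in the limit) and the verification that the integrals remain finite uniformly in the approximation, which follows directly from \eqref{S1E3a} and \eqref{dyadic}.
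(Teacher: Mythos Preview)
Your proof is correct and follows essentially the same approach as the paper: both use the inequality \eqref{Hest} (which you re-derive in Step~1 via the sign of the coagulation term for non-increasing test functions), apply Gronwall to get $\int_0^R H(X,t)\,dX \leq R^{1-\rho}e^{\beta\rho t}$, and then undo the change of variables \eqref{D6} so that the exponential factors cancel exactly. The paper simply cites \eqref{Hest} from the proof of Proposition~\ref{WPR} rather than repeating the argument, but the content is identical.
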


\begin{proof}
This is a corollary of Proposition \ref{WPR}.
Indeed, taking in \eqref{Hest} as test function (after some approximation procedure), $\psi(X)=\chi_{[0,R]}(X)$, we obtain by 
Gronwall's inequality and the fact that $h_0$ satisfies \eqref{B1a}
\[
 \int_0^R H(X,t)\,dX \leq R^{1{-}\rho} e^{\beta \rho t}\,.
\]
Now a change of variables from $X$ to $x$ gives

\[
\int_0^{Re^{-\beta{t}}}h\left(  x,{t}\right)
dx\leq\left(  Re^{-\beta{t}}\right)  ^{1{-}\rho}\  ,\ \ {t}\geq0\,.
\]
Since  $R$ is arbitrary, it  follows that
\[
\int_0^{R}h\left(  x,{t}\right)  dx\leq  R^{1{-}\rho}\,,\ \qquad \mbox{ for all } R\geq 0 \mbox{ and } {t}\geq0
\]
and the result follows.
\end{proof}

\subsection{The dual problem}\label{S.massflux}

Our next goal is to show that also property (\ref{B2a}) is invariant under the semigroup $S_{\lambda}\left(
t\right)  .$ In order to prove this we derive first a formula that allows us to
compute the change of mass fluxes. More precisely, we compute the rate of
change of $\int\psi\left(  x,t\right)  h\left(  x,t\right)  dx$ for some
particular class of test functions.

Recalling \eqref{rearrange} we find 
that $h$ satisfies 
\begin{equation}\label{T2E1}
 \begin{split}
  \partial_{t}&\left(  \int h\left(  x,t\right)  \psi\left(  x,t\right)
dx\right)   =\int_{0}^{\infty}h\left(  x,t\right)  dx\int_{0}^{\infty}\frac{K\left(
x,z\right)  h\left(  z,t\right)  }{z}\left[  \psi\left(  x+z,t\right)
-\psi\left(  x,t\right)  \right]  dz\\
& \quad -\beta\int x\partial_{x}\psi\left(  x,t\right)  h\left(  x,t\right)
dx+\beta\int\left(  \rho{-}1\right)  h\left(  x,t\right)  \psi\left(
x,t\right)  dx+\int h\left(  x,t\right)  \partial_{t}\psi\left(  x,t\right)
dx \,.
\end{split}\end{equation}

Using Fubini's Theorem we also have the analogous result for weak solutions. 

\begin{lemma}
Suppose that $h$ is a weak solution of (\ref{D2})-(\ref{D3}) in $\left[
0,\infty\right)  \times\left[  0,T\right]  ,$ in the sense of Definition
\ref{def3}. Then for any $\psi\in C_{0}^{1}\left(  \left[  0,\infty\right)
\times\left[  0,T\right]  \right)  $ we have
\begin{equation}\label{S6}
 \begin{split}
  \int h\left(  x,{t}\right) & \psi\left(  x,{t}\right)  dx-\int
h_{0}\left(  x\right)  \psi\left(  x,0\right)  dx-\int_{0}^{{t}}\left[
\int\partial_{s}\psi\left(  x,s\right)  h\left(  x,s\right)  dx\right]
ds\\
&  -\int_{0}^{{t}}\left[  \int h\left(  x,s\right)  \int_{0}^{\infty
}\frac{K_{\lambda}\left(  x,z\right)  }{z}h\left(  z,s\right)  \left[
\psi\left(  x+z,s\right)  -\psi\left(  x,s\right)  \right] \,dz\,dx \right]\,ds\\
&  +\beta\int_{0}^{{t}}\int x\partial_{x}\psi\left(  x,s\right)
h\left(  x,s\right) \, dx\,ds-\beta\left(  \rho{-}1\right)  \int_{0}^{{t}}%
\int\psi\left(  x,s\right)  h\left(  x,s\right)  \,dx\,ds  =0\,.
\end{split}
\end{equation}
\end{lemma}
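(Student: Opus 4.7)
The plan is to derive \eqref{S6} directly from the weak formulation \eqref{S4} by applying the rearrangement identity \eqref{rearrange} pointwise in $s$ inside the time integral, and then combining the two resulting coagulation integrals into the single expression involving $\psi(x+z,s)-\psi(x,s)$.

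More concretely, I would fix $s \in [0,{t}]$ and look at the sum of the third and fourth lines of \eqref{S4}, i.e.\ the difference of the loss and gain integrals. The identity \eqref{rearrange}, applied with $h=h(\cdot,s)$ and $\psi=\psi(\cdot,s)$ (the $s$-dependence and the cutoffs $e^{-\beta s}$ inside $K_{\lambda}$ play no role in the rearrangement, exactly as noted in the proof of Proposition \ref{WPR}), shows that this difference equals
\[
\int h(x,s) \int_0^{\infty} \frac{K_{\lambda}(x,z)}{z} h(z,s) \bigl[\psi(x,s)-\psi(x+z,s)\bigr]\,dz\,dx.
\]
Substituting this back into \eqref{S4} and combining with the unchanged $\partial_s\psi$, transport, and $\rho$-terms gives exactly \eqref{S6}. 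Then I would integrate in $s$ from $0$ to ${t}$; the test-function boundary term $\int h(x,{t})\psi(x,{t})\,dx - \int h_0(x)\psi(x,0)\,dx$ is already present in \eqref{S4} and is inherited unchanged.

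The only real content is the justification that the rearrangement \eqref{rearrange} is legitimate for our class of measures $h(\cdot,s)\in\mathcal{X}_{\rho}$. This is where Fubini's theorem (as flagged by the sentence preceding the lemma) enters. Since $K_{\lambda}(y,z)$ vanishes unless both $y,z \geq \lambda/2$ and $y,z \geq \frac{\lambda}{2}(y+z)$, the effective domain of integration is bounded away from the origin, and on this domain $K_{\lambda}(y,z)\leq C(y^{\gamma}+z^{\gamma})$ by \eqref{Ass1}. Combined with $\psi\in C_0^1$ (so $\psi$ is bounded with compact support) and the dyadic estimate \eqref{dyadic}, which is available because $h(\cdot,s)\in\mathcal{X}_{\rho}$ uniformly on $[0,T]$ by Proposition \ref{WPR}, one gets
\[
\int_0^\infty \int_0^\infty |\psi(x+y,s)|\,\frac{K_{\lambda}(y,z)}{z}\,h(z,s)\,h(y,s)\,dz\,dy < \infty,
\]
and similarly for the loss term. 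This absolute integrability is precisely what legitimizes the interchange of the $y$- and $x$-integrations and the change of variables $z=x-y$ used to pass from the first to the third line of \eqref{rearrange}.

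The main (and essentially only) obstacle is this Fubini justification; once it is in place, the lemma is a direct algebraic rewriting. Since the estimates required are exactly those already used in Lemma \ref{L.WPR} and Proposition \ref{WPR}, no new tool is needed, and the remaining routine bookkeeping (integrating in $s$, using that the test function has compact support to discard boundary contributions at $x=\infty$) completes the argument.
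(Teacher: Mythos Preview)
Your proposal is correct and is exactly the paper's approach: the lemma is obtained from \eqref{S4} by applying the rearrangement identity \eqref{rearrange} to the loss and gain terms (the paper's entire proof is the one sentence ``Using Fubini's Theorem we also have the analogous result for weak solutions''), and you have in addition spelled out the Fubini justification via the cutoff in $K_{\lambda}$ and the $\mathcal{X}_{\rho}$ bounds. One harmless slip: in \eqref{S4} the kernel is $K_{\lambda}(x,z)$, not $K_{\lambda}(xe^{-\beta s},ze^{-\beta s})$, so your parenthetical about ``the cutoffs $e^{-\beta s}$ inside $K_{\lambda}$'' is unnecessary here.
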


Formula (\ref{S6}) suggests a particularly convenient choice of the test
functions $\psi$  as described in the introduction.  More precisely, suppose that we choose $\psi\left(
x,s\right)  $ as the solution of the equation
\begin{equation}
-\partial_{s}\psi\left(  x,s\right)  -\int_{0}^{\infty}\frac{K_{\lambda
}\left(  x,z\right)  }{z}h\left(  z,s\right)  \left[  \psi\left(
x{+}z,s\right)  -\psi\left(  x,s\right)  \right]\,dz  +\beta x\partial_{x}%
\psi\left(  x,s\right)  -\beta\left(  \rho{-}1\right)  \psi\left(  x,s\right)
=0. \label{S7}%
\end{equation}

Then (\ref{S6}) becomes
\begin{equation}
\int h\left(  x,{t}\right)  \psi\left(  x,{t}\right)  dx=\int
h_{0}\left(  x\right)  \psi\left(  x,0\right)  dx\,. \label{S8}%
\end{equation}

In order to check (\ref{B2a}) we need to estimate quantities like $\int
_{0}^{R}h\left(  x,{t}\right)  dx.$ This suggests to study the solutions
of (\ref{S7}) such that $\psi\left(  x,{t}\right)  $ is the characteristic
function of the interval $\left[  0,R\right]  .$ We will next prove that the
solution of such a problem exists and construct a subsolution.

\subsection{Solvability of the dual problem }

We can simplify (\ref{S7}) by a change of variables
\begin{equation}
\psi\left(  x,s\right)  =\exp\left(  -\beta\left(  \rho{-}1\right)  \left(
s-{t}\right)  \right)  \Psi\left(  X,s\right)  \,,\ \ X=xe^{\beta\left(
s-{t}\right)  } \label{T2E2}%
\end{equation}
that transforms (\ref{S7}) into
\begin{equation}
\partial_{s}\Psi\left(  X,s\right)  +\int_{0}^{\infty}\frac{K_{\lambda
}\left(  Xe^{-\beta\left(  s-{t}\right)  },Ze^{-\beta\left(  s-{t}\right) 
 }\right)  }{Z}h\left(  Ze^{-\beta\left(  s-{t}\right)
},s\right)  \left[  \Psi\left(  X{+}Z,s\right)  {-}\Psi\left(  X,s\right)
\right]\,dZ  =0 \label{T2E3}%
\end{equation}
with initial data
\begin{equation}
\Psi\left(  X,{t}\right)  =\chi_{\left[  0,R\right]  }\left(  X\right)\,.
\label{T2E4}%
\end{equation}

\begin{lemma}
 There exists a unique  solution $\Psi \in L^{\infty}(0,\infty) \times [0, t])$
 to \eqref{T2E3}-\eqref{T2E4}. It satisfies $\Psi(X,s)=0$ for all $X>R$ and $s \in [0, t]$.
\end{lemma}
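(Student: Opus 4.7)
My plan is to convert the terminal-value problem \eqref{T2E3}--\eqref{T2E4} into a forward problem by setting $\tau = t-s$, and to solve the resulting linear integro-differential equation
$$\partial_\tau \Psi + a(X,\tau)\Psi = (\mathcal{L}\Psi)(X,\tau), \qquad \Psi(X,0) = \chi_{[0,R]}(X),$$
where $a(X,\tau) := \int_0^\infty \tfrac{K_\lambda(Xe^{-\beta\tau},Ze^{-\beta\tau})}{Z}\,h(Ze^{-\beta\tau},t-\tau)\,dZ$ and $(\mathcal{L}\Psi)(X,\tau)$ is the analogous integral with an additional factor $\Psi(X+Z,\tau)$. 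The natural framework is Banach's fixed point theorem applied to the Duhamel form of the equation in $L^\infty((0,\infty)\times[0,t])$.

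The crucial preliminary step, and the main obstacle, is the uniform bound $a\in L^\infty$; this is the part that relies critically on the regularization. The cutoffs in $K_\lambda$ (see \eqref{D1}) force $Xe^{-\beta\tau},\,Ze^{-\beta\tau}\geq \lambda/2$ as well as $Z \geq \lambda X/(2-\lambda)$. Combining this with the dyadic estimate \eqref{dyadic} and the uniform bound $\|h(\cdot,s)\|_{\mathcal{X}_\rho}\leq C(t)$ from Proposition \ref{WPR}, by exactly the same computation as in the proof of Lemma \ref{L.WPR}, I would obtain $a(X,\tau)\leq C_\lambda$ for $X\leq 1$ and $a(X,\tau)\leq C_\lambda X^{\gamma-\rho}$ for $X\geq 1$. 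Both are bounded because $\rho>\gamma$.

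With $a\in L^\infty$ in hand, set $A(X,\tau)=\int_0^\tau a(X,\xi)\,d\xi\leq \|a\|_\infty t$. Duhamel's formula recasts the problem as $\Psi=\mathcal{F}[\Psi]$ with
$$\mathcal{F}[\Psi](X,\tau) = e^{-A(X,\tau)}\chi_{[0,R]}(X) + \int_0^\tau e^{-(A(X,\tau)-A(X,\tau'))}(\mathcal{L}\Psi)(X,\tau')\,d\tau'.$$
Using $|\mathcal{L}\Psi|\leq a\,\|\Psi\|_\infty$ together with the elementary identity
$$\int_0^\tau a(X,\tau')\,e^{-(A(X,\tau)-A(X,\tau'))}\,d\tau' = 1-e^{-A(X,\tau)},$$
I would obtain the strict contraction estimate $\|\mathcal{F}[\Psi_1]-\mathcal{F}[\Psi_2]\|_\infty \leq (1-e^{-\|a\|_\infty t})\,\|\Psi_1-\Psi_2\|_\infty$ on all of $[0,t]$, which yields a unique solution $\Psi\in L^\infty((0,\infty)\times[0,t])$.

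For the support property, I would observe that on the half-line $\{X>R\}$ the Duhamel equation is closed: whenever $X>R$, the integrand of $\mathcal{L}\Psi$ only samples $\Psi(X+Z,\tau)$ with $X+Z>R$, and the initial datum $\chi_{[0,R]}$ vanishes there. Hence the zero function solves the restricted Duhamel problem on $\{X>R\}$, and by the uniqueness just established (applied on this half-line) one concludes $\Psi(X,\tau)=0$ for all $X>R$ and $\tau\in[0,t]$. Apart from the $L^\infty$ estimate on $a$, every step is essentially routine; without the regularization in \eqref{D1} the coefficient $a$ would fail to be bounded near $X=0$ for measures $h$ that concentrate at the origin, and a significantly more delicate weighted framework would be required.
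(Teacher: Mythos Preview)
Your proof is correct and follows essentially the same approach as the paper, which simply says ``$K_\lambda$ vanishes for small $Z$, hence well-posedness follows by a standard fixed point argument.'' You have supplied the details of that standard argument, and your use of the identity $\int_0^\tau a(X,\tau')e^{-(A(X,\tau)-A(X,\tau'))}\,d\tau'=1-e^{-A(X,\tau)}$ to obtain a global contraction constant $1-e^{-\|a\|_\infty t}<1$ on all of $[0,t]$ (rather than a short-time argument plus iteration) is a clean touch. One inconsequential slip: since $s-t=-\tau$, the exponentials in your definition of $a$ and $\mathcal{L}$ should read $e^{\beta\tau}$, not $e^{-\beta\tau}$; the argument is unaffected because the exponent is bounded on $[0,t]$ either way, so the cutoffs in $K_\lambda$ remain effective.
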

\begin{proof}
Recall that $K_{\lambda}$ vanishes for small $Z$. Hence the well-posedness of the equation in the class of bounded functions
follows by a standard fixed point argument.
\end{proof}

To derive more quantitative information about the solutions of
(\ref{T2E3})-(\ref{T2E4}) we
construct a suitable subsolution for (\ref{T2E3})-(\ref{T2E4}).

For the following we write $\tau =t-s$,  $\tilde \Psi(X,\tau)=\Psi(X,s)$ and 
\[
Q\left(  X,Z,\tau\right)  =\frac{K_{\lambda}\left(  Xe^{-\beta\left(  s-
{t}\right)  },Ze^{-\beta\left(  s-{t}\right)  }\right)  }{Z}h\left(
Ze^{-\beta\left(  s-{t}\right)  },s\right)\,,
\]
such that
\begin{equation}\label{T2E7}
 \partial_\tau \tilde \Psi(X,\tau) - \int_0^{\infty} Q(X,Z,\tau) \big[\tilde \Psi(X+Z,\tau) - \tilde \Psi(X,\tau)\big]\,dZ =0\,, \qquad 
\tilde \Psi(X,0)=\chi_{[0,R]}(X)\,.
\end{equation}

Due to the properties of $h$ and \eqref{dyadic} we have for $0\leq \tau\leq1$ and $X \leq R$ that
\begin{equation}
\int_{R}^{\infty}Q\left(  X,Z,\tau\right)  dZ\leq K  R^{\gamma{-}\rho}\label{T2E6}%
\end{equation}
and this  motivates to consider the auxiliary problem 
\begin{equation}
\partial_{\tau}\hat{\psi}\left(  X,\tau\right)  -\int_{0}^{\infty
}\frac{1}{Z^{1+\rho-\gamma}}\left[ \hat{ \psi}\left(  X+Z,\tau\right)
-\hat{\psi}\left(  X,\tau\right)  \right]\,dZ  =0\,,\ \ \ \hat{\psi}\left(  X,0\right)
=\chi_{[0,R]  }\left(  X\right) \,. \label{T2E8}%
\end{equation}

It is natural to look for self-similar solutions of (\ref{T2E8}). Note that
$\psi\left(  X,\tau\right)  =0$ if $X\geq R.$ We change variables as 
\[
a=\rho-\gamma>0\,, \qquad Y = \frac{R-X}{\tau^{\frac{1}{a}}}, \qquad Z=\tau^{\frac{1}{a}} \eta
\]
and look for solutions of the form $ \hat \psi(X,\tau)=W(Y)$.
Then $W$ solves 
\begin{equation}
\frac{1}{a}Y W'(Y)=\int_{0}^{\infty}\frac{1}{\eta^{1+a}}\left[
W\left(  Y\right)  -W\left(  Y-\eta\right)  \right]\,d\eta \,, \qquad Y \in (0,\infty)\,, \label{T3E4}%
\end{equation}
and
\begin{equation}\label{T3E4b}
W\left(  Y\right)  =0\qquad \mbox{ for }  Y\leq 0\,,\qquad  W\left(  Y\right)
\rightarrow 1\text{ as }Y\rightarrow \infty\,.
\end{equation}

\begin{proposition}\label{P.W}
 There exists a unique positive solution to \eqref{T3E4}-\eqref{T3E4b}. It is increasing and satisfies 
\begin{equation}\label{Wdecay}
W'(Y) \sim \frac{C}{Y^{1+a} } \qquad \mbox{ as } Y \to \infty\,.
\end{equation}
\end{proposition}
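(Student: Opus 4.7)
The plan is to reduce \eqref{T3E4} to a convolution equation for $U := W'$ and then identify the solution explicitly via Laplace transform. Splitting the integral at $\eta = Y$ (so that $W(Y-\eta) = 0$ for $\eta > Y$) and writing $W(Y) - W(Y-\eta) = \int_0^\eta U(Y-t)\,dt$, a Fubini interchange produces
\[
\int_0^Y \frac{W(Y) - W(Y-\eta)}{\eta^{1+a}}\,d\eta \;=\; \frac{1}{a}\int_0^Y U(Y-t)\,t^{-a}\,dt \;-\; \frac{W(Y)}{a Y^a},
\]
while the integral over $(Y,\infty)$ contributes exactly $W(Y)/(aY^a)$. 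The two boundary terms cancel and \eqref{T3E4} collapses to the clean convolution identity
\[
Y\,U(Y) \;=\; (k \ast U)(Y), \qquad k(t) := t^{-a}.
\]

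Applying the Laplace transform and using $\mathcal{L}[YU](p) = -\hat U'(p)$, the standard formula $\mathcal{L}[t^{-a}](p) = \Gamma(1-a)\,p^{a-1}$ (valid because $a \in (0,1)$), and the convolution theorem converts this into the first order linear ODE $-\hat U'(p) = \Gamma(1-a)\,p^{a-1}\,\hat U(p)$, whose general solution is $\hat U(p) = C \exp\bigl(-\tfrac{\Gamma(1-a)}{a}\,p^a\bigr)$. The boundary condition $W(\infty) = 1$ forces $\hat U(0^+) = 1$, hence $C = 1$. This is precisely the Laplace transform of the density of a positive $a$-stable law, so by the classical theory of one-sided stable distributions the inverse $U$ is nonnegative, integrates to $1$, and satisfies the heavy-tail asymptotic $U(Y) \sim C_a Y^{-1-a}$ as $Y \to \infty$. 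Setting $W(Y) := \int_0^Y U(s)\,ds$ then yields a strictly increasing continuous solution of \eqref{T3E4}--\eqref{T3E4b} with the desired asymptotic \eqref{Wdecay}.

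The main obstacle I anticipate is that \emph{a priori} neither monotonicity of $W$ nor integrability of $W'$ is known on an arbitrary candidate solution, so the Laplace transform cannot be applied directly. The cleanest way to handle this is to \emph{construct} $W$ via the explicit inverse Laplace transform above, since positivity, monotonicity, and the power-law tail come for free from well-established properties of one-sided stable densities, and then verify the equation by direct substitution (Fubini reverses the reduction). Uniqueness then follows by showing, via a comparison argument exploiting the positivity of the kernel $k$ and the structure of \eqref{T3E4}, that any positive solution satisfying \eqref{T3E4b} is monotone and bounded, hence has an $L^1$ derivative whose Laplace transform must solve the same ODE with the same normalization at $p = 0^+$.
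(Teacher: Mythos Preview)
Your approach is correct and, at its core, the same as the paper's: solve the equation via Laplace transform and read off the tail behaviour. The paper transforms $W$ directly, obtaining $\hat W(p)=\frac{1}{p}\exp\bigl(-\tfrac{\Gamma(1-a)}{a}p^a\bigr)$, and then inverts by contour integration to produce explicit integral formulas for $W$ and $W'$ from which \eqref{Wdecay} is read off by inspection. Your reduction to the convolution identity $Y\,U(Y)=(k\ast U)(Y)$ with $k(t)=t^{-a}$ is a clean intermediate step and leads to exactly the same transform, since $\hat W(p)=\hat U(p)/p$.

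The one genuine difference is what you do after identifying $\hat U(p)=\exp\bigl(-\tfrac{\Gamma(1-a)}{a}p^a\bigr)$: you recognise this as the Laplace transform of a one-sided $a$-stable density and import positivity, total mass $1$, and the $Y^{-1-a}$ tail from the classical theory of stable laws, whereas the paper carries out the inverse Laplace transform by hand via a contour deformation. Your route is more conceptual and economical (the asymptotics and positivity come for free from well-known facts), while the paper's route is more self-contained and yields explicit integral representations that could in principle be used for sharper quantitative statements. On uniqueness both arguments are comparably sketchy; your plan of first constructing the solution and then arguing that any positive bounded solution has a Laplace-transformable derivative satisfying the same ODE with the same normalisation is adequate, and matches in spirit what the paper does when it says the solution of the transformed ODE is ``uniquely determined'' by the boundary condition at infinity.
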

\begin{proof}
We can solve \eqref{T3E4}-\eqref{T3E4b} explicitly via Laplace transform. Indeed, the Laplace transform  
$\hat W(p)= \int_0^\infty e^{-Yp} W(Y)\,dY$ of $W$ solves
\[
 - \frac{1}{a} \big( \hat W(p) + p {\hat W}'(p)\big) = \hat W(p) p^a \int_0^{\infty} \frac{1-e^{-z}}{z^{1+a}}\,dz =
\hat W(p) p^a \frac{\Gamma(1{-}a)}{a}\,,
\]
that is
\begin{equation}\label{Wlaplace}
 {\hat W}'(p) = - \hat W(p) \frac{1}{p} \Big( 1 + p^a \Gamma(1{-}a)\Big)\,.
\end{equation}
Together with the constraint that $1-W(Y) \to 0 $ as $Y \to \infty$, the solution is uniquely determined and given by 
$\hat W(p) = \frac{1}{p} e^{-\frac{\Gamma(1{-}a)}{a} p^a}$. Now $W(Y)$ can be computed using the inverse Laplace transform together
with contour integration. We obtain
\[
 W(Y) =1+ \frac{1}{\pi} \int_0^{\infty} \frac{1}{p} e^{-Yp -\frac{\Gamma(1{-}a)}{a} p^a \cos(\pi a)} 
\sin\Big( \frac{\Gamma(1{-}a)}{a} \sin(\pi a) p^a\Big) \,dp
\]
and
\[
 W'(Y) = -\frac{1}{\pi Y} \int_0^{\infty} e^{-z - \frac{\Gamma(1{-}a)}{a} (\frac{z}{Y})^a \cos(\pi a)}
\sin \Big ( \frac{\Gamma(1{-}a)}{a}\sin(\pi a) \frac{z^a}{Y^a}\Big) \,dz\,.
\]
From this the decay behaviour \eqref{Wdecay} follows immediately.
\end{proof}

\subsection{Comparison argument }

\begin{lemma}\label{L.comparison}
 Let $\tilde \Psi(X,\tau)$ be the solution of \eqref{T2E7} and $\hat \psi(X,\tau)$ be the solution of 
\begin{equation}
\partial_{\tau}\hat \psi\left(  X,\tau\right)  -M\int_{0}^{\infty
}\frac{dZ}{Z^{1+\rho-\gamma}}\left[  \hat\psi\left(  X+Z,\tau\right)
-\hat\psi\left(  X,\tau\right)  \right]  =0\, ,\ \ \ \hat\psi\left(  X,0\right)
=\chi_{[0,R] }\left(  X\right)  \ \label{T3E2}%
\end{equation}
with $M>0.$ Then $\tilde \Psi(X,\tau)\geq\hat \psi(X,\tau)$ for all $X,\tau\geq 0$ if $M$ is sufficiently large.
\end{lemma}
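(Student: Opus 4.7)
The plan is to prove that $\hat\psi$ is a subsolution of \eqref{T2E7} for $M$ sufficiently large, and then to invoke a comparison principle for nonlocal transport equations. Since $\tilde\Psi$ and $\hat\psi$ both start at $\chi_{[0,R]}$ and both vanish for $X>R$, it suffices to work on $X\in[0,R]$. First I would set $U:=\tilde\Psi-\hat\psi$ and subtract \eqref{T3E2} from \eqref{T2E7} to obtain
\begin{equation*}
\partial_{\tau}U=\mathcal{L}_Q U+G,\qquad U(\cdot,0)\equiv 0,
\end{equation*}
where $\mathcal{L}_Q f(X):=\int_0^\infty Q(X,Z,\tau)[f(X+Z)-f(X)]\,dZ$ and
\begin{equation*}
G(X,\tau):=\int_0^\infty\Big[MZ^{-(1+\rho-\gamma)}-Q(X,Z,\tau)\Big]\big[\hat\psi(X,\tau)-\hat\psi(X+Z,\tau)\big]\,dZ.
\end{equation*}
By Proposition \ref{P.W} the function $W$ is increasing, so $\hat\psi$ is nonincreasing in $X$ and the second bracket in $G$ is nonnegative. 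If $G\geq 0$, Duhamel's formula applied to $\partial_\tau U=\mathcal{L}_Q U+G$ yields $U\geq 0$, since the evolution family generated by $\mathcal{L}_Q$ has a nonnegative integral kernel and therefore preserves positivity.

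The analytic heart of the proof is to establish $G\geq 0$ for $M$ large. Using the smoothness of $W$ on $(0,\infty)$ to write $\hat\psi(X)-\hat\psi(X+Z)=\int_0^Z|\hat\psi'(X+u)|\,du$ and applying Fubini gives
\begin{equation*}
G(X,\tau)=\int_0^{R-X}|\hat\psi'(X+u)|\left[\tfrac{M}{\rho-\gamma}u^{\gamma-\rho}-\int_u^\infty Q(X,Z,\tau)\,dZ\right]du,
\end{equation*}
and it then suffices to show that the bracket is pointwise nonnegative in $u$. The decisive point is the cut-off built into $K_\lambda$ through \eqref{D1}: $K_\lambda(y,z)=0$ unless $\lambda/(2-\lambda)\leq y/z\leq(2-\lambda)/\lambda$. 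After the substitution $z=Ze^{-\beta\tau}$ this forces the integrand in $\int_u^\infty Q(X,Z,\tau)\,dZ$ to be supported on the bounded interval $Z\in[c_\lambda X,C_\lambda X]$ with $c_\lambda=\lambda/(2-\lambda)$ and $C_\lambda=(2-\lambda)/\lambda$. Combining \eqref{Ass1} on these comparable arguments with the mass bound \eqref{B1a} for $h$ on $[c_\lambda Xe^{-\beta\tau},C_\lambda Xe^{-\beta\tau}]$ then produces the uniform estimate
\begin{equation*}
\int_u^\infty Q(X,Z,\tau)\,dZ\leq C_\lambda'\,e^{(\rho-\gamma)\beta\tau}X^{\gamma-\rho}.
\end{equation*}
On the support $u\leq C_\lambda X$ of the nontrivial tail one has $u^{\gamma-\rho}\geq C_\lambda^{\gamma-\rho}X^{\gamma-\rho}$ (using $\gamma-\rho<0$), so $\tfrac{M}{\rho-\gamma}u^{\gamma-\rho}$ dominates the $Q$-tail once $M$ is chosen large; outside the support $\int_u^\infty Q\,dZ=0$ and the bracket is trivially positive.

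The main obstacle is precisely the tail comparison above, and the indispensable ingredient is the $K_\lambda$ cut-off that forces $y$ and $z$ to be comparable. Without it, the bound on $\int_u^\infty Q\,dZ$ derived from \eqref{Ass1} alone would carry an extra $X^\gamma u^{-\rho}$ correction, and one would then need to rely on the explicit self-similar form of $\hat\psi$ from Proposition \ref{P.W} to absorb that correction. The $\lambda$-dependence of the threshold for $M$ that results from the above estimates is acceptable here, since only the regularised problem is being compared; uniformity in $\lambda$ will be recovered later when the limit $\lambda\to 0$ is taken.
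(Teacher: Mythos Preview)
Your argument is essentially the same as the paper's: both reduce to checking that $\hat\psi$ is a subsolution of \eqref{T2E7} and then appeal to a comparison principle. Your Duhamel/positivity-of-semigroup formulation is equivalent to the paper's direct verification of \eqref{T3E5}; and your Fubini identity for $G$ is precisely the paper's integration-by-parts step (your bracketed term $\tfrac{M}{a}u^{-a}-\int_u^\infty Q\,dZ$ is, after the rescaling $u=\theta\eta$, the paper's $\tfrac{M}{a}\eta^{-a}-G(\eta)$). One minor slip: your time factor should be $e^{(\gamma-\rho)\beta\tau}\le 1$, not $e^{(\rho-\gamma)\beta\tau}$, though this does not affect the conclusion on a bounded time interval.

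Where the two write-ups diverge is in how the tail of $Q$ is controlled. The paper phrases it as a scaling bound $\int_{R'}^{\infty}\Omega\,dZ\le C(R')^{-a}$ for \emph{all} $R'>0$ and then uses $G(\eta)\le C\eta^{-a}$ pointwise; you instead bound the full integral $\int_u^\infty Q\,dZ\le C_\lambda' X^{\gamma-\rho}$ uniformly in $u$ on the support and compare at the worst $u=C_\lambda X$. Both routes ultimately rely on the comparability cut-off in $K_\lambda$ to replace $x^\gamma+z^\gamma$ by a multiple of $z^\gamma$, so both yield a $\lambda$-dependent threshold for $M$. Your final paragraph, however, is too casual: the constant $M$ feeds directly into the constant in Proposition~\ref{P.lowerbound} and hence into the admissible $R_0$ in \eqref{B2a}, and the invariant set $\mathcal{Y}$ must be fixed \emph{independently} of $\lambda$ for the limiting argument in Proposition~\ref{P.main} to retain the lower bound. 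So ``uniformity in $\lambda$ will be recovered later'' is not automatic and deserves a separate argument (or a sharper, $\lambda$-free tail estimate) rather than a dismissal.
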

\begin{proof}
Since the constant $M$ can be absorbed into the time scale, it
follows that the solution of problem \eqref{T3E2}  can be written as
\[
\hat \psi\left(  X,\tau\right)  =W(Y) 
\,,\qquad \mbox{ with } \quad Y = \frac{R-X}{\big (M\tau\big)^{\frac{1}{a}}}\,,
\]
where $W$ solves (\ref{T3E4}). Since $\hat \psi(X,0)=\Psi(X,0)=\chi_{[0,R]}(X)$ it remains to show that 
\begin{equation}
\partial_{\tau}\hat \psi\left(  X,\tau\right)  \leq\int_{0}^{\infty} Q\left(
X,Z,\tau\right)  \left[  \hat\psi\left(  X+Z,\tau\right)  -\hat\psi\left(  X,\tau\right)
\right]  \,dZ \label{T3E5}%
\end{equation}
for a sufficiently large  $M$.
Using $\hat \psi\left(  X,\tau\right)
 =W\left(  Y\right)$
we find that (\ref{T3E5}) reduces to
\[
 \frac{1}{a}\frac{YW^{\prime}\left(  Y\right)  }{\tau}\geq\int_{0}^{\infty
}Q\left(  X,Z,\tau\right)  \left[   W\left(  \frac{R-X }{\left(  M\tau\right)  ^{\frac
{1}{a}}}\right) -  W\left(  \frac{R-(X+Z)
}{\left(  M\tau\right)  ^{\frac{1}{a}}
}\right)\right]\,dZ \,.
\]

By (\ref{T3E4}) we obtain that this inequality is equivalent to
\[
\begin{split}
\frac{1}{\tau}\int_{0}^{\infty}\frac{1}{\eta^{1+a}}& \left[  W(Y)  {-}W(Y{-}\eta)  \right]\,d\eta\\
&  \geq\int_{0}^{\infty} Q\left(
X,Z,\tau\right)  \left[  W\left(  \frac{\left(  R{-}X\right)  }{\left(
M\tau\right)  ^{\frac{1}{a}}}\right)  -W\left(
\frac{\left(  R{-}(X{+}Z)\right)  }{\left(  M\tau\right)  ^{\frac{1}{a}}}\right)  \right]\,dZ\\
&\geq\left(  M\tau\right)  ^{\frac
{1}{a}} \int_{0}^{\infty}  Q\left(  R-\left(
M\tau\right)  ^{\frac{1}{a}} Y,\left(  M\tau\right)  ^{\frac
{1}{a}}\eta,\tau\right)  \left[  W\left(  Y\right)
-W\left(  Y-\eta\right)  \right]\,d\eta
\end{split}
\]
The inequality is trivially valid for $Y \leq 0$. For $Y\geq 0$, that is $X \leq R$, note that (see (\ref{T2E6}))  
\begin{equation}
Q\left(  X,Z,\tau\right)  \leq   \Omega\left(
Z,\tau\right)  \,,\qquad \int_{R}^{\infty}\Omega\left(  Z,\tau\right)  dZ\leq
C  R^{\gamma-\rho}=\frac{C}{R^{a}}\,.\label{T3E6}%
\end{equation}

Therefore, it is sufficient to prove that
\[
\begin{split}
\frac{1}{\tau}&\int_{0}^{\infty}\frac{1}{\eta^{1+a}}\left[  W\left(
Y\right)  -W\left(  Y-\eta\right)  \right] \,d\eta \\
&\geq\left(  M\tau\right)  ^{\frac
{1}{a}} \int_{0}^{\infty}%
\Omega\left(  \left(  M\tau\right)  ^{\frac{1}{a}}
\eta,\tau\right)  \left[  W\left(  Y-\eta\right)  -W\left(  Y\right)  \right]
d\eta\,.
\end{split}
\]
Using $\theta = (M\tau)^{\frac{1}{a}}$, we rewrite the previous inequality as 
\[
M\int_{0}^{\infty}\frac{d\eta}{\eta^{1+a}}\left[  W\left(  Y\right)
-W\left(  Y{-}\eta\right)  \right]     \geq\theta^{1+a}\int_{0}^{\infty}%
\Omega\left(  \theta\eta,\tau\right)  \left[  W\left(  Y\right)  -W\left(
Y{-}\eta\right)  \right]  d\eta
\]
Due to the scaling properties in  \eqref{T3E6} it is sufficient to  check the inequality
\[
\int_{0}^{\infty}F\left(  \eta\right)  \left[  W\left(  Y\right)
-W\left(  Y{-}\eta\right)  \right]  d\eta\leq M\int_{0}^{\infty}\frac{1}%
{\eta^{1+a}}\left[  W\left(  Y\right)  -W\left(  Y{-}\eta\right)  \right]\,d\eta
\,,\,\,Y>0\,,
\]
where%
\[
\int_{R}^{\infty}F\left(  Y\right)  dY\leq\frac{C}{R^{a}}\,.
\]
To check this we write
$G\left(  \eta\right)  =\int_{\eta}^{\infty}F\left(  \sigma\right)  d\sigma$, such that we are left to show that 
\[
\int_{0}^{\infty}-\frac{dG\left(  \eta\right)  }{d\eta} \left[  W\left(
Y\right)  -W\left(  Y{-}\eta\right)  \right]  d\eta\leq\frac{M}{a}\int
_{0}^{\infty} -\frac{d}{d\eta}\left(  \frac{1}{\eta^{a}}\right)  \left[
W\left(  Y\right)  -W\left(  Y{-}\eta\right)  \right]\,d\eta\,,\,\, Y>0\,,
\]
that is equivalent to
\[
\int_{0}^{\infty}G\left(  \eta\right)  \frac{d}{d\eta}\left[  W\left(
Y\right)  -W\left(  Y{-}\eta \right)  \right]  d\eta  \leq \frac{M}{a}\int
_{0}^{\infty}\frac{1}{\eta^{a}}\frac{d}{d\eta}\left[  W\left(
Y\right)  -W\left(  Y{-}\eta\right)  \right]\,d\eta\,, \,\, Y >0\,,
\]
or
\[
 \int_{0}^{\infty}G\left(  \eta\right)  W^{\prime}\left(  Y{-}\eta\right)  d\eta
 \leq\frac{M}{a}\int_{0}^{\infty}\frac{1}{\eta^{a}}W^{\prime}\left(
Y-\eta\right)\,d\eta\,,\, \,Y >0\,.
\]
Since $G\left(  \eta\right)  \leq\frac{C}{\eta^{a}}$ and $W$ is increasing this inequality is satisfied and the proof is finished.
\end{proof}

\begin{corollary}
Let $\Psi(X,s)$ be the solution of \eqref{T2E3}. Then
 \begin{equation}
\Psi\left(  X,s\right)  \geq W\left(  \frac{R-X}{\left(  M(t - s)\right)
^{\frac{1}{a}}}\right) \qquad \mbox{ for all } s \in [0, t]\,.  \ \label{T4E1}%
\end{equation}%
\end{corollary}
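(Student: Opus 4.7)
The plan is to obtain this estimate as a direct consequence of the comparison Lemma~\ref{L.comparison} combined with the explicit self-similar form of the subsolution constructed in Proposition~\ref{P.W}. Since all the serious work has already been carried out, the proof is essentially a matter of combining the pieces and unwinding the changes of variables.

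First I would invoke Lemma~\ref{L.comparison}, which gives, for sufficiently large $M$, the pointwise inequality $\tilde{\Psi}(X,\tau) \geq \hat{\psi}(X,\tau)$ for all $X,\tau\geq 0$, where $\hat{\psi}$ is the solution of the simplified problem \eqref{T3E2} with the same initial data $\chi_{[0,R]}$. Next, I would recall from the proof of that lemma (and from the self-similar ansatz used just before Proposition~\ref{P.W}) that $\hat{\psi}$ admits the explicit representation
\[
\hat{\psi}(X,\tau) = W\!\left(\frac{R-X}{(M\tau)^{1/a}}\right),
\]
where $W$ is the unique monotone solution of \eqref{T3E4}--\eqref{T3E4b} provided by Proposition~\ref{P.W}. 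The constant $M$ appears in the time-rescaling precisely because it absorbs the coefficient in \eqref{T3E2}.

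Finally, I would undo the change of variables that defined $\tilde{\Psi}$, namely $\tau = t-s$ and $\tilde{\Psi}(X,\tau) = \Psi(X,s)$. Substituting $\tau = t-s$ into the lower bound above yields
\[
\Psi(X,s) = \tilde{\Psi}(X,t-s) \geq \hat{\psi}(X,t-s) = W\!\left(\frac{R-X}{(M(t-s))^{1/a}}\right)
\]
for every $s \in [0,t]$, which is exactly \eqref{T4E1}.

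There is no real obstacle here: the heart of the argument is already encapsulated in Lemma~\ref{L.comparison}, whose proof reduces the nonlocal comparison to an inequality between the tail $G(\eta) \leq C\eta^{-a}$ of the kernel and the explicit power $\eta^{-a}$ against which $W'$ is integrated. The corollary itself is therefore essentially a one-line consequence, with the only point of care being the bookkeeping between the variables $(X,s)$ used for $\Psi$ and $(X,\tau)$ used for $\tilde{\Psi}$ and $\hat{\psi}$.
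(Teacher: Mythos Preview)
Your proposal is correct and matches the paper's approach: the corollary is stated without proof in the paper precisely because it is an immediate consequence of Lemma~\ref{L.comparison} together with the explicit self-similar form $\hat\psi(X,\tau)=W\big((R-X)/(M\tau)^{1/a}\big)$ and the change of variables $\tau=t-s$, $\tilde\Psi(X,\tau)=\Psi(X,s)$. You have correctly unpacked exactly these steps.
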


\subsection{Proof of the invariance of (\ref{B2a})}\label{S.lowerbound}

\begin{proposition}\label{P.lowerbound}
Suppose that $h_{0}$ satisfies (\ref{B1a}) and (\ref{B2a}) with $0<\delta < \rho-\gamma$ and
sufficiently large $R_0$ and that $h\left(  \cdot,t\right)  $ is given 
by (\ref{S1}). Then,  $h\left(  \cdot,t\right)  $ satisfies (\ref{B2a}) as well.
\end{proposition}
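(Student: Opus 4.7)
The strategy is to exploit the duality identity \eqref{S8} together with the subsolution provided by \eqref{T4E1} so as to reduce the lower bound on $\int_0^R h(x,t)\,dx$ to the lower bound on the distribution function of $h_0$. Fix $R>0$ and $t>0$. We let $\psi$ solve the backward dual problem \eqref{S7} on $[0,t]$ with terminal data $\psi(x,t)=\chi_{[0,R]}(x)$; well-posedness of this Cauchy problem follows by approximating $\chi_{[0,R]}$ by smooth cutoffs in $C^1_0$ and invoking Lemma \ref{L.adjoint} in the $(X,s)$ variables of \eqref{T2E2}, then passing to the limit. For such $\psi$ identity \eqref{S8} reads
\begin{equation*}
\int_0^R h(x,t)\,dx \,=\, \int_0^\infty h_0(x)\,\psi(x,0)\,dx \, .
\end{equation*}

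Next, I would apply the Corollary to Lemma \ref{L.comparison}. Changing variables back via \eqref{T2E2}, the inequality \eqref{T4E1} at $s=0$ translates into
\begin{equation*}
\psi(x,0) \,\geq\, e^{-\beta(1-\rho)t}\, W\!\Bigl(\frac{R-xe^{-\beta t}}{(Mt)^{1/a}}\Bigr) \, ,\qquad a=\rho-\gamma \, ,
\end{equation*}
where the constant $M$ is chosen according to Lemma \ref{L.comparison}; thanks to Proposition \ref{P.upperbound} the bound \eqref{B1a} is preserved, so the estimate \eqref{T2E6} and hence $M$ are controlled uniformly in $\lambda$ (and boundedly in $t$ on any bounded interval, say $t\leq 1$). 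For larger $t$ I would conclude by iterating using the semigroup property, so it suffices to treat $t\in(0,t_0]$ with $t_0$ small.

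With this pointwise lower bound for $\psi(x,0)$, I would integrate by parts in $x$ to shift the derivative onto $W$ and introduce the distribution function $F_0(s)=\int_0^s h_0(y)\,dy$; the boundary contributions vanish since $F_0(0)=0$ and $W$ vanishes past $R/(Mt)^{1/a}$. A change of variable $Y=(R-xe^{-\beta t})/(Mt)^{1/a}$ then yields
\begin{equation*}
\int_0^R h(x,t)\,dx \,\geq\, e^{-\beta(1-\rho)t}\!\int_0^{R/(Mt)^{1/a}}\!F_0\!\bigl(e^{\beta t}(R-(Mt)^{1/a}Y)\bigr)\,W'(Y)\,dY \, .
\end{equation*}
Insert the assumed lower bound $F_0(S)\geq S^{1-\rho}(1-R_0^\delta S^{-\delta})_+$. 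For the leading piece $S^{1-\rho}$, use the concavity estimate $(1-\sigma)^{1-\rho}\geq 1-(1-\rho)\sigma$ together with the decay $1-W(Z)=O(Z^{-a})$ from Proposition \ref{P.W} and the bound $\int_0^Z Y\,W'(Y)\,dY \leq C Z^{1-a}$ (finite because $a<1$); for the correction piece $-R_0^\delta S^{1-\rho-\delta}$ use $(1-\sigma)^{1-\rho-\delta}\leq 1$ (valid since $\delta<\rho-\gamma<1-\rho+(1-\gamma)$, and in fact $\delta$ may be taken small enough that $1-\rho-\delta>0$). The prefactor $e^{-\beta(1-\rho)t}$ cancels the leading $e^{\beta(1-\rho)t}R^{1-\rho}$, and collecting the two corrections gives
\begin{equation*}
\int_0^R h(x,t)\,dx \,\geq\, R^{1-\rho}\Bigl(1-\frac{R_0^\delta e^{-\beta\delta t}}{R^\delta}-\frac{C_1 Mt}{R^a}\Bigr) \, .
\end{equation*}

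The main obstacle, and the heart of the argument, is to show that the coagulation-induced error $C_1 Mt/R^a$ can be absorbed into the transport-induced improvement $R_0^\delta(1-e^{-\beta\delta t})/R^\delta$, so as to recover \eqref{B2a}. This amounts to
\begin{equation*}
\frac{C_1 Mt}{R^a}\,\leq\, \frac{R_0^\delta(1-e^{-\beta\delta t})}{R^\delta} \, ,
\end{equation*}
which for small $t$ reduces to $R^{a-\delta}\geq 2C_1 M/(\beta\delta R_0^\delta)$. The crucial quantitative input is the strict inequality $\delta<a=\rho-\gamma$ in the hypothesis, which makes the left-hand side grow with $R$; consequently choosing $R_0$ so that $R_0^a\geq 2C_1 M/(\beta\delta)$ makes the inequality hold for every $R\geq R_0$. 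For $R<R_0$ the lower bound in \eqref{B2a} vanishes and there is nothing to prove. Iterating in $t$ via the semigroup property then establishes invariance of \eqref{B2a} for all $t\geq 0$.
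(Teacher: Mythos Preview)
Your approach is essentially the same as the paper's: use the duality identity \eqref{S8}, plug in the change of variables \eqref{T2E2} together with the subsolution bound \eqref{T4E1}, integrate by parts to replace $h_0$ by its distribution function, insert the assumed lower bound \eqref{B2a}, expand for $Y$ in the range where $(Mt)^{1/a}Y/R\le\tfrac12$, and finally absorb the coagulation error $Ct/R^a$ into the transport gain $(R_0/R)^\delta\beta\delta t/2$ using $\delta<a$ and $R_0$ large. Your remark about restricting to small $t$ and then iterating via the semigroup is correct and makes explicit something the paper leaves implicit (the expansion $1-e^{-\beta\delta t}\ge\tfrac12\beta\delta t$ and the bound \eqref{T2E6} both require $t$ bounded).

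One small slip: the inequality $(1-\sigma)^{1-\rho}\ge 1-(1-\rho)\sigma$ that you call a ``concavity estimate'' goes the wrong way, since $(1-\sigma)^{1-\rho}$ is concave and therefore lies \emph{below} its tangent at $0$. What you actually need (and what suffices) is the cruder bound $(1-\sigma)^{1-\rho}\ge 1-C\sigma$ on $\sigma\in[0,\tfrac12]$, which follows from Taylor's theorem or simply from $(1-\sigma)^{1-\rho}\ge 1-\sigma$. Likewise, after writing $S=e^{\beta t}R(1-\sigma)$ the correction term in $F_0(S)$ involves the factor $(1-\sigma)^{-\delta}$ rather than $(1-\sigma)^{1-\rho-\delta}$; you want the upper bound $(1-\sigma)^{-\delta}\le 1+C\sigma$ on $[0,\tfrac12]$, which holds without any extra smallness assumption on $\delta$. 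With these corrections your computation goes through and matches the paper's.
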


\begin{proof}
We use  identities (\ref{S8}), (\ref{T2E2}) and (\ref{T4E1}) to conclude
\begin{equation}\label{low1}
\int_0^R h\left(  x,{t}\right)  dx\geq 
e^{-\beta (1{-}\rho) t}  \int h_{0}\left(  x\right)
W\left(  \frac{R-xe^{-\beta{t}}}{\left(  M{t}\right)  ^{\frac{1}{a}%
}}\right)  dx\,.
\end{equation}
We  define $H_{0}$ via $h_{0}\left(  x\right)  =H_{0}'\left(  x\right)$, $H_{0}(0)  =0$ and 
 since (\ref{B2a}) is satisfied for $t=0$ we have
\begin{equation}\label{low2}
H_{0}\left(  x\right)  \geq x^{1-\rho}\left(  1-\frac{R_{0}^{\delta}%
}{x^{\delta}}\right)  _{+}\,.
\end{equation}
Integrating by parts in \eqref{low1}, using the previous estimate for $H_0$ and the fact that $W'(Y)\geq 0$, we find 
\[
\begin{split}
\int_{0}^{R}h\left(  x,{t}\right)  dx&\geq e^{\beta\left(  \rho-1\right)
{t}}\int_{0}^{\infty}H_{0}\left(  x\right)   W^{\prime}\left(
\frac{R-xe^{-\beta{t}}}{\left(  M{t}\right)
^{\frac{1}{a}}}\right) \frac{e^{-\beta {t}}}{\left( 
Mt\right)  ^{\frac{1}{a}}}dx\\
& \geq  e^{\beta\left(  \rho-1\right)
{t}}\int_{0}^{\infty}x^{1-\rho}\left(  1-\frac{R_{0}^{\delta}%
}{x^{\delta}}\right)  _{+} W^{\prime}\left(\frac{R-xe^{-\beta{t}}%
}{\left( Mt\right)  ^{\frac{1}{a}}}\right)
\frac{e^{-\beta{t}}}{\left( M{t}\right)
^{\frac{1}{a}}}dx\,,
\end{split}
\]
which by the change of variables $xe^{-\beta{t}}=y$ turns into 
\[
\int_{0}^{R}h\left(  x,{t}\right)  dx\geq\int_{0}^{\infty}y^{1-\rho
}\left(  1-\frac{R_{0}^{\delta}e^{-\beta\delta{t}}}{y^{\delta}%
}\right)  _{+} W^{\prime}\left(  \frac{R-y}{\left( M{t}\right)  ^{\frac{1}{a}}}\right)  \frac{1}{\left( 
M{t}\right)  ^{\frac{1}{a}}}dy\,.
\]
With the further  change of
variables $Y=\frac{R-y}{\left(M{t}\right)  ^{\frac{1}{a}}%
}$ we obtain
\[
\int_{0}^{R}h\left(  x,{t}\right)  dx\geq\int_0^{\frac{R- R_{0} e^{-\beta{t}}}{\left( M{t}\right)
^{\frac{1}{a}}}}\left(  R-\left(M{t}\right)
^{\frac{1}{a}}Y\right)  ^{1-\rho} \left(  1-\frac{R_{0}^{\delta}e^{-\beta
\delta{t}}}{\left(  R-\left(  M{t}\right)
^{\frac{1}{a}}Y\right)  ^{\delta}}\right)  _{+} W^{\prime}\left(  Y\right)
dY
\]
where we use that $W^{\prime}\left(  Y\right)  =0$ if $Y\leq0$  and  
that the integrand is zero if $y\leq R_{0}e^{-\beta{t}%
},$ that is if  $Y>\frac{R-R_{0}e^{-\beta{t}}}{\left(
M{t}\right)  ^{\frac{1}{a}}}.$ Rearranging the previous inequality and setting
$A=\min \Big( \frac{R-R_0e^{-\beta  t}}{(M t)^{\frac{1}{a}}}, \frac{R}{2 (M t)^{\frac{1}{a}}}\Big)$ we find
\[
\begin{split}
R^{\rho-1} \int_{0}^{R}h(x,{t})  dx
&\geq \int_0^{A} \left(  1-\left(  M{t}\right)  ^{\frac{1}{a}} R^{-1} Y\right)  ^{1-\rho}\\
& \qquad \cdot \left(
1-  \Big ( \frac{R_0}{R}\Big)^{\delta} \Big ( \frac{e^{-\beta  t}}{1- (M t)^{\frac{1}{a}} \frac{Y}{R}}\Big)^{\delta} \right )_+
W^{\prime}\left(  Y\right) \, dY
\end{split}
\]
Note that in $[0,A]$ we have ${M t}^{\frac{1}{a}} \frac{Y}{R} \leq \frac 1 2 $, so that we  can 
 expand the nonlinear terms in $Y$ to obtain
\[
R^{\rho-1} \int_{0}^{R}h(x,{t})  dx
\geq \int_0^A \Big( 1 - \Big ( \frac{R_0}{R} \Big)^{\delta} e^{-\beta \delta  t} \Big)_+ W'(Y)\,dY
 - \frac{C { t}^{\frac{1}{a}}}{R} \int_0^A Y W'(Y)\,dY\,. 
\]
Now recall that $\int_0^{\infty} W'(Y)\,dY=1$ and $W'(Y) \sim C Y^{-(1+a)}$ as $Y \to \infty$ such that 
\[
 \int_0^A W'(Y)\,dY \geq 1 - \frac{C}{A^a} \qquad \mbox{ and } \qquad \int_0^A Y W'(Y)\,dY \leq C A^{1-a}\,.
\]
As a consequence we find 
\begin{equation}\label{low3}
 \begin{split}
  R^{\rho-1} \int_{0}^{R}h(x,{t})  dx&
\geq \Big( 1 - \Big ( \frac{R_0}{R} \Big)^{\delta} e^{-\beta \delta t} \Big)_+ \Big( 1 - \frac{C}{A^a}\Big) - \frac{C  t^{\frac{1}{a}}}{R}
A^{1-a}\\
&\geq \Big( 1 - \Big ( \frac{R_0}{R} \Big)^{\delta}\Big)_+ + \Big ( \frac{R_0}{R} \Big)^{\delta} \frac{\beta \delta  t}{2} 
-  \Big( 1 - \Big ( \frac{R_0}{R} \Big)^{\delta} e^{-\beta \delta  t} \Big) \frac{C}{A^a} 
- \frac{C  t^{\frac{1}{a}}}{R}
A^{1-a}\,.
 \end{split}
\end{equation}
If $A = \frac{R}{2  (Mt)^{\frac{1}{a}}}$, this implies
\[
 R^{\rho-1} \int_{0}^{R}h(x,{t})  dx \geq \Big( 1 - \Big ( \frac{R_0}{R} \Big)^{\delta}\Big)_+ 
+ \Big ( \frac{R_0}{R} \Big)^{\delta} \frac{\beta \delta t}{2} - \frac{C  t}{R^a}\,.
\]
On the other hand, if $A = \frac{R-R_0e^{-\beta  t}}{ (Mt)^{\frac{1}{a}}}$, we have
\[
 \begin{split}
  R^{\rho-1} \int_{0}^{R}h(x,{t})  dx &
 \geq \Big( 1 - \Big ( \frac{R_0}{R} \Big)^{\delta}\Big)_+ 
+ \Big ( \frac{R_0}{R} \Big)^{\delta} \frac{\beta \delta  t}{2} - \frac{C  t}{R^a}
 \frac{\Big(1- \Big( \frac{R_0}{R} e^{-\beta t}\Big)^{\delta}\Big)}{\Big( 1- \frac{R_0}{R} e^{-\beta t}\Big)^a}\\
& \qquad \qquad - \frac{C  t}{R^a} \Big( 1 - \frac{R_0}{R} e^{-\beta  t} \Big)^{1-a}\,.
 \end{split}
\]
Since $a \in (0,1)$ we have
\[
  \Big( 1 - \frac{R_0}{R} e^{-\beta  t} \Big)^{1-a} \leq 1 \qquad \mbox{ and } \qquad
\frac{1- \Big( \frac{R_0}{R} e^{-\beta t}\Big)^{\delta}}{\Big( 1- \frac{R_0}{R} e^{-\beta  t}\Big)^a} \leq C\,.
\]
Thus, it follows in both cases from \eqref{low3} that
\[
  R^{\rho-1} \int_{0}^{R}h(x,{t})  dx
\geq \Big( 1- \Big( \frac{R_0}{R}\Big)^{\delta} \Big)_+ +  t \Big( \Big( \frac{R_0}{R}\Big)^{\delta} \frac{1}{2} \beta \delta  
- \frac{C }{R^{a}}\Big)\,.
\]
If  $\delta < a $ and $R_0\leq R$ is sufficiently large, the second term on the right hand side is nonnegative and it 
follows that $h(\cdot, t)$ satisfies \eqref{B2a}.
\end{proof}

\section{Existence of self-similar solutions }
\subsection{Existence of a weak self-similar solution}

\begin{proposition}\label{P.main}
For any $\gamma \in (0,1)$ there exists a weak stationary solution $h \in \mathcal{Y}$ of \eqref{A2}.
\end{proposition}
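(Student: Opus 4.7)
The plan is to first obtain, for each fixed $\lambda>0$, a stationary solution $h_\lambda \in \mathcal{Y}$ of the regularized equation \eqref{A2a} via a Tykhonov-type fixed point argument, and then pass to the limit $\lambda \to 0$ using the weak compactness of $\mathcal{Y}$ together with the uniform bounds \eqref{B1a}-\eqref{B2a}.

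For the fixed point step I would endow $\mathcal{Y}$ with the weak topology of $\mathcal{M}^+([0,\infty))$. By the results of Section \ref{S.invariance}, the semigroup $S_\lambda(t)$ maps $\mathcal{Y}$ into itself, and Proposition \ref{weakcontin} gives the weak continuity of $S_\lambda(t) : \mathcal{X}_\rho \to \mathcal{X}_\rho$. Since $\mathcal{Y}$ is convex and weakly compact (as noted after \eqref{B2a}), a suitable version of Tykhonov/Schauder--Singbal (or the Markov--Kakutani theorem applied to the commuting family $\{S_\lambda(t)\}_{t \geq 0}$) yields a fixed point $h_\lambda \in \mathcal{Y}$ with $S_\lambda(t) h_\lambda = h_\lambda$ for all $t \geq 0$. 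Fixing an arbitrary test function $\psi \in C_0^1([0,\infty))$ in the weak formulation \eqref{S4} written for $h_\lambda$ and dividing by $t$ before passing $t \to 0$ then shows that $h_\lambda$ is a weak stationary solution of \eqref{A2a}, i.e.\ it satisfies
\begin{equation*}
\int_0^\infty \int_0^\infty \frac{K_\lambda(x,z)}{z} h_\lambda(z) h_\lambda(x) [\psi(x{+}z) - \psi(x)]\,dz\,dx - \beta \int x \psi'(x) h_\lambda(x)\,dx + \beta(\rho{-}1) \int \psi h_\lambda\,dx = 0
\end{equation*}
for every $\psi \in C_0^1([0,\infty))$.

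Next I would pass to the limit $\lambda \to 0$. Since $h_\lambda \in \mathcal{Y}$ for every $\lambda$, the family is uniformly bounded in $\mathcal{X}_\rho$, and by weak compactness of $\mathcal{Y}$ we may extract a subsequence $\lambda_n \to 0$ such that $h_{\lambda_n} \rightharpoonup h$ in $\mathcal{M}^+([0,\infty))$ with $h \in \mathcal{Y}$. The linear terms in the weak stationary formulation pass trivially to the limit since $x \psi'(x)$ and $\psi$ are continuous with compact support. For the nonlinear double integral I would use the pointwise convergence $K_\lambda(y,z) \to K(y,z)$ on $(0,\infty) \times (0,\infty)$, together with the uniform pointwise bound $K_\lambda \leq K \leq C(x^\gamma + y^\gamma)$. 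The key estimate is that for $\psi$ supported in $[0,L]$, the integrand
\begin{equation*}
\frac{K_\lambda(x,z)}{z} h_\lambda(z) h_\lambda(x) \big[\psi(x{+}z)-\psi(x)\big]
\end{equation*}
is supported in $\{x \leq L\}$ and controlled via \eqref{B1a} and the dyadic estimate \eqref{dyadic}, exactly as in the proof of Lemma \ref{L.WPR}. This gives an integrable envelope uniform in $\lambda$, so dominated convergence applied against the product measure $h_\lambda \otimes h_\lambda \rightharpoonup h \otimes h$ yields convergence of the nonlinear term to its analogue with $K$ and $h$.

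The main obstacle I expect is this last step: the product measure convergence must be justified on the non-compact region $(0,\infty) \times (0,\infty)$ where the integrand need not vanish as $z \to \infty$ (because $\psi(x+z)$ may become zero while $\psi(x)$ is not). I would handle it by splitting the domain into $\{z \leq R\} \cap \{x \leq L\}$, where weak-times-weak convergence of the symmetrized integrand (which is continuous in $(x,z)$ because $K$ is continuous on $(0,\infty)^2$ and the singularity at $z=0$ is cut off by the fact that $K_\lambda$ itself vanishes near the origin at the cost of a $\lambda$-dependent threshold, but at the limit the bound $C(x^\gamma+y^\gamma)/z$ combined with $\int_0^R h(z)\,dz \leq R^{1-\rho}$ and the restriction $x \leq L$ keeps things integrable) handles the bulk, and a uniform tail estimate $\int_R^\infty z^{\gamma-1} h_\lambda(z)\,dz \leq C R^{\gamma-\rho} \to 0$ from \eqref{dyadic} with $\alpha = 1-\gamma > 1-\rho$ controls the remainder uniformly in $\lambda$. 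Since the limit $h$ lies in $\mathcal{Y}$, this yields the required weak stationary solution of \eqref{A2}.
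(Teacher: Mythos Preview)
Your overall strategy matches the paper's: Tykhonov-type fixed point on the weakly compact convex set $\mathcal{Y}$ to obtain $h_\lambda$, then weak compactness to extract a limit $h\in\mathcal{Y}$, and finally passage to the limit in the weak stationary equation. The difference lies in \emph{which} form of the nonlinear term you carry through the limit.

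The paper rewrites the weak stationary identity in divergence form,
\[
\int \partial_x\psi(x)\, I[h_\lambda](x)\,dx -\beta\!\int\partial_x(x\psi)h_\lambda\,dx+\beta\rho\!\int\psi\,h_\lambda\,dx=0,
\qquad
I[g](x)=\int_0^x\!\!\int_{x-y}^\infty \frac{K(y,z)}{z}g(z)g(y)\,dz\,dy,
\]
shows $I[h]\in L^1_{\mathrm{loc}}$ directly from \eqref{B1a} and \eqref{Ass1}, and then proves $\int_0^L I_\lambda[h_\lambda]\to\int_0^L I[h]$ via the bilinear splitting $h_\lambda h_\lambda-hh=h_\lambda(h_\lambda-h)+h(h_\lambda-h)$, so that in each piece only one factor varies and ordinary weak convergence suffices. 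A side benefit is that the object $I[h]$ is exactly what is needed later in the continuity argument (Lemma~\ref{L.cont}).

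Your route through the symmetrized form $\int\!\!\int \frac{K_\lambda(x,z)}{z}[\psi(x{+}z)-\psi(x)]\,d(h_\lambda\!\otimes\! h_\lambda)$ also works, but two points in your write-up need tightening. First, the phrase ``dominated convergence applied against the product measure'' is not a theorem: both the integrand and the measure vary, so what you actually need is (i) weak convergence $h_\lambda\otimes h_\lambda\rightharpoonup h\otimes h$ against continuous test functions on compacta, plus (ii) uniform smallness of the contributions from $\{z>R\}$, $\{z<\varepsilon\}$, and $\{x<\varepsilon\}$; on the remaining compact box $K_\lambda=K$ once $\lambda$ is small and the integrand is continuous. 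Second, the reference to ``exactly as in the proof of Lemma~\ref{L.WPR}'' is misleading: those estimates exploit the $\lambda$-dependent cutoffs in $K_\lambda$ and produce constants $C_\lambda$. The $\lambda$-uniform envelope you need here comes instead from $|\psi(x{+}z)-\psi(x)|\le \|\psi'\|_\infty z$, which cancels the $1/z$, together with \eqref{Ass1} and the moment bound \eqref{B1a}; the $K_\lambda$ cutoff plays no role in the uniform control.
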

\begin{proof}
 
We have proved that the semigroup $S_{\lambda}(t)$ is weakly continuous and leaves the nonempty, convex and compact set
$\mathcal{Y} \subset \mathcal{M}^+([0,\infty))$ invariant. Then it follows by a variant of Tykonov's fixed point theorem
(see Theorem 1.2 in \cite{EMR05}), 
that there exists $h_{\lambda} \in \mathcal{Y}$ that is stationary under the action of $S_{\lambda}(t)$,
that is $h_{\lambda}$ is a stationary mild solution of \eqref{D2}. 
Due to Lemma \ref{L2} the function $h_{\lambda}$ is also a weak stationary solution of \eqref{D2} and we obtain,
 taking test functions $\psi\in C_0^1([0,\infty))$ in \eqref{S4}, that
\[
 \begin{split}
  \int\psi\left(  x\right)  dx\int_{0}^{\infty}dz\frac{K_{\lambda}\left(
x,z\right)  }{z}h_{\lambda}\left(  z\right)  h_{\lambda}\left(  x\right) =
&  \int\psi\left(  x\right)  dx\int_{0}^{x}dy\frac{K_{\lambda}\left(
y,x-y\right)  }{\left(  x-y\right)  }h_{\lambda}\left(  x-y\right)
h_{\lambda}\left(  y\right)  \\
& - \beta\int\partial_{x}\left(  x\psi\right)  h_{\lambda}\left(  x\right)
dx+\beta\rho\int\psi\left(  x\right)  h_{\lambda}\left(  x\right)  dx\,.\\
\end{split}
\]
We can rewrite this as
\[
\begin{split}
  \int \psi\left(  x\right)  &\partial_{x}\left(  \int_{0}^{x}\int
_{x-y}^{\infty}\frac{K_{\lambda}\left(  y,z\right)  }{z}h_{\lambda}\left(
z\right)  h_{\lambda}\left(  y\right) dz\,dy \right)\,dx \\
& =
-\beta\int\partial_{x}\left(  x\psi\right)  h_{\lambda}\left(  x\right)
dx+\beta\rho\int\psi\left(  x\right)  h_{\lambda}\left(  x\right)  dx
\end{split}
\]
whence
\begin{equation}\label{T4E2}
 \begin{split}
  \int \partial_x\psi\left(  x\right)  &\left(  \int_{0}^{x}\int_{x-y}^{\infty
}\frac{K_{\lambda}\left(  y,z\right)  }{z}h_{\lambda}\left(  z\right)
h_{\lambda}\left(  y\right) dz\,dy \right) \,dx\\
&  \quad -\beta\int\partial_{x}\left(  x\psi\right)  h_{\lambda}\left(  x\right)
dx+\beta\rho\int\psi\left(  x\right)  h_{\lambda}\left(  x\right)
dx   =0\,.
\end{split}
\end{equation}

Using now again the compactness of $\mathcal{Y}$ there exists $h \in \mathcal{Y}$ and a subsequence $\lambda \to 0$, such that
 $h_{\lambda}\rightharpoonup h$.
We need to show that $h$ satisfies \eqref{T4E2} with $K$ instead of $K_{\lambda}$. We can easily pass to the limit in the last two
linear terms on the right hand side. 
In order to show convergence of the nonlinear term, note that with  \eqref{dyadic} we find
\begin{equation}\label{proof1}
 \int_{x-y}^{\infty} \frac{h_{\lambda}(z)}{z} \,dz \leq C (x-y)^{-\rho}\qquad \mbox{ and } \qquad
\int_{x{-}y}^{\infty} \frac{h_{\lambda}(z)}{z^{1-\gamma}}\,dz \leq C (x-y)^{\gamma-\rho}
\end{equation}
with a constant that is independent of $\lambda$. Hence we can conclude that \eqref{proof1} is also valid for $h$.
We are going to show that $I[h] \in L^1_{loc}([0,\infty))$ where
\begin{equation}\label{Idef}
 I[h](x)= \int_0^x \int_{x-y}^{\infty} \frac{K(y,z)}{z} h(z) h(y)\,dz\,dy\,.
\end{equation}
Using \eqref{Ass1}, \eqref{proof1}
and \eqref{B1a} and taking an arbitrary $L <\infty$, we obtain
\[
 \begin{split}
  \int_0^L \int_0^x \int_{x-y}^{\infty} \frac{K(y,z)}{z} h(z) h(y)\,dz\,dy\,dx &
\leq  C \int_0^L \int_0^x h(y) \Big( \frac{y^{\gamma}}{(x{-}y)^{\rho}} + \frac{1}{(x{-}y)^{\rho-\gamma}}\Big) \,dy \,dx \\
& \leq C(L) \int_0^L h(y) \int_y^L \frac{1}{(x{-}y)^{\rho}} \,dx \,dy \\
&\leq C(L) \int_0^L h(y)\,dy \leq C(L)\,
 \end{split}
\]
and similarly one finds $\int_{B} I[h](x)\,dx \to 0$ if $|B| \to 0$ which proves the claim.

Next, we need to show that 
\[
(*):= \Big | \int_0^L \int_0^x \int_{x-y}^{\infty} \frac{K_{\lambda}(y,z)}{z} h_{\lambda}(z)h_{\lambda}(y) \,dz\,dy\,dx
\,-\, \int_0^L \int_0^x \int_{x-y}^{\infty} \frac{K(y,z)}{z} h(z)h(y) \,dz\,dy\,dx \Big | \to 0
\]
as $\lambda \to 0$ for any finite $L>0$. 
Now
\begin{equation}\label{proof2}
 \begin{split}
  (*)&\leq \int_0^L \int_0^x \int_{x-y}^{\infty} \Big| \frac{K_{\lambda}(y,z)}{z} - \frac{K(y,z)}{z}\Big| h(z)h(y)\,dz\,dy\,dx\\
& \quad + \Big | \int_0^L \int_0^x \int_{x-y}^{\infty}  \frac{K_{\lambda}(y,z)}{z} \big[ h_{\lambda}(z)
(h_{\lambda}(y) - h(y)) + h(y) (h_{\lambda}(z)-h(z)\big]\,dz\,dy\,dx\Big |\,
 \end{split}
\end{equation}
and, using \eqref{Ass1}, \eqref{D1} and \eqref{proof1}, we find
\[
 \begin{split}
  \int_0^L & \int_0^x \int_{x-y}^{\infty} \Big| \frac{K_{\lambda}(y,z)}{z} - \frac{K(y,z)}{z}\Big| h(z)h(y)\,dz\,dy\,dx\\
& \leq C\int_0^L \int_0^{\min(\lambda,x)} h(y) \big(  y^{\gamma} (x{-}y)^{-\rho} + (x{-}y)^{\gamma-\rho}\big) \,dy\,dx \\
& \leq  C\int_0^{\lambda} h(y)  y^{\gamma} \int_y^{L} (x-y)^{-\rho}\,dx\,dy + C \int_0^{\lambda} h(y) \int_y^L (x{-}y)^{\gamma-\rho}
\,dx\,dy \\
& \leq C(L)  \int_0^{\lambda}h(y)\,dy \\
& \leq C(L) \lambda^{1-\rho} \to 0 \quad \mbox{ as } \lambda \to 0. 
 \end{split}
\]
Using the weak convergence of $h_{\lambda}$ as well as the same bounds and the fact that $\gamma<\rho$,
 we can argue analogously to conclude that also the second
term in \eqref{proof2} converges to zero as $\lambda \to 0$. This implies that $h$ satisfies \eqref{T4E2} with $K_{\lambda}$
 replaced by $K$ and thus it is a weak self-similar solution.
\end{proof}

\subsection{Continuity of self-similar solutions}

\begin{lemma}\label{L.cont}
 The solution $h\in \mathcal{Y}$ from Proposition \ref{P.main} is continuous on $(0,\infty)$.
\end{lemma}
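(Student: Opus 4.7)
My plan is to use the weak stationary equation to exhibit $h$ as an absolutely continuous measure on $(0,\infty)$ whose density is a continuous function.

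First, since $h$ is a weak stationary solution of \eqref{A2}, identity \eqref{T4E2} (with $K_{\lambda}$ replaced by $K$), upon expanding $\partial_{x}(x\psi) = \psi + x\partial_{x}\psi$, is equivalent to the distributional identity
\[
\partial_{x} I[h] - \beta\,\partial_{x}(xh) - \beta(\rho-1)\, h = 0 \qquad \text{in } \mathcal{D}'((0,\infty)).
\]
Since $H(x) = h([0,x])$ is a distributional antiderivative of $h$, and since any distribution on the connected interval $(0,\infty)$ whose derivative vanishes is constant, one antidifferentiation in $\mathcal{D}'((0,\infty))$ yields
\[
I[h] - \beta\,xh - \beta(\rho-1)\,H = C_{0}
\qquad \text{in } \mathcal{D}'((0,\infty))
\]
for some $C_{0}\in\R$. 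Here $I[h]$ and $H$ are locally integrable functions (local integrability of $I[h]$ follows from the computations in the proof of Proposition \ref{P.main}), whereas $xh$ is a priori only a Radon measure.

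The crucial observation is then to read this identity as
\[
\beta\,xh = F \qquad \text{in } \mathcal{D}'((0,\infty)), \qquad F := I[h] + \beta(1-\rho)\,H - C_{0},
\]
where the right-hand side is a regular distribution (the absolutely continuous measure $F\,dx$). Since the left-hand side is the measure $\beta xh$, equality as distributions is equality as measures, giving $xh = F\,dx/\beta$. Dividing by $x>0$ on $(0,\infty)$ forces $h$ itself to be absolutely continuous on $(0,\infty)$, with a density that a.e.\ equals
\[
h(x) = \frac{I[h](x) + \beta(1-\rho)\, H(x) - C_{0}}{\beta\,x}.
\]
In particular $h$ carries no atoms on $(0,\infty)$, so $H$ is continuous there.

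It then remains to show $I[h]$ is continuous on $(0,\infty)$, for then $F$ is continuous and one may take $h$ to be the continuous representative $F(x)/(\beta x)$. Writing
\[
I[h](x) = \iint_{[0,\infty)^{2}} \chi_{\{y\leq x\leq y+z\}}\, \frac{K(y,z)}{z}\, dh(y)\, dh(z),
\]
the indicator converges pointwise as $x\to x_{0}>0$ off the set $\{y = x_{0}\}\cup\{y+z = x_{0}\}$, which is $(h\otimes h)$-null since $h$ has no atoms. Applying dominated convergence on a neighborhood of $x_{0}$, with dominating integrand arising from the already-established local integrability of $I[h]$, yields $I[h](x)\to I[h](x_{0})$ and closes the bootstrap.

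The hard part is the rearrangement step: matching the measure $xh$ to a regular distribution is what converts the weak PDE into the genuine structural statement that $xh$ is absolutely continuous, and this is what unlocks all the subsequent regularity. Notably, the value of $C_{0}$ never needs to be pinned down for the continuity conclusion.
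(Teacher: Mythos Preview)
Your argument is correct and attractive up through the point where you show that $xh$ equals a locally integrable function in $\mathcal{D}'((0,\infty))$, hence $h$ is absolutely continuous on $(0,\infty)$ with an $L^1_{\mathrm{loc}}$ density and no atoms. That part is in fact cleaner than the paper's opening move.

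The gap is in the final step, the claimed continuity of $I[h]$ via dominated convergence. For $x$ ranging over a neighborhood $(x_0-\eps,x_0+\eps)$, the union of the regions $\{y\le x,\ z\ge x-y\}$ is $\{y<x_0+\eps,\ y+z>x_0-\eps\}$, so any dominating integrand must control
\[
\int_{x_0-\eps}^{x_0+\eps} h(y)\left(\int_0^\infty \frac{K(y,z)}{z}\,h(z)\,dz\right)dy .
\]
But the inner integral need not be finite: the assumptions on $K$ and the membership $h\in\mathcal{Y}$ allow $h(z)\sim c\,z^{-\rho}$ as $z\to 0$ (the paper notes this explicitly in Remark~\ref{R.remark2} for $K\equiv 0$), and then already $\int_0^\delta z^{-1}h(z)\,dz=\infty$. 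So the natural majorant is not $(h\otimes h)$-integrable, and ``local integrability of $I[h]$'' --- a statement about $\int I[h](x)\,dx$ --- does not supply one. At this stage you have not even shown that $I[h](x_0)$ is finite for \emph{every} $x_0>0$, only for a.e.\ $x_0$.

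What is missing is exactly the intermediate regularity the paper establishes: a bootstrap (via a localized Young inequality for the convolution $\int_0^x h(y)(x-y)^{-\rho}\,dy$) that upgrades $h\in L^1_{\mathrm{loc}}(0,\infty)$ to $h\in L^\infty_{\mathrm{loc}}(0,\infty)$. Once $h$ is locally bounded, the direct estimate of $|I[h](x_2)-I[h](x_1)|$ goes through, but some quantitative control beyond ``$L^1$ with no atoms'' is genuinely needed to close the argument.
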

\begin{proof}
Recall that $h$ solves equation \eqref{T4E2} with $K$ instead of $K_{\lambda}$, that is it satisfies
\begin{equation}\label{eq1}
 \partial_x \Big ( I[h] - \beta x h\Big) = \beta (1{-}\rho) h \qquad \mbox{ in } \mathcal{D}'
\end{equation}
 with $I[h]$ as in \eqref{Idef}.
We have already seen in the Proof  of Proposition \ref{P.main} that  $I[h] \in L^1_{loc}([0,\infty))$. 
Then it follows from equation \eqref{eq1}, that also  $xh \in L^1_{loc}[0,\infty)$ and $I[h] - \beta x h$ is a function of bounded 
variation on any compact subset of $[0,\infty)$. Consequently, $I[h] - \beta x h \in L^{\infty}_{loc}[0,\infty)$.

Next, we are going to show that $I[h]$ is locally bounded on $(0,\infty)$. As a consequence of equation
\eqref{eq1} then  also $h$ is locally bounded on $(0,\infty)$.

We are going to use a variant of Young's inequality for convolutions. In its simplest form it says that
if $h \in L^q(\R)$ and $g \in L^p(\R)$, then $h\star g \in L^r(\R)$ with $\frac{1}{q} + \frac{1}{p} = 1 + \frac{1}{r}$
and
\begin{equation}\label{young1}
 \|f \star g\|_{L^r} \leq \| f\|_{L^q} \| g\|_{L^p}\,.
\end{equation}
Examining the proof of \eqref{young1} (see e.g. \cite{LiebLoss01}, pp. 92), we find that it can be easily adapted to show for
\[
 F(x) = \int_0^x h(y) g(x{-}y)\,dy \qquad \mbox{ and } \qquad G(x)= \int_{x/2}^x h(y) g(x{-}y)\,dy
\]
that for $0<L<\infty$
\begin{equation}\label{young2}
 \| F\|_{L^r[0,L]} \leq \| h\|_{L^q[0,L]} \| g\|_{L^p[0,L]}
\end{equation}
and for $0<a<L<\infty$
\begin{equation}\label{young3}
 \| G\|_{L^r[a,L]} \leq \|h\|_{L^q[a/2,L]} \|g\|_{L^p[0,L]}\,.
\end{equation}
For the convenience of the reader we prove \eqref{young3}. To that aim we are going to show that for any $f \in L^{r'}[a,L]$, where
$r'$ is the dual exponent to $r$, we have
\begin{equation}\label{young4}
 \|fG\|_{L^1[a,L]} \leq \|f\|_{L^{r'}[a,L]} \|h\|_{L^q[a/2,L]} \|g\|_{L^p[0,L]}\,,
\end{equation}
from which \eqref{young3} follows by duality.

We define
\[
 \begin{split}
  \alpha(x,y)&= |f(x)|^{r'/q'} |g(x{-}y)|^{p/q'}\\
\beta(x,y)&= |g(x{-}y)|^{p/r} |h(y)|^{q/r}\\
\gamma(x,y)&= |f(x)|^{r'/p'} |h(y)|^{q/p'}
 \end{split}
\]
and note that $\frac{1}{q'}+ \frac{1}{p'}+\frac{1}{r}=1$. Then we can use H\"older's inequality on $\Omega:=
\{ (x,y) \,|\, a \leq x \leq L, x/2 \leq y \leq x\}$, to find
\[
 \|fG\|_{L^1[a,L]} = \int_{\Omega} \alpha \beta \gamma \,dy\,dx \leq \| \alpha \|_{L^{q'}(\Omega)} \| \beta \|_{L^{r}(\Omega)}
\|\gamma\|_{L^{p'}(\Omega)}\,.
\]
Now
\[
 \begin{split}
  \| \alpha \|_{L^{q'}(\Omega)}^{q'}&= \int_a^L \int_{x/2}^x |f(x)|^{r'} |g(x{-}y)|^p\,dy\,dx\\
&= \int_a^L |f(x)|^{r'} \int_0^{x/2} |g(y)|^p\,dy\,dx \leq \|f\|_{L^{r'}[a,L]}^{r'} \|g\|_{L^p[0,L]}^p\,,
 \end{split}
\]
\[
\begin{split}
 \|\beta\|_{L^r(\Omega)}^r &= \int_a^L\int_{x/2}^x |h(y)|^q |g(x{-}y)|^p\,dy\,dx\\
& \leq \int_a^L \int_{a/2}^x |h(y)|^q |g(x{-}y)|^p\,dy\,dx\\
&=\int_{a/2}^L |h(y)|^q \int_y^L |g(x{-}y)|^p\,dx \,dy \leq \|h\|_{L^q[a/2,L]}^q \|g\|_{L^p[0,L]}^p
\end{split}
\]
and
\[
 \|\gamma\|_{L^{p'}(\Omega)} \leq \|f\|_{^{r'}[a,L]}^{r'} \|h\|_{L^q[a/2,L]}^q\,.
\]
Hence, in summary we find
\[
 \|fG\|_{L^1[a,L]} \leq \|f\|_{L^{r'}[a,L]}^{\frac{r'}{q'} + \frac{r'}{p'}}
\|g\|_{L^p[0,L]}^{\frac{p}{q'}+\frac{p}{r}} \|h\|_{L^q[a/2,L]}^{\frac{q}{r}+\frac{q}{p'}}
\]
and since all the exponents are equal to $1$, this proves \eqref{young4}.

We apply now \eqref{young2} with $p<1/\rho$. Then, since $I[h](x) \leq C(L) F(x)$ on $[0,L]$ it follows 
that $I[h] \in L^p[0,L]$ and consequently $xh \in  L^p[0,L]$ and 
$h \in L^p_{loc}(0,\infty)$. 

In the next step we want to iterate this procedure. However, since $h$ can be singular at $x=0$ (in fact, $h(x) =(1{-}\rho) x^{-\rho}$ for 
$K\equiv 0$), we now have to restrict ourselves to compact subsets of $(0,\infty)$. In the following we  consider $[a,L]$
with $a>0$ and $a<L<\infty$, but otherwise arbitrary. Then, using \eqref{young3}, we find
\[
 \begin{split}
  \int_a^L |I[h](x)|^r \,dx & \leq C(L) \int_a^L \Big | \int_0^x \frac{h(y)}{(x{-}y)^{\rho}} \,dy \Big |^r \,dx\\
&\leq C \Big \{ \int_a^L \Big | \int_{x/2}^x \frac{h(y)}{(x{-}y)^{\rho}} \,dy\Big |^r \,dx + \int_a^L \Big | 
\int_0^{x/2} \frac{h(y)}{(x{-}y)^{\rho}} \,dy\Big|^r \,dx \Big\}\\
&\leq C(L) \Big\{ \| h\|_{L^q[\frac{a}{2},L]}^r\| x^{-\rho}\|_{L^p[0,L]}^r + \int_a^L \Big ( \frac{2}{x}\Big)^{\rho r} 
\Big( \frac{x}{2}\Big)^{r(1-\rho)}\,dx \Big\}\\
& \leq C(a,L) \Big( 1+ \| h\|_{L^q[\frac{a}{2},L]}^r\| x^{-\rho}\|_{L^p[0,L]}^r\Big)\,.
 \end{split}
\]

In the first step we take $q=p$ with $p \rho <1$ as above. Then we find $I[h] \in L^r_{loc} (0,L)$ with $
\frac{1}{r} = \frac{2}{p} -1$. Consequently, also $h  \in L^r_{loc} (0,L)$, and we can iterate the procedure until after a finite number
of steps we can take $r=\infty$. This proves that $I[h]$ and $h$ are locally bounded on $(0,\infty)$.

It remains to show that $h$ is continuous. To that aim we are going to show that $I[h]$ is continuous from which the claim follows.
Let $0<x_1<x_2<\infty$. Then
\[
 \begin{split}
  \Big | I[h](x_2)-I[h](x_1)\Big | & \leq 
\Big | \int_{x_1}^{x_2} \int_{x_2{-}y}^{\infty} \frac{K(y,z)}{z} h(z) h(y)\,dz\,dy \Big|
\\
& \qquad + \Big | \int_0^{x_1} \int_{x_1{-}y}^{x_2{-}y} \frac{K(y,z)}{z} h(z)h(y)\,dz\,dy\Big|\\
& \leq C \Big | \int_{x_1}^{x_2} \frac{1}{(x_2{-}y)^{\rho}} \,dy \Big |
+ C \int_0^{x_1} \int_{x_1-y}^{x_2-y} \frac{h(z)}{z}\,dz\,dy\,.
 \end{split}
\]

The first term on the right hand side clearly converges to zero as $x_1 \to x_2$. For the second term  we can estimate
\[
\begin{split}
  \int_0^{x_1} \int_{x_1-y}^{x_2-y} \frac{h(z)}{z}\,dz\,dy &= \int_0^{x_1} \int_{x_1}^{x_2} \frac{h(z{-}y)}{z{-}y} \,dz \,dy
 =
\int_{x_1}^{x_2} \int_0^{x_1} \frac{h(z{-}y)}{z{-}y} \,dy\,dz
\\ &  \leq C \int_{x_1}^{x_2} \frac{1}{(z{-}x_1)^{\rho}}\,dz =C \frac{1}{1{-}\rho} (x_2-x_1)^{1{-}\rho} \to 0 
\end{split}
\]
 as $x_1\to x_2$. This finishes the proof of the Lemma.

\end{proof}

\subsection{Decay behaviour}

\begin{lemma}\label{h.tail}
 The solution $h\in \mathcal{Y}$ from Proposition \ref{P.main} satifies  $h(x) \sim (1{-}\rho) x^{-\rho}$ as $x \to \infty$.
\end{lemma}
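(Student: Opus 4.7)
Plan. The approach is to integrate the stationary equation once to obtain an algebraic identity for $h(R)$ in terms of $I[h](R)$ and $N(R):=\int_0^R h$, then combine the $\mathcal{Y}$-bounds on $N$ with a careful estimate of the flux $I[h]$ to extract the pointwise asymptotic.

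From the weak stationary formulation, Proposition \ref{P.main} gives $\partial_x[I[h]-\beta x h] = \beta(\rho-1)h$ in $\mathcal{D}'$, and since $h$ is continuous on $(0,\infty)$ by Lemma \ref{L.cont} and $I[h]$ is locally integrable on $[0,\infty)$, integrating from any fixed $a>0$ to $R$ yields
\[
\beta R h(R) = I[h](R) + \beta(1-\rho)N(R) + C_a,
\]
with $C_a$ independent of $R$. The $\mathcal{Y}$-bounds \eqref{B1a}--\eqref{B2a} give $N(R) = R^{1-\rho}(1+o(1))$ as $R\to\infty$, so dividing by $R$ reduces the claim to showing $I[h](R) = o(R^{1-\rho})$.

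For the flux estimate, the growth condition \eqref{Ass1} and the dyadic bound \eqref{dyadic} yield
\[
I[h](R) \leq C\int_0^R h(y)\bigl[y^\gamma(R-y)^{-\rho} + (R-y)^{\gamma-\rho}\bigr]\,dy,
\]
which I would split at $y=R/2$. On $(0,R/2)$ the singular factors are controlled by $R^{-\rho}$ and $R^{\gamma-\rho}$ respectively, and an integration by parts using only the upper bound $N(y)\leq y^{1-\rho}$ gives $\int_0^{R/2} h(y) y^\gamma\,dy \leq C R^{1+\gamma-\rho}$, so the contribution is $O(R^{1+\gamma-2\rho}) = o(R^{1-\rho})$ since $\gamma<\rho$. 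On $(R/2,R)$ I would dyadically decompose into shells $J_k = \{y : R - R \cdot 2^{-k+1} < y < R - R \cdot 2^{-k}\}$, where combining the upper bound on $N$ at the right endpoint with the lower bound \eqref{B2a} at the left endpoint yields $\int_{J_k} h \leq C 2^{-k}R^{1-\rho} + C R_0^\delta R^{1-\rho-\delta}$. Weighted by $(R-y)^{-\rho} \leq C 2^{k\rho} R^{-\rho}$, the geometric series in $k$ converges because $\rho<1$, producing a total contribution of order $R^{1+\gamma-2\rho} + R_0^\delta R^{1+\gamma-\rho-\delta}$, both $o(R^{1-\rho})$ for the admissible choice of $\delta$ from Proposition \ref{P.lowerbound}. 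Combining, $h(R) = (1-\rho)R^{-\rho}(1+o(1))$.

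Main obstacle. The crux is the flux estimate: the singular weight $(R-y)^{-\rho}$ at $y=R$ must be tamed using only integrated bounds on $h$. The dyadic argument, which balances the shell mass against the weight growth (allowed only because $\rho<1$), is essentially forced by the lack of any pointwise bound on $h$. Care is also needed so that the slack $R_0^\delta R^{-\delta}$ in the lower bound \eqref{B2a} does not spoil the decay of $I[h](R)/R^{1-\rho}$, which amounts to a compatibility constraint between the exponents $\delta$ and $\gamma$.
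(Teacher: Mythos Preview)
Your overall strategy --- integrate the equation once to isolate $h(R)$, then show $I[h](R)=o(R^{1-\rho})$ --- is exactly the paper's, and your treatment of the contribution from $(0,R/2)$ is fine. The gap is in the dyadic argument on $(R/2,R)$. You bound the mass of each shell $J_k$ by $C2^{-k}R^{1-\rho}+CR_0^\delta R^{1-\rho-\delta}$ and then weight by $(R-y)^{-\rho}\le C\,2^{k\rho}R^{-\rho}$. The first piece produces the convergent sum $\sum_k 2^{-k(1-\rho)}$, but the second produces $\sum_k 2^{k\rho}$, which \emph{diverges}. The slack $R_0^\delta R^{1-\rho-\delta}$ from \eqref{B2a} is a $k$-independent additive error in the mass of every shell; against the growing weight $2^{k\rho}$ it cannot be summed, and no compatibility between $\delta$ and $\gamma$ changes this. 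In fact the $\mathcal{Y}$-bounds alone are compatible with $\int_{R-s}^R h \ge c\,R^{1-\rho-\delta}$ for all small $s>0$, in which case $\int_{R/2}^R h(y)(R-y)^{-\rho}\,dy$ need not even be finite. So the flux on $(R/2,R)$ cannot be controlled from the integrated bounds alone.

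What is missing is a second use of the equation. The paper takes the pointwise identity $\beta x h(x)=I[h](x)+\beta(1-\rho)N(x)$ at a variable point $x$, divides by $x(R-x)^\rho$, and integrates over $(R/2,R)$. This expresses the problematic quantity $\int_{R/2}^R h(x)(R-x)^{-\rho}\,dx$ as the sum of two pieces: one involving $N(x)/x\le x^{-\rho}$, which integrates to $CR^{1-2\rho}$, and one involving $I[h](x)$, which after Fubini and a Beta-type integral $\int_y^R(R-x)^{-\rho}(x-y)^{-\rho}\,dx\le CR^{1-2\rho}$ gains an extra factor $R^{\gamma-\rho}$ and contributes $CR^{1+\gamma-3\rho}$. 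The point is that going back to the equation replaces the pointwise $h$ (which you cannot control from $\mathcal{Y}$) by $N$ plus flux, and the double weight $(R-x)^{-\rho}(x-y)^{-\rho}$ is integrable in $x$. Inserting the resulting bound $\int_{R/2}^R h(x)(R-x)^{-\rho}\,dx\le CR^{1-2\rho}$ back into your estimate for $I[h](R)$ gives $I[h](R)\le CR^{1+\gamma-2\rho}=o(R^{1-\rho})$, after which the rest of your conclusion goes through.
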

\begin{proof} 
Taking (after some approximation procedure) 
 in \eqref{T4E2} with $K$ instead of $K_{\lambda}$ as test function the characteristic function in the interval $[0,R]$  
we obtain
\begin{equation}\label{decay0}
 \int_0^R \int_{R-y}^{\infty} \frac{K(y,z)}{z} h(z)h(y)\,dz\,dy - \beta R h(R) - \beta (\rho{-}1) \int_0^R h\,dx=0.
\end{equation}
Using, as in the proof of Proposition \ref{P.main}, assumption \eqref{Ass1}, \eqref{B1a}  as well as \eqref{proof1}, we find
\begin{equation}\label{decay1}
\begin{split}
 \int_0^R \int_{R-y}^{\infty} \frac{K(y,z)}{z} h(z)h(y)\,dz\,dy &
\leq C R^{\gamma} \int_0^R \frac{h(y)}{(R-y)^{\rho}} \,dy\\
&= C R^{\gamma} \Big( \int_0^{R/2} \frac{h(y)}{(R-y)^{\rho}} \,dy  + \int_{R/2}^R \frac{h(y)}{(R-y)^{\rho}} \,dy \Big)\,.
\end{split}
\end{equation}
We first observe, using \eqref{B1a}, that 
\[
   \int_0^{R/2} \frac{h(y)}{(R-y)^{\rho}}\,dy \leq C R^{-\rho} \int_0^{R/2} h(y)\,dy
\leq C R^{1-2\rho}\,.
\]
Next, we denote the variable in \eqref{decay0} as $x$, divide by $\beta x(R-x)^{\rho}$ and integrate from $R/2$ to $R$.
This gives
\begin{equation}\label{decay2}
 \begin{split}
  \int_{R/2}^R \frac{h(x)}{(R-x)^{\rho}} \,dx &
\leq (1{-}\rho) \int_{R/2}^R \frac{1}{x (R-x)^{\rho}} \int_0^x h(y)\,dy \\
& \quad + C \int_{R/2}^R \frac{x^{\gamma}}{x (R-x)^{\rho}} \int_0^x  \frac{h(y)}{(x-y)^{\rho}} \,dy=: (I) + (II)\,.
 \end{split}
\end{equation}
The first term of the right-hand side of \eqref{decay2} is easily estimated, using \eqref{B1a} and $\rho<1$,  as
\[
 (I) \leq (1{-}\rho) \int_{R/2}^R \frac{1}{x^{\rho} (R-x)^{\rho}} \,dx = (1{-}\rho) R^{1-2\rho} \int_{1/2}^1 \frac{1}{t^{\rho} 
(1{-}t)^{\rho}}\,dt  \leq C R^{1-2\rho}\,.
\]
Furthermore, exchanging the order of integration, we find
\[
 \begin{split}
 (II) &  \leq \frac{C}{R} \int_{R/2}^R \frac{x^{\gamma}}{(R-x)^{\rho}} \int_0^x \frac{h(y)}{(x-y)^{\rho}}\,dy\,dx\\
&\leq C R^{\gamma-1} \Big( \int_0^{R/2} h(y) \int_{R/2}^R \frac{1}{(R-x)^{\rho} (x-y)^{\rho}}\,dx\,dy \\
&\qquad \quad +
\int_{R/2}^R h(y) \int_y^R \frac{1}{(R-x)^{\rho} (x-y)^{\rho}}\,dx\,dy\Big)\,.
 \end{split}
\]
Since we have for $y\in (0,R/2)$ that  
\[
 \int_{R/2}^R \frac{1}{(R-x)^{\rho} (x-y)^{\rho}}\,dx = R^{1-2\rho} \int_{1/2}^1 \frac{1}{(1-t)^{\rho}
(t-y/R)^{\rho}} \,dt \leq C R^{1-2\rho} 
\]
and
\[
 \int_y^R \frac{1}{(R-x)^{\rho} (x-y)^{\rho}}\,dx = R^{1-2\rho} \int_{y/R}^1 \frac{1}{(1-t)^{\rho}
(t-y/R)^{\rho}}\,dt \leq C R^{1-2\rho}\,,
\]
we find, using again \eqref{B1a}, that 
\[
 (II)\leq C R^{-2\rho+\gamma}  \int_0^{R} h(y)\,dy \leq C R^{1-3\rho+\gamma}\,.
\]
In summary, since $\rho>\gamma$ we obtain
for $R>1$ that 
\begin{equation}\label{decay3}
 \int_{R/2}^R \frac{h(x)}{(R-x)^{\rho}}\,dx \leq C\big( R^{1-2\rho}+ R^{1-3\rho+\gamma}\big) \leq C R^{1-2\rho}\,, 
\end{equation}

so that in total we deduce from \eqref{decay1} that
\[
 \int_0^R \int_{R-y}^{\infty} \frac{K(y,z)}{z} h(z)h(y)\,dz\,dy \leq C R^{1-2\rho+\gamma}\,.
\]
Thus, \eqref{decay0} as well as  property \eqref{B2a}, imply that 
\[
\begin{split}
 \Big | \frac{h(R)}{\frac{1{-}\rho}{R} \int_0^R h(x)\,dx } - 1 \Big | &\leq C \frac{R^{-\rho-(\rho{-}\gamma)} }{
 \frac{(1{-}\rho) }{R}\int_0^R h(x)\,dx}\\
& \leq C \frac{R^{-(\rho{-}\gamma)}}{ \Big( 1 - \Big (\frac{R_0}{R}\Big)^{\delta} \Big)_+ } \to 0
\end{split}
\]
as $R \to \infty$. In particular this implies that 
\[
 \Big | \frac{h(R)}{\frac{1{-}\rho}{R} \int_0^R h(x)\,dx }\Big| \leq 2
\]
for sufficiently large $R$. Hence, with some $\omega(R) \to 0$ as $R \to \infty$, we have, using \eqref{B2a}, that
\[
 \begin{split}
  \Big | \frac{h(R)}{(1{-}\rho) R^{-\rho}} - 1\Big| & 
\leq \Big | \frac{h(R)}{(1{-}\rho) R^{-\rho}} - \frac{h(R)}{\frac{1{-}\rho}{R} \int_0^R h(x)\,dx }\Big | + \omega(R)\\
& \leq \frac{h(R)}{(1{-}\rho) R^{-(1{+}\rho)} \int_0^R h(x)\,dx}
\Big( R^{-\rho} - \frac{1}{R} \int_0^R h(x)\,dx \Big) + \omega(R)\\
& \leq \frac{h(R)}{(1{-}\rho) \frac{1}{R} \int_0^R h(x)\,dx} \Big( \frac{R_0}{R}\Big)^{\delta} + \omega(R) \\
& \to 0 \qquad \mbox{ as } R \to \infty\,,
 \end{split}
\]
which finishes the proof of the Lemma.

\end{proof}

\bigskip
{\bf Acknowledgments:} This work was supported by the EPSRC Science and Innovation award to the
Oxford Centre for Nonlinear PDE (EP/E035027/1) and the Hausdorff Center for Mathematics at the University
of Bonn.

\bibliographystyle{amsplain}%
\bibliography{../../coagulation}%

\end{document}